\tikzstyle{startstop} = [rectangle, rounded corners, minimum width=3cm, minimum height=1cm,text centered, draw=black, fill=red!0]
\tikzstyle{io} = [trapezium, trapezium left angle=70, trapezium right angle=110, minimum width=3cm, minimum height=1cm, text centered, draw=black, fill=blue!0]
\tikzstyle{process} = [rectangle, minimum width=3cm, minimum height=1cm, text centered, draw=black, fill=orange!0]
\tikzstyle{decision} = [diamond, minimum width=3cm, minimum height=1cm, text centered, draw=black, fill=green!0]
\tikzstyle{arrow} = [thick,->,>=stealth]  
\newtheorem{thm}{Theorem}[section]
\theoremstyle{definition}
\newtheorem{defn}[thm]{Definition}
\newtheorem{Thm}[thm]{Theorem}
\newtheorem{THM}{Theorem}
\newtheorem{prop}[thm]{Proposition}
\newtheorem{lem}[thm]{Lemma}
\newtheorem{cor}[thm]{Corollary}
\theoremstyle{remark}
\newtheorem{rem}[thm]{Remark}
\title{The $m$-step Solvable Mono-anabelian Geometry of Number Fields}
\author{
  Yu Mao\\
  \texttt{ym382@exeter.ac.uk}
  \and
  Mohamed Sa\"idi\\
  \texttt{m.saidi@exeter.ac.uk}
}
\date{}
\begin{document}
\maketitle
\begin{abstract}
The goal of this paper is to develop a group-theoretic algorithm, to reconstruct a number field (together with its maximal $m$-step solvable extension for some positive integer $m \geq 3$) from the maximal $m+9$-step solvable quotient of its absolute Galois group. If $K$ is an imaginary quadratic field or $\mathbb{Q}$, we establish a group-theoretic reconstruction algorithm of $K$ from the maximal $6$-step solvable quotient of its absolute Galois group.
\end{abstract}
\newpage
\tableofcontents
\newpage
\section*{Acknowledgement}
The authors would like to thank Yuichiro Hoshi and Akio Tamagawa for valuable discussions.
\section*{Conventions}
Throughout this paper, we use the following conventions. \par 
\smallskip
\textbf{Sets}
\begin{itemize}
\item We write $\mathbb{N}$, $\mathbb{Z}$, $\mathbb{Q}$, $\mathbb{R}$, $\mathbb{C}$, $\mathbb{Z}_p$, $\mathbb{Q}_p$ (for some prime number $p$) for the set of natural numbers, integers, rational numbers, real numbers, complex numbers, $p$-adic integers and $p$-adic numbers respectively. We will clarify when we use additional structures (e.g. group structure, ring structure, etc.) on the above sets if necessary.
\item We write $\mathbb{Z}_{\geq r}$ for the set of integers greater than or equal to an integer $r$. 
\end{itemize}
\textbf{Monoids}
\begin{itemize}
\item Let $M$ be a commutative multiplicative monoid. We write 
$$
M_{\times} := M \cup \{0_M\}
$$ 
for a commutative monoid containing $M$ by setting $0_M \times a := 0_M$ for all $a \in M$ and $0_M \times 0_M := 0_M$.
\end{itemize}
\textbf{Groups}
\begin{itemize}
\item Let $G$ be a profinite group, we write $G^{\text{ab}}$ for the (topological) abelianisation of $G$. Thus, $G^{\text{ab}} := G/\overline{[G,G]}$, where $\overline{[G,G]}$ denotes the closure of the commutator subgroup of $G$.
\item All homomorphisms of profinite groups are assumed to be continuous.
\item Let $G$ be an abelian group, we write $G_{\text{tor}}$ for the torsion subgroup of $G$. If $G$ is an abelian profinite group, then $G_{\text{tor}}$ is defined to be the closure of the subgroup consisting of all torsion elements. We write $G^{/\text{tor}}$ for the quotient $G/G_{\text{tor}}$.
\item Let $G$ be a profinite group, we write $G^{\text{ab/tor}}$ for the quotient $G^{\text{ab}}/(G^{\text{ab}})_{\text{tor}}$.
\item Let $G$ be a group, we write $\widehat{G}$ for the profinite completion of $G$. Thus,
$$
\widehat{G} := \varprojlim_{N} G/N
$$
where $N$ ranges over all normal subgroup of $G$ of finite index. Moreover, if $G$ is an abelian (topological) group, we define
$$
G^{\wedge} := \varprojlim_{n \geq 1} G/nG.
$$
Notice that $\widehat{G} = G^{\wedge}$ if $G$ is (topologically) finitely generated (in this case the subgroups $\{nG\}_{n \geq 1}$ form a cofinal system of subgroups of finite index).
\item Let $G$ be a profinite group. We define the closed subgroups $G^{[i]}$ of $G$ as follows:  $G^{[0]} := G$ and $G^{[i]} := \overline{[G^{[i-1]},G^{[i-1]}]}$ for $i \geq 1$ ($G^{[i]}$ is known as the $i$-th derived subgroup of $G$). We define $G^i := G/G^{[i]}$ the maximal $i$-step solvable quotient of $G$. By definition we have $G^1 = G^{\text{ab}}$. Moreover, for all $1 \leq i \leq j$, we write 
$$
G^{[j,i]} := \text{ker}(G^j \twoheadrightarrow G^i) = (G^j)^{[i]} = (G^{[i]})^{j-i}.
$$
\item Let $G$ be an abelian group and $n\geq 1$ an integer, we write $G[n]$ for the subgroup of $G$ consisting of all $n$-torsion elements for $n \geq 1$, i.e. $G[n] = \text{ker}(G \xrightarrow{\times n} G)$.
\item Let $G$ be a profinite group, and let $p$ be a prime number. We write $G^{(p)}$ for the maximal pro-$p$ quotient of $G$. Moreover, we write $G^{(p')}$ for the maximal prime-to-$p$ quotient of $G$. Thus, $G^{(p')} := \varprojlim_N G/N$, where $N$ ranges over all open normal subgroups of $G$ such that $[G:N]$ is not divisible by $p$.
\item Let $G$ be a profinite group, we write $G^{\text{sol}}$ for the maximal prosolvable quotient of $G$.
\end{itemize}
\textbf{Rings and Fields}
\begin{itemize}
\item Let $R$ be a ring (we always assume that $R$ is commutative with $1 \neq 0$), we write $R^{\rhd}$ for the multiplicative monoid $R \setminus \{0\}$. Moreover, we write $R_+$ for the underlying additive group of $R$.
\item Let $K$ be a field. Fix an algebraic closure $\overline{K}$ of $K$. We write $K^{\text{sep}},K^{\text{sol}}$ and $K^{\text{ab}}$ for the separable closure, the maximal prosolvable extension and the maximal abelian extension of $K$ contained in $\overline{K}$, respectively.
\item We write $G_K := \text{Gal}(K^{\text{sep}}/K)$ for the absolute Galois group of $K$. We write $G_K^{\text{sol}},G_K^{m}$ and $G_K^{\text{ab}}$ for the maximal prosolvable, maximal $m$-step solvable and the maximal abelian quotient of $G_K$, respectively. Notice that $G_K^{\text{ab}} = G_K^1$.
\item We write $K_m$ for the maximal $m$-step solvable extension of $K$. Thus, $K_m := (K^{\text{sep}})^{G_K^{[m]}}$.
\item Let $K$ be a field. We write $\text{char}(K)$ for the characteristic of $K$.
\item Let $K$ be a field which contains the $n$-th roots of unity for all $n \geq 1$, we write $\mu(K) := (K^{\times})_{\text{tor}}$. Hence
$$
\mu(K) \cong \begin{cases} \mathbb{Q}/\mathbb{Z}, ~\text{if}~\text{char}(K)=0 \\ \bigoplus_{\ell \neq \text{char}(K)} \mathbb{Q}_{\ell}/\mathbb{Z}_{\ell}, ~\text{if}~\text{char}(K)>0. \end{cases}
$$
Moreover, we write $\mu_n(K) := \mu(K)[n]$, and $\Lambda(K) := \varprojlim_n \mu_n(K)$. We shall refer to $\Lambda(K)$ as the cyclotome associated to $K$. Hence
$$
\Lambda(K) \cong \begin{cases} \widehat{\mathbb{Z}}, ~\text{if}~\text{char}(K)=0 \\ \widehat{\mathbb{Z}}^{(\text{char}(K)')} := \prod_{\ell \neq \text{char}(K)} \mathbb{Z}_{\ell}, ~\text{if}~\text{char}(K)>0. \end{cases}
$$
Notice that the above isomorphisms are non-canonical.
\item Let $K$ be a number field, a prime $\mathfrak{p}$ of $K$ corresponds to an equivalence class of valuations $\text{ord}_{\mathfrak{p}}$ of $K$. The prime $\mathfrak{p}$ is said to be non-archimedean if the corresponding valuation $\text{ord}_{\mathfrak{p}}$ is non-archimedean, otherwise $\mathfrak{p}$ is said to be archimedean. We write $\mathscr{P}_K$ for the set of all primes of $K$, $\mathscr{P}_K^{\text{fin}}$ for the set of all non-archimedean primes of $K$, and $\mathscr{P}_K^{\text{inf}}$ for the set of all archimedean primes of $K$. Notice that $\mathscr{P}_K^{\text{inf}}$ is always finite.
\end{itemize}
\section{Introduction}
Anabelian geometry is roughly speaking the study of geometric objects in terms of their \'etale fundamental groups. The simplest geometric object is $\text{Spec}(K)$ for some field $K$. In this case, the \'etale fundamental group of $\text{Spec}(K)$ is isomorphic to $G_K$, and it is natural to ask the following question: how much information on the field $K$ is determined by $G_K$?\par 
Clearly this question depends on the field $K$, as if $K$ is a finite field for example, $K$ has absolute Galois group isomorphic to $\widehat{\mathbb{Z}}$, and hence the isomorphy type of $K$ cannot be determined by $G_K$. Also, if $K$ is algebraically closed or real closed, then $G_K$ is trivial or isomorphic to $\mathbb{Z}/2\mathbb{Z}$, in this case also the isomorphy type of $K$ is not determined by $G_K$ (nevertheless, if $G_K \cong \mathbb{Z}/2\mathbb{Z}$, we can conclude that $\text{char}(K) = 0$ by the Artin-Schreier Theorem). \par 
When $K$ is a $p$-adic local field for some prime number $p$, the structure of $G_K$ is complicated enough, but there are still non-isomorphic $p$-adic local fields $K$ and $L$ with isomorphic absolute Galois groups, e.g. see p.9 Theorem in \cite{JR}. However, if $G_K$ is equipped with the ramification filtration in upper numbering (e.g. see Definition 2.3 in \cite{Mzk1}), then the isomorphy type of $K$ is determined by the isomorphy type of $G_K$ equipped with the ramification filtration, see Theorem 4.2 in \cite{Mzk1}. \par 
When $K$ is a number field, its absolute Galois group is complicated enough to determine the isomorphy type of $K$. More precisely, we have the following
\begin{Thm}[The Neukirch-Uchida Theorem, 1970's]
Let $K,L$ be number fields, and $\sigma: G_L \xrightarrow{\sim} G_K$ an isomorphism. Then $\sigma$ induces a unique isomorphism of fields $\tau: K^{\text{sep}} \xrightarrow{\sim} L^{\text{sep}}$ satisfying $\sigma(g) = \tau^{-1} g \tau$ for all $g \in G_L$, which restricts to an isomorphism of number fields $\tau|_K : K \xrightarrow{\sim} L$. 
\end{Thm}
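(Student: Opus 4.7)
The plan is to follow the classical Neukirch--Uchida strategy, which proceeds in three stages: (i) a group-theoretic reconstruction of the decomposition subgroups of $G_L$ and $G_K$ from the bare isomorphism $\sigma$, (ii) a reconstruction of the local and global multiplicative structures via local class field theory, and (iii) a rigidity argument upgrading the multiplicative identification to an isomorphism of fields.

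First I would establish that $\sigma$ carries decomposition subgroups at non-archimedean primes of $L$ bijectively onto those of $K$. The key ingredient is Neukirch's local characterization: a closed subgroup $D \subset G_L$ is a decomposition subgroup at some $\mathfrak{q} \in \mathscr{P}_L^{\text{fin}}$ if and only if $D$ is topologically isomorphic to the absolute Galois group of some $p$-adic local field and is maximal with this property (this characterization uses cohomological dimension $2$, the structure of the maximal pro-$p$ quotient, and the behaviour of Brauer groups, all of which are invariants of the abstract profinite group). Since the condition is purely group-theoretic, $\sigma$ induces a bijection $\mathscr{P}_L^{\text{fin}} \xrightarrow{\sim} \mathscr{P}_K^{\text{fin}}$ together with isomorphisms of local absolute Galois groups $G_{L_{\mathfrak{q}}} \xrightarrow{\sim} G_{K_{\mathfrak{p}}}$ for each corresponding pair $(\mathfrak{q},\mathfrak{p})$. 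The archimedean primes are then recovered as conjugacy classes of order-$2$ elements in $G_L$.

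Next, I would apply local class field theory to each $G_{L_{\mathfrak{q}}} \xrightarrow{\sim} G_{K_{\mathfrak{p}}}$. Once the cyclotomic character is recognized group-theoretically (as the character giving the $G_{L_{\mathfrak{q}}}$-action on $\Lambda(\overline{L_{\mathfrak{q}}})$, itself reconstructible from $G_{L_{\mathfrak{q}}}$), the reciprocity map yields compatible isomorphisms of profinite completions $\widehat{L_{\mathfrak{q}}^{\times}} \xrightarrow{\sim} \widehat{K_{\mathfrak{p}}^{\times}}$. Assembling these over all primes and intersecting with the appropriate product of unit groups recovers, via global class field theory and the diagonal embedding $L^{\times} \hookrightarrow \mathbb{A}_L^{\times}$, a $\sigma$-compatible isomorphism of multiplicative groups $L_m^{\times} \xrightarrow{\sim} K_m^{\times}$ at every finite level $m$, and in the limit $\overline{L}^{\times} \xrightarrow{\sim} \overline{K}^{\times}$ restricting to $L^{\times} \xrightarrow{\sim} K^{\times}$.

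The main obstacle, and the content of Uchida's contribution, is to promote this multiplicative isomorphism to an \emph{additive} one. The plan is: characterize $-1$ as the unique element of order $2$ in $K^{\times}$, so that the relation $x + y = 0$ becomes multiplicative; then detect the relation $x + y = 1$ by using the decomposition of the principal divisor $(1-x)$ at all primes, which is encoded in the kernel of the reciprocity map and thus preserved under $\sigma$. Once addition by $1$ is recovered in $K^{\times} \cup \{0\}$, the full ring structure follows by the identity $xy + xz = x(y+z)$. The resulting isomorphism of fields $\tau: K^{\text{sep}} \xrightarrow{\sim} L^{\text{sep}}$ is automatically compatible with $\sigma$ in the stated sense; uniqueness is immediate because the centralizer of $G_L$ in $\text{Aut}(L^{\text{sep}})$ is trivial (a number field has no nontrivial automorphisms fixing a dense subgroup of its absolute Galois group), so no two distinct $\tau$'s can implement the same $\sigma$ by conjugation.
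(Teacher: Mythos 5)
The paper does not actually prove Theorem~1.1; it is quoted as a classical background result (with the proofs residing in Neukirch's and Uchida's original papers), so there is no in-text argument to compare your sketch against. I will therefore assess your proposal on its own terms.

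Your stages (i) and (ii) are essentially the correct skeleton and reflect the classical approach: Neukirch's group-theoretic characterization of decomposition subgroups at non-archimedean primes, followed by local class field theory applied prime-by-prime and assembled via the idele-theoretic formulation of global class field theory. The characterization you state (``isomorphic to $G_{K_{\mathfrak p}}$ for some $p$-adic local field and maximal such'') needs to be justified, and the justification \emph{is} the content of Neukirch's theorem, but you correctly flag the cohomological ingredients that go into it. The uniqueness argument at the end via triviality of the centralizer of $G_L$ is also correct and standard.

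The genuine gap is in stage (iii), the reconstruction of addition. Your proposed mechanism --- ``detect $x+y=1$ by using the decomposition of the principal divisor $(1-x)$ at all primes'' --- is circular as stated: to read off the divisor of $1-x$ you already need to know what the element $1-x$ is inside $K^\times$, which is precisely what you are trying to determine. Knowing the divisor of an element determines it only up to a global unit, and more importantly you have not explained how the divisor of $1-x$ is computable from $x$ and the multiplicative/idele-theoretic data alone. The actual arguments that close this gap are more delicate. Uchida's original (bi-anabelian) argument avoids constructing addition directly and instead shows that the bijection of primes, together with compatibility with Frobenius elements and the principal ideles, forces the existence of a field isomorphism via global class field theory and a density argument; it is an indirect existence proof. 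The mono-anabelian route that \emph{does} reconstruct the additive law explicitly (Hoshi's NF-monoid formalism, which is what the present paper uses in its Section~6) requires as input not merely $K^\times$ but the full package $(K_\times,\mathcal O_K^\rhd,\mathscr P_K^{\mathrm{fin}},\{\mathcal O_{(v)}^\prec\}_v)$, i.e.\ the submonoid of integral elements and, crucially, the kernels of the residue maps $\mathcal O_{(v)}^\times\twoheadrightarrow\kappa(v)^\times$ at each prime. It is from these residue data --- not from divisors of $1-x$ --- that one extracts the additive structure. As written, your stage (iii) would not compile into a proof; you would need either to import Uchida's indirect argument or to carry out the NF-monoid reconstruction, and in the latter case you would need to explain how the submonoids $\mathcal O_{(v)}^\prec$ are recovered group-theoretically from $\sigma$.

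A smaller point: the theorem asks for $\tau$ on the full separable closures $K^{\mathrm{sep}}\xrightarrow{\sim}L^{\mathrm{sep}}$ satisfying the conjugation relation, not merely on the base fields. Your sketch gestures at ``in the limit'' but the passage from an isomorphism $L^\times\xrightarrow{\sim}K^\times$ (or $\overline L^\times\xrightarrow{\sim}\overline K^\times$) to a Galois-equivariant field isomorphism on separable closures needs to be spelled out; once one has $\tau|_K$, this is standard (extend to $K^{\mathrm{sep}}$ by Galois descent using the equivariance built into the construction), but it should be stated.
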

However, the structure of $G_K$ is unknown for any number field $K$, so Theorem 1.1 does not have explicit applications a priori. Considering smaller quotients of $G_K$ is desirable. Later in 1976, Uchida proved a stronger version of Theorem 1.1 by considering the corresponding maximal pro-solvable quotient instead of the full absolute Galois group, see \cite{Uchida1}. In 2019, Sa\"idi and Tamagawa proved a greatly improved version of Theorem 1.1 by only considering a solvable quotient with a restricted length of solvability:
\begin{Thm}[Sa\"idi-Tamagawa, 2019, \cite{ST1}]
Let $K,L$ be number fields and $\sigma_{m+3}: G_L^{m+3} \xrightarrow{\sim} G_K^{m+3}$ an isomorphism, which induces an isomorphism $\sigma_m: G_L^m \xrightarrow{\sim} G_K^m$. Then the following assertions hold:\par 
(i) If $m \geq 0$, there exists a field isomorphism $\tau_m : K_m \xrightarrow{\sim} L_m$ such that $\sigma_m(g) = \tau_m^{-1} g \tau_m$ for all $g \in G_L^m$, and $\tau_m$ restricts to a field isomorphism $K \xrightarrow{\sim} L$. \par 
(ii) If $m \geq 2$ (resp. $m=1$), then the isomorphism $\tau_m : K_m \xrightarrow{\sim} L_m$ (resp. $\tau: K \xrightarrow{\sim} L$ induced by $\tau_1: K_1 \xrightarrow{\sim} L_1$) in $(i)$ is uniquely determined by the condition $\sigma_m(g) = \tau_m^{-1} g \tau_m$ for all $g \in G_L^m$.
\end{Thm}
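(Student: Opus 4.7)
The overall plan is to adapt the classical Neukirch--Uchida paradigm to a bounded level of solvability, carefully budgeting how many levels of the derived tower are consumed at each step. Morally, the three extra levels $m+1$, $m+2$, $m+3$ beyond $m$ should be allocated among: Kummer theory at the level of $K_m$, local Kummer theory inside the completions, and the identification of the cyclotomic character and of decomposition subgroups.

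The first step is to reconstruct, purely group-theoretically from $G_K^{m+3}$, the decomposition subgroups at non-archimedean primes of $K_m$ sitting inside $G_K^m$ (and likewise on the $L$-side). In the classical setting (the full $G_K$) this is Neukirch's criterion: a closed subgroup is a decomposition group exactly when it is isomorphic to the absolute Galois group of a non-archimedean local field. Here, an analogous characterisation must be verified using only the bounded solvable quotient $G_K^{m+3}$; this should be feasible because a decomposition group at a non-archimedean prime remains detectable through cohomological and duality conditions together with its action on the (reconstructed) cyclotome visible at the top levels. Once this is achieved, $\sigma_m$ automatically induces a bijection between non-archimedean primes of $K_m$ and $L_m$, matching decomposition subgroups.

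Next, I would invoke Kummer theory (available on $K_m$ since $K_m \supset \mu_\infty$ for $m \geq 1$) together with local class field theory to reconstruct the multiplicative monoid $K_m^{\times}$ with its $G_K^m$-action from the level $m+1$: the abelian group $G_K^{[m+1,m]}$ is, up to a twist by $\Lambda(K)$, Pontryagin dual to the appropriate completion of $K_m^{\times}$. The additional levels $m+2$ and $m+3$ refine this picture locally at each matched non-archimedean prime, yielding the multiplicative groups of the completions $(K_m)_{\mathfrak{p}}$ along with their normalised valuations. From the global multiplicative structure together with the collection of valuations, the additive structure of $K_m$ is then forced by a Hasse-principle / approximation argument, producing a field isomorphism $\tau_m : K_m \xrightarrow{\sim} L_m$ that intertwines $\sigma_m$. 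That $\tau_m$ restricts to an isomorphism $K \xrightarrow{\sim} L$ is automatic from $K = K_m^{G_K^m}$, $L = L_m^{G_L^m}$, and the conjugation relation $\sigma_m(g) = \tau_m^{-1} g \tau_m$; uniqueness for $m \geq 2$ should follow because any outer automorphism of $K_m/K$ compatible with $\sigma_m$ is already pinned down by its action on sufficiently many decomposition groups.

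The main technical obstacle I anticipate lies in the first step: reconstructing decomposition subgroups using only a finite-step solvable quotient rather than the full $G_K$. Neukirch's original criterion is inherently profinite, exploiting all of $G_K$; the challenge is to isolate a solvable-level version that already detects decomposition groups in $G_K^m$ from the data of $G_K^{m+3}$, presumably through some combination of (i) a solvable version of local Tate duality, (ii) Chebotarev density to ensure a sufficient supply of primes, and (iii) compatibility of the local and global cyclotomic characters. Making this rigorous, and verifying that exactly three extra levels beyond $m$ suffice (rather than more), is where I expect the bulk of the work in \cite{ST1} to lie.
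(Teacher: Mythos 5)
This theorem is stated in the paper as a cited result (Theorem 1.2, attributed to Sa\"idi--Tamagawa \cite{ST1}); the paper itself supplies no proof, so there is no in-document argument to compare against. What can be checked is whether your sketch matches the known strategy of \cite{ST1}, and there it departs in a substantive way. You propose to detect decomposition subgroups of $G_K^m$ by adapting Neukirch's cohomological criterion (local Tate duality, cohomological dimension). That criterion is inherently tied to the full profinite $G_K$ (e.g.\ $\mathrm{cd}_p$ arguments and Poitou--Tate duality) and does not pass in any direct way to a truncated derived quotient $G_K^{m+3}$, whose cohomological dimension is no longer controlled. The actual argument in \cite{ST1} follows the Uchida line instead: decomposition groups are characterised by a stability/compatibility condition on families of closed subgroups across finite covers (this is precisely the object $\mathrm{St}(\mathcal{D}_{m+1})$ that the present paper imports in Definition~2.3 via Proposition~1.24 of \cite{ST1}), combined with Chebotarev density and the action on the cyclotome. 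Your proposal does flag ``reconstructing decomposition groups from only a bounded quotient'' as the main obstacle, which is correct, but the tool you reach for is the wrong one, and it is not clear how to repair it without essentially rediscovering the stability criterion.

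Two further gaps. First, your passage from the multiplicative monoid plus valuations to the additive structure ``by a Hasse-principle / approximation argument'' is too thin: Uchida's method recovers the field via the Artin reciprocity map and the characterisation of principal ideles (and, in the mono-anabelian descendants such as \cite{Ho1}, via the NF-monoid machinery), not by approximation alone. Second, you do not actually argue part (ii): uniqueness of $\tau_m$ for $m \geq 2$ (and of the induced $\tau$ for $m=1$) is a separate statement requiring that an automorphism of $K_m/K$ commuting with the $G_K^m$-action and fixing enough decomposition data must be the identity, and your sketch only gestures at this. The budgeting of the three extra steps ($m+1,m+2,m+3$) that you propose is also speculative and not obviously the allocation used in \cite{ST1}; the present paper's own bookkeeping (Remark~5.8) suggests a different accounting.
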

However, both Theorem 1.1 and Theorem 1.2 and their proofs are not explicit constructions, i.e. the two theorems start from isomorphisms of Galois groups and obtain isomorphisms of fields, which includes no explicit reconstruction of the field structures. These kind of results are called \textbf{bi-anabelian results}. On the other hand, explicit reconstruction results of number fields from their (various quotients of) absolute Galois groups are known as \textbf{mono-anabelian results}. \par
For number fields, we have the following mono-anabelian result by Hoshi:
\begin{Thm}[Hoshi, 2021, \cite{Ho2}]
Let $G$ be a profinite group which is isomorphic to $G_K^{\text{sol}}$ for some number field $K$. Then one can group-theoretically reconstruct a solvably closed field (i.e. a field with no non-trivial abelian extension) $\widetilde{F}(G)$ with the action of $G$ and the fixed subfield $F(G) := \widetilde{F}(G)^G$ which is a number field, such that:\par 
(1) $\widetilde{F}(G)$ and $F(G)$ fit into the following commutative diagram:
$$
\begin{tikzcd}
\widetilde{F}(G) \arrow[r,"\sim"] & K^{\text{sol}} \\
F(G) \arrow[u,hookrightarrow] \arrow[r,"\sim"] & K \arrow[u,hookrightarrow]
\end{tikzcd}
$$
where the horizontal arrows are Galois-equivariant (with respect to $G \xrightarrow{\sim} G_K^{\text{sol}}$) isomorphisms and the vertical arrows are natural inclusions. \par
(2) We have a group isomorphism $\text{Gal}(\widetilde{F}(G)/F(G)) \xrightarrow{\sim} G_K^{\text{sol}}$.
\end{Thm}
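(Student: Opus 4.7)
The plan is to turn Uchida's bi-anabelian argument \cite{Uchida1} into a functorial, group-theoretic algorithm that starts only from the abstract profinite group $G$, so that every intermediate object is extracted from the isomorphy type of $G$ alone and does not depend on a choice of identification $G \xrightarrow{\sim} G_K^{\text{sol}}$. First I would reconstruct the cyclotome $\Lambda(G) := \varprojlim_n \mu_n$ together with its $G$-action; this can be done via the $\widehat{\mathbb{Z}}$-Tate twist appearing canonically in the Brauer groups $H^2(H, \Lambda)$ of open subgroups $H \leq G$, or, alternatively, via the invariant isomorphism of local class field theory once the local decomposition groups are at hand. The heart of the argument is the purely group-theoretic characterization of the decomposition subgroups $D_\mathfrak{p} \leq G$ at the non-archimedean primes, together with their inertia and wild inertia filtrations: following Neukirch's classical strategy, these are detectable as the closed subgroups of $G$ whose isomorphy type matches the prosolvable absolute Galois group of a non-archimedean local field, the latter being rigid enough (cohomological dimension, explicit abelianization, recognizable inertia and Frobenius lifts) to be identified internally in $G$.

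Once the set $\Sigma(G)$ of decomposition subgroups has been reconstructed functorially, local class field theory produces, for each $D \in \Sigma(G)$, the local multiplicative group with its valuation and unit filtration: the reciprocity isomorphism $D^{\text{ab}} \xrightarrow{\sim} \widehat{K_{\mathfrak{p}}^\times}$ is canonical once $\Lambda(G)$ is known. The global class field theory exact sequence $1 \to K^\times \to \mathbb{I}_K \to \mathbb{I}_K/K^\times \to 1$, together with the comparison between $G^{\text{ab}}$ and the idele class group, then characterizes $K^\times$ inside the restricted product of local multiplicative groups in purely group-theoretic terms. Carrying this out compatibly over every open subgroup $H \leq G$ and taking the direct limit produces the $G$-equivariant multiplicative group $\widetilde{F}(G)^\times$, which extends to the multiplicative monoid $(\widetilde{F}(G)^\times)_\times$ after adjoining a formal $0$.

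The main obstacle is then to reconstruct the additive structure on $\widetilde{F}(G) := \widetilde{F}(G)^\times \sqcup \{0\}$ group-theoretically. Following Uchida's classical trick, the relation ``$\alpha + \beta = \gamma$'' can be detected through the joint behavior of the valuations $\text{ord}_{\mathfrak{p}}(\alpha), \text{ord}_{\mathfrak{p}}(\beta), \text{ord}_{\mathfrak{p}}(\gamma)$ for a Chebotarev-dense set of non-archimedean primes $\mathfrak{p}$, and each such valuation is already group-theoretic by the previous step; the delicate point is to phrase this characterization in a manifestly $G$-intrinsic way and to verify that sufficiently many primes remain visible after passing to the prosolvable quotient, which requires a suitable version of Chebotarev density for $K^{\text{sol}}/K$. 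Once addition has been reconstructed, $\widetilde{F}(G)$ is a solvably closed field carrying a natural $G$-action, $F(G) := \widetilde{F}(G)^G$ is a number field, and the commutative diagram together with the identification $\text{Gal}(\widetilde{F}(G)/F(G)) \xrightarrow{\sim} G_K^{\text{sol}}$ of parts (1) and (2) follow from the functoriality of each reconstruction step, combined with Uchida's bi-anabelian theorem applied to certify that the reconstructed field agrees with $K^{\text{sol}}$ under any chosen isomorphism $G \xrightarrow{\sim} G_K^{\text{sol}}$.
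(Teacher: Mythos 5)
The statement you are asked about is Theorem~1.3 in the paper, which is quoted from Hoshi \cite{Ho2} and \emph{not proved} there; the paper treats it as a known result. So there is no ``paper's own proof'' to compare against directly. What the paper does do, in Sections~2--4 and~6, is adapt the key pieces of Hoshi's machinery to the $m$-step solvable setting, and from that one can see where your sketch does and does not track Hoshi's actual argument.

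Your overall architecture (cyclotome, decomposition subgroups, local then global class field theory, then additive structure, all done compatibly over open subgroups and passed to the limit) is indeed the right skeleton. However, there are two points where the sketch elides the genuinely hard content. First, you assert that the decomposition subgroups are ``detectable as the closed subgroups of $G$ whose isomorphy type matches the prosolvable absolute Galois group of a non-archimedean local field.'' This is not a correct characterization: abstract isomorphy type alone does not single out the genuine decomposition subgroups, and the entire local theory of Neukirch--Uchida (and, in the restricted-solvability setting, the substantial work of Sa\"idi--Tamagawa \cite{ST1} quoted in Definition~2.3 and Theorem~2.6) is devoted to producing a group-theoretic characterization that does work, via Brauer-group/cohomological rigidity and the structure of intersections and normalizers of candidate subgroups. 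Treating this as a one-line recognition step hides where most of the technical input lives.

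Second, for the additive structure you invoke ``Uchida's classical trick'' of reading off $\alpha+\beta=\gamma$ from valuations. Uchida's argument is bi-anabelian --- it takes a given isomorphism of Galois groups and shows it is geometric --- and does not by itself furnish a mono-anabelian algorithm for addition. Hoshi's actual mechanism is the NF-monoid reconstruction (quoted in this paper as Lemma~6.3, citing Theorem~2.9 of \cite{Ho1}): one must first produce, purely from $G$, the full NF-monoid datum $(K_\times,\mathcal{O}_K^{\rhd},\mathscr{P}_K^{\mathrm{fin}},\{\mathcal{O}_{(v)}^{\prec}\})$, and only then does the additive operation come out by a nontrivial limiting/approximation argument. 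Relatedly, your step ``global class field theory characterizes $K^\times$ inside the restricted product'' glosses over the fact that the reciprocity map only hits the profinite completion of the idele class group, so one does not directly see $K^\times$; this is precisely why Hoshi (and this paper in Definition~3.8 and Definition~3.10) introduces the Kummer container $\mathcal{H}^{\times}$ as a fibre product of $(K^\times)^\wedge$ with $\mathbb{I}_K^{\mathrm{fin}}$, and then has to argue separately (Lemma~6.5) when $\mathcal{H}^{\times}(K)$ actually coincides with $K^\times$. Your sketch would need to be upgraded to this Kummer-container formalism to be correct. As a final remark, the paper's own route to its Theorem~1 is deliberately \emph{different} from Hoshi's and from your sketch: it feeds the bi-anabelian Theorem~1.2 of Sa\"idi--Tamagawa into the construction of ``standard type subfields'' (Section~5) instead of reconstructing the additive structure directly, so your proposal is closer in spirit to Hoshi's original argument (and to the paper's Section~6) than to the paper's main theorem.
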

The goal of this paper is to prove an analogue of Theorem 1.3 in the context of Theorem 1.2, more precisely, we prove the following:
\begin{THM}
Let $m \geq 3$, and let $G$ be a profinite group which is isomorphic to $G_K^{m+9}$ for some number field $K$. Then one can group-theoretically reconstruct a field $F_m(G^m)$ with the action of $G^m$ and the fixed subfield $F(G^m) := F_m(G^m)^{G^m}$ which is a number field, such that: \par
(1) We have a group isomorphism $\text{Gal}(F_m(G^m)/F(G)) \xrightarrow{\sim} G_K^{m}$. \par
(2) $F_m(G^m)$ and $F(G^m)$ fit into the following commutative diagram:
$$
\begin{tikzcd}
F_m(G^m) \arrow[r,"\sim"] & K_m \\
F(G^m) \arrow[u,hookrightarrow] \arrow[r,"\sim"] & K \arrow[u,hookrightarrow]
\end{tikzcd}
$$
where the horizontal arrows are Galois-equivariant isomorphisms (with respect to $G^m\xrightarrow{\sim}G_K^m$ induced by $G \xrightarrow{\sim} G_K^{m+9}$) and the vertical arrows are natural inclusions. Moreover, the isomorphisms in the diagram are functorial with respective to the isomorphy type of $G^m$.\par 
\end{THM}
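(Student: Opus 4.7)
The plan is to adapt Hoshi's solvable mono-anabelian reconstruction (Theorem 1.3) to the finite-step solvable setting, carefully tracking the number of derived layers consumed at each stage so that everything can be carried out using only $G \cong G_K^{m+9}$ in place of $G_K^{\text{sol}}$. I would reconstruct, in order: (a) the decomposition data at non-archimedean primes in $G^m$, (b) the cyclotome and the cyclotomic character, (c) the local and global multiplicative structures, and (d) the additive structure of $K_m$.

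\textbf{Decomposition data and cyclotome.} Working inside $G^m$ (the quotient induced by the fixed isomorphism $G \xrightarrow{\sim} G_K^{m+9}$), I would first give a group-theoretic recipe for the collection of decomposition subgroups $D_{\mathfrak{p}}$ and inertia subgroups $I_{\mathfrak{p}}$ attached to the non-archimedean primes $\mathfrak{p}$ of $K_m$. Existence and functoriality in the bi-anabelian sense follow from Theorem 1.2, but here an explicit algorithm is required; the extra layers in $G^{m+9}$ beyond $G^{m+3}$ serve to certify candidate subgroups via cohomological tests and local duality (detecting, for instance, the correct structure of the abelianization of a local absolute Galois group). From the action of $G^m$ on appropriate abelian subquotients of $G^{m+9}$ (in particular those coming from $G^{[1,0]}$ and its Tate twists), one then reconstructs the cyclotome $\Lambda \cong \widehat{\mathbb{Z}}(1)$ together with its $G^m$-action, and hence the cyclotomic character.

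\textbf{Multiplicative monoid and additive structure.} Combining the decomposition data and the cyclotome, local Kummer theory applied at each $D_{\mathfrak{p}}$ reconstructs the multiplicative monoid of the completion $(K_m)_{\mathfrak{p}}$ as a Galois module, together with its valuation. Globalization then recovers $K_m^{\times}$ as the diagonal image inside the restricted product of local multiplicative groups, characterized via the product formula (or equivalently as the kernel of the adelic degree map), following the globalization step of Hoshi's strategy. A Uchida-type argument then recovers the addition on $K_m^{\times} \cup \{0\}$ from the multiplicative monoid together with the Galois action and the reconstructed valuation data, yielding the field $F_m(G^m)$. Taking $G^m$-invariants gives the number field $F(G^m)$; the Galois-equivariant isomorphism with $K_m$ is built into the construction, and functoriality with respect to the isomorphy type of $G^m$ follows from the fact that each step of the recipe depends only on intrinsic group-theoretic data extracted from $G$.

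\textbf{Main obstacle.} The principal difficulty is the decomposition step together with the bookkeeping of solvability levels: one must give a purely group-theoretic definition of decomposition subgroups in $G^m$ that is provably correct on the nose (not merely up to a noncanonical bijection) and that is functorial along arbitrary isomorphisms of the ambient $(m+9)$-step group. The bound $m+9$ originates in the accumulation here: Theorem 1.2 already needs $m+3$ layers for a bi-anabelian statement, and each subsequent explicit reconstruction (cyclotome, local Kummer theory, adelic globalization, Uchida-type recovery of addition) consumes a small constant number of further derived layers, summing to the extra $6$.
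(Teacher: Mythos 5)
Your outline correctly identifies the broad ingredients (decomposition data, cyclotome, local Kummer theory, globalisation), and your accounting of the $m+9$ bound is roughly in the right spirit, but the central globalisation step is not carried out the way you describe, and as stated it does not work. The key obstruction is that for a general number field $K$, the multiplicative group $K^{\times}$ cannot be cut out inside the restricted product of local multiplicative groups by any product-formula or degree-map condition: the idele class group is large, and the natural group-theoretic object one can construct (the Kummer container $\mathcal{H}^{\times}(K)$, which is exactly the fibre product of $(K^{\times})^{\wedge}$ and $\mathbb{I}_K^{\text{fin}}$ over the local completions) is strictly larger than $K^{\times}$ unless the unit group $\mathcal{O}_K^{\times}$ is finite (Lemma 6.5 in the paper). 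This is precisely why the NF-monoid/Uchida-type recovery of addition is confined in the paper to Theorem 2, i.e.\ to $\mathbb{Q}$ and imaginary quadratic fields; it does not extend to general $K$, and so the "adelic globalisation $\to$ Uchida-type recovery of addition" route in your plan has a genuine gap.

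The paper's actual route to Theorem 1 is different. It first reconstructs $\mathbb{Q}_{m+3}$ (not $K_m$) by the iterative Hoshi-style construction $\mathcal{F}(L,n)$, repeatedly adjoining $N$-th roots inside the Kummer containers (Section 4, Lemma 4.2): this exploits the fact that the "root-closure" process stabilises on $\mathbb{Q}_{m+3}$, and does not require a diagonal characterisation of $K^{\times}$. Then, for each decomposition group $D$ of residual degree $d_D=1$, it reconstructs the local field $k(D)_{m+3} \cong (\mathbb{Q}_{p_D})_{m+3}$ together with its field structure (Proposition 5.2, adapting Hoshi's completion argument via a fundamental system of characteristic subgroups). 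The image of $K_m$ inside $k(D)_{m+3}$ is then characterised abstractly as a pair of "length $m$ standard type subfields" (Definition 5.3); existence and, crucially, uniqueness of such subfields (Propositions 5.4--5.6) are established by invoking Theorem 1.2 of Sa\"idi--Tamagawa as a black box, and the same theorem supplies the unique compatibility isomorphisms $\tau_{\mathfrak{D},\mathfrak{E}}$ used to glue the local copies into a single field $F_m(G^m) \subset \prod_{\mathfrak{D}} F_m[\mathfrak{D}]$. Your proposal treats Theorem 1.2 merely as an existence/functoriality sanity check and tries to replace it with a direct adelic cut-out; the paper instead uses Theorem 1.2 in an essential, load-bearing way (as the introduction emphasises), precisely to bypass the idele-class-group obstruction that blocks your globalisation step.
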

If we only consider imaginary quadratic fields and $\mathbb{Q}$, then we have the following result.
\begin{THM}
If $G$ is a profinite group which is isomorphic to $G_K^6$, where $K$ is an imaginary quadratic field or $\mathbb{Q}$, then one can group-theoretically reconstruct a field $F(G)$ such that $F(G) \xrightarrow{\sim} K$.
\end{THM}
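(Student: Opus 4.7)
The strategy is to follow the same Neukirch--Uchida / Hoshi-style reconstruction recipe that underlies Theorem 1.4, but to exploit two arithmetic simplifications peculiar to $\mathbb{Q}$ and imaginary quadratic $K$ in order to fit the entire argument inside $G^6$ rather than $G^{12}$ (the smallest instance of Theorem 1.4). The first simplification is that $\mathcal{O}_K^\times$ has rank zero by Dirichlet, so the global units coincide with $\mu(K)$; the second is that $K^{\text{ab}}$ admits an explicit description (the cyclotomic tower over $\mathbb{Q}$ by Kronecker--Weber, the fields of CM torsion points over an imaginary quadratic $K$), which makes the cyclotome $\Lambda(K)$ visible already at the abelian level.

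Concretely, I would first read off the signature $(r_1,r_2)$, and hence whether $K \cong \mathbb{Q}$ or $K$ is imaginary quadratic, from the $\widehat{\mathbb{Z}}$-rank of $G^{\text{ab}/\text{tor}}$, the torsion of $G^{\text{ab}}$, and the reconstructed archimedean decomposition subgroups of $G^{\text{ab}}$. Next, I would reconstruct the non-archimedean decomposition groups $D_\mathfrak{p} \subset G^6$, their inertia subgroups, and the cyclotome $\Lambda(K)$ by the Uchida-type procedures used in the proof of Theorem 1.4; here the explicit description of $K^{\text{ab}}$ in the two special cases makes the cyclotomic character extractable essentially from $G^1$, collapsing several solvable layers that the general proof needs. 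Via local class field theory this yields local uniformizers up to units, and because $\mathcal{O}_K^\times$ is torsion, the Kummer-theoretic separation of units from non-units --- which in the general argument devours most of the additional derived layers --- is trivial. The reconstructed multiplicative group $K^\times$ then receives its additive structure by the Hoshi procedure of Theorem 1.3, and the isomorphism class of $K$ is ultimately pinned down by the set of ramified primes (equivalently, the discriminant in the imaginary quadratic case).

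The main obstacle will be the tight layer-by-layer bookkeeping. One must verify that decomposition-group recovery, cyclotome identification, local reciprocity at all non-archimedean primes, and the final field-axiom certification for addition can all be arranged within the first six derived quotients, with no hidden dependence on deeper layers. The plan should succeed because the two arithmetic simplifications above collapse, in a quantitatively controlled way, exactly the surplus derived steps that Theorem 1.4 consumes in the general setting: rank-zero units remove the Kummer-descent layers required to isolate $\mathcal{O}_K^\times$ inside $K^\times$, and the explicit structure of $K^{\text{ab}}$ removes the layers required to pin down $\Lambda(K)$ by Kummer theory. I would expect to revisit the proof of Theorem 1.4 stage by stage and certify that, under these two hypotheses, each stage's solvable-step budget can be reduced to fit inside $G^6$.
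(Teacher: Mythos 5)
Your intuition that the finiteness of $\mathcal{O}_K^\times$ is the arithmetic lever is exactly right, but the strategy you build around it is not the one the paper uses, and in places it is unlikely to work as stated.

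The central framing of your proposal---that Theorem 2 is obtained by ``compressing'' the proof of Theorem 1 into six derived layers---is explicitly contradicted by the paper, which states in the introduction that the two proofs are \emph{fundamentally different}: the proof of Theorem 1 uses the statement of Theorem 1.2 in an essential way (accounting for three of the nine extra steps, cf.\ Remark 5.8), whereas the proof of Theorem 2 does not invoke Theorem 1.2 at all. There is therefore no ``layer-by-layer bookkeeping'' to do for a compressed version of the standard-type-subfield machinery of Section 5; that machinery needs $m\geq 3$ and $G^{m+9}$ and so bottoms out at $G^{12}$. The route through $G^6$ is a different route.

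The mechanism the paper actually uses is Hoshi's notion of an \emph{NF-monoid} (Definitions 6.1--6.2, Lemma 6.3, citing Theorem 2.9 of \cite{Ho1}), not ``the Hoshi procedure of Theorem 1.3'', which is a reconstruction from $G_K^{\mathrm{sol}}$ and plays no role here. The decisive point is Lemma 6.5: the exact sequence
$$
1 \to \widehat{\mathcal{O}_K^\times} \to \mathcal{H}^\times(K) \to K^\times/\mathcal{O}_K^\times \to 1
$$
shows that $K^\times \hookrightarrow \mathcal{H}^\times(K)$ is an isomorphism precisely when $\mathcal{O}_K^\times$ is finite. Since the Kummer container $\mathcal{H}^\times(G^1)$ is group-theoretically reconstructible from $G\cong G_K^6$ (Lemma 6.4, Remark 6.8), one obtains the \emph{honest} multiplicative monoid $K_\times$, not just a profinite completion, directly from $G^6$. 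Theorem 6.6 then packages $K_\times$ together with $\mathcal{O}_K^{\rhd}$, $\mathscr{P}_K^{\mathrm{fin}}$, and the local submonoids $\mathcal{O}_{(v)}^{\prec}$ (all obtained from the reconstructed decomposition groups, inertia, Frobenius, and residue fields) into an NF-monoid, and Lemma 6.3 supplies the additive structure. This is where the field is built; there is no need to ``pin down the isomorphism class by the set of ramified primes''---that would be a bi-anabelian argument, whereas the theorem asks for a mono-anabelian reconstruction of the field itself.

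Two more specific gaps. First, your claim that the explicit description of $K^{\mathrm{ab}}$ (Kronecker--Weber, CM theory) lets one extract the cyclotomic character ``essentially from $G^1$'' is unsubstantiated and circular: one does not yet know $K$ or $K^{\mathrm{ab}}$ at that stage, and in the paper the cyclotomic character is obtained from $G^3$ (Theorem 1.26 of \cite{ST1}) while the cyclotomes $\Lambda(G^1),\Lambda(D^1)$ and their synchronisation (Theorem 3.7) require the full budget up to $G^6$. Second, the signature-detection step is unnecessary: the hypothesis that $K$ is $\mathbb{Q}$ or imaginary quadratic is assumed, and the NF-monoid construction proceeds uniformly without needing to distinguish the two cases.
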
 
The proofs of Theorem 1 and Theorem 2 are fundamentally different. One of the key differences is that the proof of Theorem 1 uses the statement of Theorem 1.2 in an essential way, but the proof of Theorem 2 does not involve the statement of Theorem 1.2. From this point of view, one can conclude that Theorem 1 does not provide an alternative proof to Theorem 1.2. On the other hand, Theorem 2 does provide an alternative proof to a weaker version of Theorem 1.2, but only for imaginary quadratic fields and $\mathbb{Q}$. More precisely, if $K,L$ are both imaginary quadratic or $\mathbb Q$, then Theorem 2 implies that if $G_K^6 \cong G_L^6$ then $K \cong L$.
\section{Reconstruction of Local Invariants}
In this section, we use the local theory established by Sa\"idi and Tamagawa in \cite{ST1} in order to reconstruct various local invariants group theoretically.
\begin{defn}
Let $G$ be a profinite group, and let $m \geq 0$ be an integer. We say that $G$ is of $\text{GSC}^m$-type (c.f. Definition 3.2 in \cite{Ho1} for an analogous definition of profinite groups of $\text{GSC}$-type) if $G$ admits the following data \par 
(1) A number field $K$. \par 
(2) A maximal $m$-step solvable extension $K_m$ of $K$. \par 
(3) A continuous isomorphism $\alpha_m: \text{Gal}(K_m/K) \xrightarrow{\sim} G$.
\end{defn}
\begin{rem}
(i) Let $m' \geq m \geq 0$. Let $G$ be a profinite group of $\text{GSC}^{m'}$-type. Then $\widetilde{G} := G^m$ is of $\text{GSC}^m$-type. In this case we say that $\widetilde{G}$ arises from a profinite group of $\text{GSC}^{m'}$-type. \par 
(ii) Let $m,n \geq 0$. Let $G$ be a profinite group of $\text{GSC}^{m+n}$-type. Let $H \subset G^m$ be an open subgroup, we write $\widetilde{H}$ for the inverse image of $H$ in $G$. Then $\widetilde{H}^n$ is a profinite group of $\text{GSC}^n$-type.
\end{rem}
In particular, we view a profinite group of $\text{GSC}^m$-type as an \textbf{abstract profinite group} rather than a distinguished quotient of the absolute Galois group of some number field.
\begin{defn}
Let $m \geq 1$ be an integer. Let $G$ be a profinite group of $\text{GSC}^{m+2}$-type. Then we write
$$
\widetilde{\text{Dec}}(G^m)
$$
for the image of $\text{St}(\mathcal{D}_{m+1})$ (see Proposition 1.24 in \cite{ST1} for the definition of $\text{St}(\mathcal{D}_{m+1})$) via $\alpha_m: G_K^m \xrightarrow{\sim} G^m$ (it follows from Theorem 1.25 in \cite{ST1} that this image is independent from the choice of isomorphism $\alpha_m$). Moreover, $G^m$ acts on $\widetilde{\text{Dec}}(G^m)$ by conjugation (c.f. Loc cit), we write $\text{Dec}(G^1) := \widetilde{\text{Dec}}(G^m)/G^m$.
\end{defn}
\begin{rem}
Essentially, $\widetilde{\text{Dec}}(G^m)$ corresponds to the set of decomposition subgroups of $G_K^m$ at non-archimedean primes of $K_m$, and $\text{Dec}(G^1)$ is in bijection with the set of non-archimedean primes of $K$. A detailed proof can be found in Section 1.3 in \cite{ST1}. One should notice that $\widetilde{\text{Dec}}(G^m)$ depends on $m$ (c.f. Definition 2.1 and Definition 2.2), but $\text{Dec}(G^1)$ in the above definition does not depend on $m$. For example, if $G$ is a profinite group of $\text{GSC}^m$-type for some $m \geq 3$, then $\widetilde{\text{Dec}}(G)$ and $\widetilde{\text{Dec}}(G^{m-1})$ are different, but modulo the action of $G$ (resp. $G^{m-1}$) by conjugation on both sets we have the same set $\text{Dec}(G^1)$, i.e. $\widetilde{\text{Dec}}(G)/G = \widetilde{\text{Dec}}(G^{m-1})/G^{m-1}$.
\end{rem}
\begin{defn}
Let $G$ be a profinite group of $\text{GSC}^{m+7}$-type for some integer $m \geq 0$. Let $H \subset G^{m+3}$ be an open subgroup. We write $\widetilde{H}$ for the inverse image of $H$ in $G$. Thus $\widetilde{H}^4$ can be identified with $G_L^4$ for some finite intermediate subextension $L/K$ of $K_{m+3}/K$. We set (by abusing notions)
$$
\widetilde{\text{Dec}}(H) := \widetilde{\text{Dec}}(\widetilde{H}^2).
$$
Moreover, we write $\text{Dec}(H) := \widetilde{\text{Dec}}(H)/\widetilde{H}^2$.
\end{defn}
\begin{Thm}
Let $G$ be a profinite group of $\text{GSC}^6$-type. Let $D \in \widetilde{\text{Dec}}(G^4)$. It follows from Proposition 1.1 (i) in \cite{ST1} that $D \xrightarrow{\sim} D_{\mathfrak{p}_4} = \text{Gal}((K_{\mathfrak{p}})_4/K_{\mathfrak{p}})$ for some uniquely determined $\mathfrak{p}_4 \in \mathscr{P}_{K_4}^{\text{fin}}$ and $\mathfrak{p}$ is the image of $\mathfrak{p}_4$ in $K$. Then we can group-theoretically reconstruct the following objects from $D$: \par 
(i) The residue characteristic $p_\mathfrak{p}$ of $K_{\mathfrak{p}}$. \par 
(ii) The inertia degree $f_\mathfrak{p}$ of $K_{\mathfrak{p}}$. \par
(iii) The degree $d_{K_{\mathfrak{p}}} := [K_{\mathfrak{p}}:\mathbb{Q}_{p_{\mathfrak{p}}}].$ \par 
(iv) The absolute ramification index $e_{\mathfrak{p}}$ of $K_{\mathfrak{p}}$. \par 
(v) The inertia subgroup $I_{\mathfrak{p}_2}$ of $D_{\mathfrak{p}_2}$, where $\mathfrak{p}_2$ is the image of $\mathfrak{p}_4$ in $K_2$. \par 
(vi) The wild inertia subgroup $W_{\mathfrak{p}_2}$ of $I_{\mathfrak{p}_2}$. \par 
(vii) The Frobenius element $\text{Frob}_{\mathfrak{p}}$, i.e. the topological generator of $D_{\mathfrak{p}_2}/I_{\mathfrak{p}_2}$ corresponding to the Frobenius automorphism. \par 
(viii) The multiplicative group $K_{\mathfrak{p}}^{\times}$. \par 
(ix) The group of roots of unity $\mu(K_{\mathfrak{p}}^{\text{ab}})$ contained in $K_{\mathfrak{p}}^{\text{ab}}$ and the cyclotome $\Lambda(K_{\mathfrak{p}}^{\text{ab}})$ associated to $K_{\mathfrak{p}}^{\text{ab}}$ as a $D$-module (whose action factors through $D^1$). \par 
While the objects in assertions (i)-(iv) can be reconstructed from $D^1$, the objects in assertions (v)-(vii) can be reconstructed from $D^3$, and the objects in assertion (ix) can be reconstructed from $D^4$.
\end{Thm}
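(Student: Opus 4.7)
The plan is to reconstruct the local invariants layer by layer along the derived filtration of $D$, combining local class field theory for the $p$-adic local field $K_\mathfrak{p}$ with the explicit structure theory of its absolute Galois group; the identification $D \xrightarrow{\sim} \text{Gal}((K_\mathfrak{p})_4/K_\mathfrak{p})$ supplied by Proposition 1.1(i) of \cite{ST1} is the starting point. For assertions (i)--(iv), the abelianisation $D^1$ is canonically isomorphic via local class field theory to the profinite completion $\widehat{K_\mathfrak{p}^{\times}} \cong \widehat{\mathbb{Z}} \times \mu(K_\mathfrak{p}) \times \mathbb{Z}_{p_\mathfrak{p}}^{d_{K_\mathfrak{p}}}$. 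The residue characteristic $p_\mathfrak{p}$ is singled out as the unique prime $\ell$ for which the pro-$\ell$ part of $D^1$ has $\mathbb{Z}_\ell$-rank at least two; the degree $d_{K_\mathfrak{p}}$ is read off from the $\mathbb{Z}_{p_\mathfrak{p}}$-rank of the pro-$p_\mathfrak{p}$ part of $D^1$; the inertia degree $f_\mathfrak{p}$ is determined by the prime-to-$p_\mathfrak{p}$ torsion of $D^1$, which is cyclic of order $p_\mathfrak{p}^{f_\mathfrak{p}} - 1$; and $e_\mathfrak{p} = d_{K_\mathfrak{p}}/f_\mathfrak{p}$.

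For assertions (v)--(vii), I would pass to $D^3$. Since the maximal unramified extension of $K_\mathfrak{p}$ is already abelian, the abelian inertia inside $D^1$ is intrinsic: under $D^1 \cong \widehat{K_\mathfrak{p}^{\times}}$ it corresponds to $\mathcal{O}_\mathfrak{p}^{\times}$, characterised group-theoretically as the closed subgroup generated by all torsion together with the pro-$p_\mathfrak{p}$ part (equivalently, as the unique closed subgroup whose quotient in $D^1$ is torsion-free of $\widehat{\mathbb{Z}}$-rank one). Pulling this back along $D^2 \twoheadrightarrow D^1$ yields the full inertia $I_{\mathfrak{p}_2} \subset D^2$; the wild inertia $W_{\mathfrak{p}_2}$ is then the maximal pro-$p_\mathfrak{p}$ closed normal subgroup of $I_{\mathfrak{p}_2}$. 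The Frobenius $\text{Frob}_\mathfrak{p}$ is identified as the unique topological generator of $D^2/I_{\mathfrak{p}_2} \cong \widehat{\mathbb{Z}}$ whose conjugation action on the tame quotient $I_{\mathfrak{p}_2}/W_{\mathfrak{p}_2}$ is multiplication by $p_\mathfrak{p}^{f_\mathfrak{p}}$; the passage through $D^3$ is what permits this conjugation action to be witnessed intrinsically.

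For assertions (viii)--(ix), once $\text{Frob}_\mathfrak{p}$ and the abelian inertia are in hand, the multiplicative group $K_\mathfrak{p}^{\times}$ is recovered as the preimage inside $\widehat{K_\mathfrak{p}^{\times}} \cong D^1$ of the discrete subgroup $\text{Frob}_\mathfrak{p}^{\mathbb{Z}} \subset \widehat{\mathbb{Z}}$ obtained as the torsion-free quotient of $D^1$ modulo the abelian inertia. For the cyclotome $\Lambda(K_\mathfrak{p}^{\text{ab}})$ and the roots of unity $\mu(K_\mathfrak{p}^{\text{ab}})$, I would identify the cyclotomic character $\chi: D^1 \to \widehat{\mathbb{Z}}^{\times}$ intrinsically by using Kummer theory one solvable level higher: the kernel $D^{[2,1]} := \ker(D^2 \twoheadrightarrow D^1)$ is the abelianisation of $G_{K_{\mathfrak{p},1}}$, whose torsion-free quotient identifies via local class field theory at the level of $K_{\mathfrak{p},1}$ with $\widehat{K_{\mathfrak{p},1}^{\times}}^{/\text{tor}}$, and the $D^1$-conjugation action on this quotient recovers $\chi$. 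The extra solvable depth $D^4$ is what is needed to invoke the Kummer-theoretic identifications at the level of $K_{\mathfrak{p},1}$ with full functoriality, after which $\mu(K_\mathfrak{p}^{\text{ab}}) = \Lambda(K_\mathfrak{p}^{\text{ab}}) \otimes_{\widehat{\mathbb{Z}}} \mathbb{Q}/\mathbb{Z}$ is reconstructed as a $D$-module.

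The main obstacle I anticipate lies precisely in this final step: the canonical and functorial identification of the cyclotome as a $D$-module (rather than as an abstract copy of $\widehat{\mathbb{Z}}$) requires a careful Kummer-theoretic comparison between $D^{[2,1]}$ and the multiplicative structure of $K_{\mathfrak{p},1}$, and verifying that this comparison recovers $\chi$ independently of the choice of isomorphism $\alpha$ underlying the $\text{GSC}^6$-type structure is the most delicate point. By contrast, assertions (i)--(viii) follow fairly directly from local class field theory together with the known structure of the absolute Galois group of a $p$-adic field, so the technical heart of Theorem 2.1 concentrates in the cyclotome reconstruction at solvable depth four.
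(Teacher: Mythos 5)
Your treatment of assertions (i)--(iv) is essentially the paper's: the paper characterises $p_D$ as the unique prime $l$ with $\log_l|D^{\text{ab/tor}}/lD^{\text{ab/tor}}|\geq 2$, then $d_D$ as that logarithm minus one, $f_D$ from the prime-to-$p_D$ torsion, and $e_D=d_D/f_D$. Your formulation in terms of $\mathbb{Z}_\ell$-ranks is equivalent.

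The genuine gap is in (v): both group-theoretic characterisations you offer for the image of inertia inside $D^1$ are incorrect, and in fact $\mathcal{O}_{\mathfrak{p}}^{\times}$ is \emph{not} intrinsic to the abstract profinite abelian group $D^1$. Concretely, $D^1\cong\widehat{\mathbb{Z}}\times\mu\times\mathbb{Z}_p^d$ with $\mu$ finite, while $\mathcal{O}_{\mathfrak{p}}^{\times}$ corresponds to a copy of $\mu\times\mathbb{Z}_p^d$. Your first characterisation (``closed subgroup generated by torsion together with the pro-$p$ part'') produces $\mathbb{Z}_p\times\mu\times\mathbb{Z}_p^d$, which is strictly larger: it has $\mathbb{Z}_p$-rank $d+1$, not $d$. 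Your second characterisation (``the unique closed subgroup whose quotient is torsion-free of $\widehat{\mathbb{Z}}$-rank one'') fails because such a subgroup is not unique: writing $D^1/\mu\cong\widehat{\mathbb{Z}}^{(p')}\times\mathbb{Z}_p^{d+1}$, any surjection $\mathbb{Z}_p^{d+1}\twoheadrightarrow\mathbb{Z}_p$ gives such a quotient, and distinct such surjections yield distinct kernels. The inference ``the maximal unramified extension is abelian, hence the inertia is intrinsic in $D^1$'' is a non sequitur: abelianity of the unramified quotient does not make its kernel group-theoretically distinguished inside the abelianisation. This is precisely why the theorem's final sentence places (v)--(vii) at depth $D^3$ rather than $D^1$. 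The paper's actual argument quantifies over open normal subgroups $H\subset D^2$, lifts to $\widetilde H\subset D^3$, applies (i)--(iv) to $\widetilde H^1$ to read off $e_{\widetilde H^1}$, and characterises $I(D^2)$ as $\bigcap\{H : e_{\widetilde H^1}=e_D\}$, i.e.\ the intersection of all open normal subgroups cutting out unramified subextensions. Detecting ramification of subextensions via the derived structure of $D^3$ is essential; it cannot be bypassed by inspection of $D^1$ alone.

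Assertions (vi)--(viii), as you set them up, inherit this gap since they depend on having the inertia and Frobenius in hand; conditionally on a correct $I(D^2)$, your identifications agree with the paper's (the pro-$p_D$-Sylow of $I(D^2)$, the element acting by $p_D^{f_D}$ on the tame quotient, the fibre product $D^1\times_{D^{\text{unr}}}\text{Frob}(D)^{\mathbb{Z}}$). For (ix) the paper takes a more direct route than your proposal: it does not first isolate the cyclotomic character on $D^{[2,1]}$, but instead applies the construction of $k^\times(-)$ to $\widetilde H^3$ for each open $H\subset D^1$ and sets $\mu(D^1):=\varinjlim_H k^\times(H)_{\text{tor}}$ with transition maps coming from the transfer; the $D$-module structure is then automatic from functoriality. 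Your plan to recover $\chi$ from the conjugation action on $D^{[2,1]}$ and tensor up is plausible in spirit but is not carried out, and as stated leans on ``local class field theory at the level of $K_{\mathfrak{p},1}$'' for the infinite extension $(K_\mathfrak{p})_1/\mathbb{Q}_p$, which needs justification; the paper's limit construction avoids this entirely.
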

\begin{proof}
Assertions (i)-(iv) are well-known from local class field theory and the explicit structure of $K_{\mathfrak{p}}^{\times}$ (e.g. see Proposition 5.7 in \cite{ANT}). In particular, the residue characteristic of $K_{\mathfrak{p}}$ can be characterised as the unique prime number $l$ such that $\log_{l}|D^{\text{ab/tor}}/lD^{\text{ab/tor}}| \geq 2$. We write $p_D$ for this prime number. The inertia degree of $K_{\mathfrak{p}}$ can be characterised as $\log_{p_D}|1+(D^{\text{ab}})_{\text{tor}}^{(p_D')}|$, we write $f_D$ for this integer. The degree $d_{K_{\mathfrak{p}}} = [K_{\mathfrak{p}}: \mathbb{Q}_{p_D}]$ can be characterised as $\log_{p_D}|D^{\text{ab/tor}}/pD^{\text{ab/tor}}| - 1$, we write $d_D$ for this integer. The absolute ramification index $e_{\mathfrak{p}}$ can be characterised as $d_D/f_D$, we write $e_D$ for this integer. \par 
For assertion (v)-(vii), let $H \subset D^2$ be an open subgroup and write $\widetilde{H}$ for the inverse image of $H$ in $D^3$. Then it holds that $\widetilde{H}^1$ is isomorphic to $G_L^{\text{ab}}$ for some finite metabelian extension $L$ of $K_{\mathfrak{p}}$. We apply (i)-(iv) to $\widetilde{H}^1$, and set $e_H := e_{\widetilde{H}^1}$. Then the inertia subgroup of $I_{\mathfrak{p}_2}$ can be characterised as the intersection 
$$
\bigcap_H H
$$
where $H$ ranges over all open normal subgroups of $D^2$ such that $e_{\widetilde{H}^1} = e_D$. We write $I(D^2)$ for this group. Moreover, $W_{\mathfrak{p}_2}$ can be characterised as the unique pro-$p_D$-Sylow subgroup of $I(D^2)$. We write $W(D^2)$ for this group. The Frobenius element $\text{Frob}_{\mathfrak{p}}$ can be characterised as the unique element of $D^{\text{unr}} := D^2/I(D^2)$ whose action on $I(D^2)/W(D^2) \cong \widehat{\mathbb{Z}}^{(p_D')}(1)$ is given by multiplication by $p_D^{f_D}$ (see the discussion after Proposition 7.5.1 in \cite{NSW}). We write $\text{Frob}(D)$ for this element. \par 
For assertion (viii), consider the following commutative diagram with exact rows (arising from the local class field theory for $K_{\mathfrak{p}}$):
$$
\begin{tikzcd}
	1 & {\mathcal{O}_\mathfrak{p}^{\times}} & {K_{\mathfrak{p}}^{\times}} & {\mathbb{Z}} & 1 \\
	1 & {\mathcal{O}_{\mathfrak{p}}^{\times}} & {\widehat{K_{\mathfrak{p}}^{\times}}} & {\widehat{\mathbb{Z}}} & 1 \\
	1 & {I_{\mathfrak{p}_1}} & {D_{\mathfrak{p}_1}} & {D_{\mathfrak{p}}^{\text{unr}}} & 1
	\arrow[from=1-1, to=1-2]
	\arrow[from=1-2, to=1-3]
	\arrow[no head, from=1-2, to=2-2]
	\arrow[shift right, no head, from=1-2, to=2-2]
	\arrow["{\text{ord}_{\mathfrak{p}}}", from=1-3, to=1-4]
	\arrow[hook, from=1-3, to=2-3]
	\arrow[from=1-4, to=1-5]
	\arrow[hook, from=1-4, to=2-4]
	\arrow[from=2-1, to=2-2]
	\arrow[from=2-2, to=2-3]
	\arrow["\wr", from=2-2, to=3-2]
	\arrow[from=2-3, to=2-4]
	\arrow["{\theta_{\mathfrak{p}}}", from=2-3, to=3-3]
	\arrow[from=2-4, to=2-5]
	\arrow["\wr", from=2-4, to=3-4]
	\arrow[from=3-1, to=3-2]
	\arrow[from=3-2, to=3-3]
	\arrow[from=3-3, to=3-4]
	\arrow[from=3-4, to=3-5]
\end{tikzcd}
$$
where $\text{ord}_{\mathfrak{p}}$ is the valuation on $K_{\mathfrak{p}}^{\times}$, and $\theta_{\mathfrak{p}}$ is the local reciprocity isomorphism. In particular, we write $\text{Frob}(D)^{\mathbb{Z}}$ for the discrete infinite cyclic subgroup of $D^{\text{unr}}$ generated by $\text{Frob}(D)$, hence $\text{Frob}(D)^{\mathbb{Z}}$ coincide with the image of $\mathbb{Z}$ through the composite
$$
\mathbb{Z} \hookrightarrow \widehat{\mathbb{Z}} \xrightarrow{\sim} D_{\mathfrak{p}}^{\text{unr}} \xrightarrow{\sim} D^{\text{unr}}
$$
where the first two maps are obtained from the diagram above, and the final isomorphism arises from the definition of $D^{\text{unr}}$ defined above. The multiplicative group $K_{\mathfrak{p}}^{\times}$ can be characterised as the fibre product
$$
D^1 \times_{D^{\text{unr}}} \text{Frob}(D)^{\mathbb{Z}}.
$$
We write $k^{\times}(D^1)$ for this group. Moreover, let $H \subset D^1$ be an open subgroup, we write $\widetilde{H}$ for the inverse image of $H$ in $D$. Hence $\widetilde{H}^3$ can be identified with $G_F^3$ for some finite abelian extension $F/K_{\mathfrak{p}}$. We can apply (i) - (viii) to $\widetilde{H}^3$, and obtain 
$$
k^{\times}(\widetilde{H}^1) := \widetilde{ H}^1 \times_{\widetilde{H}^{\text{unr}}} \text{Frob}(\widetilde{H}^3)^{\mathbb{Z}}.
$$
The natural inclusion $\widetilde{H} \hookrightarrow D$ determines an injective map
$$
k^{\times}(D^1) \hookrightarrow k^{\times}(\widetilde{H}^1)
$$
induced by the transfer map $\text{Ver}: D^1 \to \widetilde{H}^1$.
By abusing notion, we shall write $k^{\times}(H) := k^{\times}(\widetilde{H}^1)$. Hence we can characterise the group of roots of unity contained in $K_{\mathfrak{p}}^{\text{ab}}$ as:
$$
\mu(D^1) := \varinjlim_H ~k^{\times}(H)_{\text{tor}},
$$
where $H$ ranges over all open subgroups of $D^1$, and the transition maps are the natural inclusions. We write
$$
\Lambda(D^1) := \varprojlim_n ~\mu(D)[n]
$$
for the cyclotome of $D$.
\end{proof}
\begin{rem}
If $k$ is a $p$-adic local field for some prime number $p$. Then we have a similar group-theoretic reconstruction of the following objects starting from $G_k^4$ as in Theorem 2.6: \par 
\begin{itemize}
    \item The residue characteristic $p_k$ of $k$.
    \item The inertia degree $f_k$ of $k$.
    \item The degree $d_k:= [k:\mathbb{Q}_{p_k}]$.
    \item The absolute ramification index $e_k$ of $k$.
    \item The inertia subgroup $I_k^{(2)}$ of $G_k^2$.
    \item The wild inertia subgroup of $I_k^{(2)}$.
    \item The Frobenius element $\text{Frob}_k$, i.e. the topological generator of $G_k^{\text{unr}}$ corresponding to the Frobenuis automorphism.
    \item The multiplicative group $k^{\times}$.
    \item The group of roots of unity $\mu(k^{\text{ab}})$ and the cyclotome $\Lambda(k^{\text{ab}})$, viewed as $G_k^{\text{ab}}$-modules.
\end{itemize}
\end{rem}
\section{Local-Global Synchronisation}
In this section, we prove a result on the \textbf{local-global synchronisation of the cyclotomes}, and use this result in order to construct \textbf{Kummer containers}. \par 
Recall that for a number field $K$, the group of finite ideles can be constructed as follows (e.g. see Chapter VI section 1 \cite{ANT}): If $S \subset \mathscr{P}_K^{\text{fin}}$ is a finite set, then 
$$
\mathbb{I}_S^{\text{fin}} =  \prod_{v \in S} K_v^{\times} \times \prod_{v \notin S}\mathcal{O}_v^{\times}
$$
and 
$$
\mathbb{I}_{K}^{\text{fin}} = \bigcup_{S \subset \mathscr{P}_K^{\text{fin}}} \mathbb{I}_S^{\text{fin}}
$$ where $S$ ranges over all finite subsets of $\mathscr{P}_{K}^{\text{fin}}$. \\
Now let $G$ be a profinite group of $\text{GSC}^5$-type. It follows from Proposition 1.5 in \cite{ST1}, that we have the following commutative diagram
$$
\begin{tikzcd}
\widetilde{\text{Dec}}(G^1) \arrow[r,"\sim"] \arrow[d,"\wr"] & \mathscr{P}_{K_1}^{\text{fin}} \arrow[d,"\wr"] \\ 
\text{Dec}(G^1) \arrow[r,"\sim"] & \mathscr{P}_K^{\text{fin}}
\end{tikzcd}
$$
where the vertical arrows are natural bijections and the horizontal arrows are induced by $\alpha_1$, c.f. Definition 2.1.
\begin{defn}
Let $G$ be a profinite group of $\text{GSC}^5$-type, and let $v \in \text{Dec}(G^1)$. Let $D \in \widetilde{\text{Dec}}(G^3)$ whose image in $\text{Dec}(G^1)$ is $v$. We define
\begin{itemize}
    \item $p_v := p_D$.
    \item $d_v := d_D$. 
    \item $e_v := e_D$.
    \item $f_v := f_D$.
    \item $k^{\times}(v) := k^{\times}(D^1)$.
    \item $\mathcal{O}^{\times}(v) := \text{im}(I(D^2) \hookrightarrow D^2 \twoheadrightarrow D^1)$.
\end{itemize}
It follows from Theorem 2.6 that the objects listed above are well-defined and are independent from the choice of $D$ above $v$.
\end{defn}
\begin{defn}
Let $G$ be a profinite group of $\text{GSC}^5$-type and let $S \subset \text{Dec}(G^1)$ be a finite subset. We write
$$
\mathbb{I}_S^{\text{fin}}(G^1) := \prod_{v \in S} k^{\times}(v) \times \prod_{v \notin S} \mathcal{O}^{\times}(v).
$$
Moreover, we write
$$
\mathbb{I}^{\text{fin}}(G^1) := \bigcup_{S} \mathbb{I}_S^{\text{fin}}(G^1)
$$
where the union is taken over all finite subset $S \subset \text{Dec}(G^1)$.
\end{defn}
\begin{defn}
Let $G$ be a profinite group of $\text{GSC}^6$-type and let $H \subset G^1$ be an open subgroup. We write $\widetilde{H}$ for the inverse image of $H$ in $G$, and
$$
\mathbb{I}^{\text{fin}}(H) := \mathbb{I}^{\text{fin}}(\widetilde{H}^1).
$$
\end{defn}
\begin{rem}
The group $\mathbb{I}^{\text{fin}}(H)$ in Definition 3.3 is well-defined since $\widetilde{H}^5$ is of $\text{GSC}^5$-type.
\end{rem}
\begin{defn}
Let $G$ be a profinite group of $\text{GSC}^6$-type. We shall write (c.f. Proposition 3.7 (4) in \cite{Ho1})
$$
\mu(G^1) := \varinjlim_H ~\text{ker}(\mathbb{I}^{\text{fin}}(H) \to \widetilde{H}^1)_{\text{tor}}
$$
where $H$ ranges over all open subgroups of $G^1$ and the transition maps in the direct limit are obtained as follows: the map $\mathbb{I}^{\text{fin}}(H) \to \widetilde{H}^1$ is obtained by considering the natural inclusions
$$
\mathcal{O}^{\times}(v) \hookrightarrow k^{\times}(v) \hookrightarrow D^1
$$
for some $D \in \widetilde{\text{Dec}}(\widetilde{H}^3)$ whose image in $\text{Dec}(H)$ is $v$. In particular, $\mathbb{I}^{\text{fin}}(H) \to \widetilde{H}^1$ is the (restriction of) global Artin reciprocity map. \par 
Let $H' \subset H$ be an open subgroup. We write $\widetilde{H'}$ and $\widetilde{H}$ for the inverse images of $H'$ and $H$ in $G$. The transfer map
$$
\text{Ver}: \widetilde{H}^1 \to \widetilde{H'}^1
$$
induces the following commutative diagram
$$
\begin{tikzcd}
    	{\mathbb{I}^{\text{fin}}(H)} & {\widetilde{H}^{1}} \\
	{\mathbb{I}^{\text{fin}}(H')} & {\widetilde{H'}^{1}}.
	\arrow[from=1-1, to=1-2]
	\arrow[from=1-1, to=2-1]
	\arrow["{\text{Ver}}", from=1-2, to=2-2]
	\arrow[from=2-1, to=2-2]
\end{tikzcd}
$$
In particular, we obtain a natural map
$$
\text{ker}(\mathbb{I}^{\text{fin}}(H) \to \widetilde{H}^1)_{\text{tor}} \to \text{ker}(\mathbb{I}^{\text{fin}}(H') \to \widetilde{H'}^1)_{\text{tor}}
$$
induced by the natural inclusion $H' \hookrightarrow H$, and we shall set these maps as the transition maps in the definition of $\mu(G^1)$. \par
Moreover, we write
$$
\Lambda(G^1) := \varprojlim_n ~\mu(G^1)[n]
$$
where $n$ ranges over all positive integers.
\end{defn}

\begin{defn}
Let $G$ be a profinite group of $\text{GSC}^7$-type. Let $H \subset G^1$ be an open subgroup, and write $\widetilde{H}$ for its inverse image in $G$. Write
$$
\mu(H) := \mu(\widetilde{H}^1)
$$
and
$$
\Lambda(H) := \Lambda(\widetilde{H}^1)
$$
(c.f. Definition 3.5).
\end{defn}
\begin{thm}[Local-Global Synchronisation]
Let $G$ be a profinite group of $\text{GSC}^6$-type, and let $D \in \widetilde{\text{Dec}}(G^4)$. Then there exists $D$-equivariant isomorphisms
$$
\mu(G^1) \xrightarrow{\sim} \mu(D^1), ~\text{and}~\Lambda(G^1) \xrightarrow{\sim} \Lambda(D^1),
$$
where the $D$-action on $\mu(G^1)$ and $\Lambda(G^1)$ is obtained by restricting the cyclotomic character $\chi: G^1 \to \widehat{\mathbb{Z}}^{\times}$ (which can be reconstructed from $G^3$ by Theorem 1.26 in \cite{ST1}) to $D^1$.
\end{thm}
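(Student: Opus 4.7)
The plan is to identify both cyclotomes with concrete groups of roots of unity attached to the number field $K$ and its completion $K_{\mathfrak{p}}$ respectively, and then to exhibit the $D$-equivariant isomorphism as the natural map on roots of unity induced by the embedding $K^{\text{sep}} \hookrightarrow \overline{K_{\mathfrak{p}}}$ determined by $D$. By Theorem 2.6(ix) the local side $\mu(D^1)$ is already canonically identified with $\mu(K_{\mathfrak{p}}^{\text{ab}})$, with the $D^1$-action given by the local cyclotomic character; so the substance of the proof lies in analysing $\mu(G^1)$ and in building the comparison map.

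For the global side, fix an open $H \subset G^1$ corresponding to a finite abelian extension $L/K$. Then $\mathbb{I}^{\text{fin}}(H)$ is identified with $\mathbb{I}_L^{\text{fin}}$ and $\mathbb{I}^{\text{fin}}(H) \to \widetilde{H}^1$ is the restriction of the global Artin reciprocity map for $L$. By global class field theory, the kernel of this restricted map is the group $L^{\times,+}$ of elements of $L^{\times}$ that are totally positive at every real place, embedded diagonally in the finite ideles; its torsion subgroup is therefore the group of totally positive roots of unity in $L$. Whenever $L$ contains $\mu_n$ for some $n \geq 3$, $L$ is automatically totally imaginary and this torsion is all of $\mu(L)$; such $H$ form a cofinal system in the diagram of Definition 3.5. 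Moreover, the transition maps in that definition, coming from the transfer $\text{Ver}:\widetilde{H}^1 \to \widetilde{H'}^1$, are by standard compatibility of transfer and the natural embedding of ideles $\mathbb{I}_L \to \mathbb{I}_{L'}$ precisely the inclusions $\mu(L) \hookrightarrow \mu(L')$. Hence $\mu(G^1) = \varinjlim_L \mu(L) = \mu(K^{\text{ab}})$.

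I then construct the map $\mu(G^1) \to \mu(D^1)$ levelwise. A choice of $D \in \widetilde{\text{Dec}}(G^4)$ determines, compatibly in $H$, a unique prime $w$ of $L$ above $\mathfrak{p}$ and thus an embedding $L \hookrightarrow L_w$; the $w$-th projection $\mathbb{I}_L^{\text{fin}} \to L_w^{\times} = k^{\times}(w)$ restricts to the natural inclusion $\mu(L) \hookrightarrow \mu(L_w)$ on torsion. These inclusions are compatible with the transition maps on both sides, so they assemble into a map $\mu(G^1) \to \mu(D^1)$, which is intrinsically the natural inclusion $\mu(K^{\text{ab}}) \hookrightarrow \mu(K_{\mathfrak{p}}^{\text{ab}})$, an isomorphism since both sides are $\mathbb{Q}/\mathbb{Z}$ and $\mu_n$ maps onto $\mu_n$ at each finite level. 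The $D$-equivariance then reduces to the classical fact that the restriction of the global cyclotomic character $\chi: G^1 \to \widehat{\mathbb{Z}}^{\times}$ to the decomposition group $D^1$ agrees with the local cyclotomic character of $K_{\mathfrak{p}}$. The $\Lambda$-statement follows immediately from the $\mu$-statement by applying the exact functor $\varprojlim_n (-)[n]$.

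The main obstacle I anticipate is ensuring that every step of the above construction is genuinely group-theoretic, i.e.\ depends only on the abstract $G$ together with its $D$, and not on any auxiliary choice of underlying number field. In particular, one must verify that the identification of the kernel of the Artin reciprocity map with $L^{\times,+}$, and its compatibility with the transfer-induced transition maps of Definition 3.5, can be expressed purely through the data furnished by Theorem 2.6 (the local multiplicative groups $k^{\times}(v)$ and unit groups $\mathcal{O}^{\times}(v)$) and by the description of decomposition groups recalled from \cite{ST1}; this is the point at which the $\text{GSC}^6$-hypothesis (which guarantees access to both the global ideles and their images in $\widetilde{H}^1$ for each open $H \subset G^1$) is indispensable.
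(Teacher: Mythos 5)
Your proposal is correct and follows essentially the same route as the paper's proof: both build the comparison map $t\colon\mu(G^1)\to\mu(D^1)$ from the component-wise projection $\mathbb{I}^{\text{fin}}(G^1)\twoheadrightarrow k^{\times}(v)$ restricted to the torsion of the kernel of the reciprocity map, passing to the limit over open $H\subset G^1$, and both conclude by observing that an injective endomorphism of $\mathbb{Q}/\mathbb{Z}$ is automatically surjective. The difference is one of packaging: where the paper simply cites Lemma~3.6(iii) and Prop.~3.7(4) of Hoshi for the identification of $\mu(G^1)$ and for injectivity, you unpack these citations, computing the kernel of the restricted Artin map directly (via totally positive elements, cofinality of the totally imaginary $L$) and recognising $t$ as the natural inclusion $\mu(K^{\text{ab}})\hookrightarrow\mu(K_{\mathfrak{p}}^{\text{ab}})$. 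One small caveat: the bald statement that $\ker(\mathbb{I}_L^{\text{fin}}\to G_L^{\text{ab}})$ equals the diagonally embedded $L^{\times,+}$ is not quite right as stated — the kernel is $\mathbb{I}_L^{\text{fin}}$ intersected with $L^{\times}$ times the \emph{closure} of the image of $(L_{\infty}^{\times})^{\circ}$, which is strictly larger than $L^{\times}\cdot(L_{\infty}^{\times})^{\circ}$ itself. However, since you only use the torsion of this kernel and the excess contributed by the closure of the connected component is torsion-free inside the finite ideles, your argument survives once this is spelled out; this is exactly the technical point handled by the cited Prop.~3.7(4) of [Ho1], so in substance you are re-deriving the paper's citation rather than taking a different path.
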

\begin{proof}
Consider the composite
$$
\text{ker}(\mathbb{I}^{\text{fin}}(G^1) \to G^1) \hookrightarrow \mathbb{I}^{\text{fin}}(G^1) \twoheadrightarrow k^{\times}(v) \xrightarrow[]{\sim} k^{\times}(D^1)
$$
where $\mathbb{I}^{\text{fin}}(G^1) \twoheadrightarrow k^{\times}(v)$ is defined to be the component-wise projection. The above composite determines a map
$$
\text{ker}(\mathbb{I}^{\text{fin}}(G^1) \to G^1)_{\text{tor}} \to k^{\times}(D)_{\text{tor}}. 
$$
On the other hand, for each open subgroup $H \subset G^1$, the intersection $H \cap D^1$ is an open subgroup of $D^1$, we write $D_H$ for this group. We write $\widetilde{D}_H$ for the inverse image of $D_H$ in $D$. Hence we can apply Theorem 2.5 to $\widetilde{D}_H^3$ in order to construct $k^{\times}(\widetilde{D}_H)$, and obtain
$$
\varinjlim_H ~ \text{ker}(\mathbb{I}^{\text{fin}}(H) \to \widetilde{H}^{1})_{\text{tor}} \to \varinjlim_H~ k^{\times}(\widetilde{D}_H^1)_{\text{tor}}. 
$$
Here, the inductive limit is taken over all open subgroups of $G^1$, where the left hand side is $\mu(G^1)$ and the right hand side is $\mu(D)$. Hence we obtain a map
$$
t: \mu(G^1) \to \mu(D^1).
$$
We claim that the map $t$ is an isomorphism. The injectivity of $t$ follows immediately from the proof of Lemma 3.6 (iii) in \cite{Ho1}, the surjectivity of $t$ follows from the fact that every injective endomorphism of $\mathbb{Q}/\mathbb{Z}$ is surjective. Moreover, the isomorphism $t$ is equivariant w.r.t the action given by $\chi|_D$ where $\chi: G^1 \to \widehat{\mathbb{Z}}^{\times}$ is the cyclotomic character (which can be reconstructed from $G^3$ by Theorem 1.26 in \cite{ST1}) and $\chi|_D$ is the restriction of $\chi$ to $D^1$. Hence we also have a $D$-equivariant isomorphism $\Lambda(G^1) \xrightarrow{\sim} \Lambda(D^1)$
\end{proof}
\begin{defn}[c.f. Definition 3.9 in \cite{Ho1}]
Let $K$ be a number field. We define the \textbf{Kummer container} associated to $K$ to be the fibre product in the following diagram:
$$
\begin{tikzcd}
& \mathbb{I}_K^{\text{fin}} \arrow[d,hookrightarrow]\\
(K^{\times})^{\wedge} \arrow[r,hookrightarrow] & \prod_{\mathfrak{p} \in \mathscr{P}_K^{\text{fin}}} (K_{\mathfrak{p}}^{\times})^{\wedge}
\end{tikzcd}
$$
where the horizontal injection is defined in Theorem 9.1.11 (i) in \cite{NSW}, and denote it by $\mathcal{H}^{\times}(K)$. Thus,
$$
\mathcal{H}^{\times}(K) = (K^{\times})^{\wedge} \times_{\prod_{\mathfrak{p} \in \mathscr{P}_K^{\text{fin}}} (K_{\mathfrak{p}}^{\times})^{\wedge}} \mathbb{I}_K^{\text{fin}}.
$$
Moreover, we write
$$
\mathcal{H}_{\times}(K) := \mathcal{H}^{\times}(K) \cup \{0\}.
$$
\end{defn}
In our case, the Kummer sequence
$$
1 \to \mu_n(K_m) \to K_m^{\times} \xrightarrow{(-)^n} K_m^\times
$$
is not right exact for any positive integer $m$ and integers $n \geq 2$. However, we can still construct a group-theoretic version of the Kummer container in our setting using the following Lemma.
\begin{lem}
Let $K$ be a field of characteristic $0$. Let $m \geq 2$ be an integer. Then for any $n \in \mathbb{Z}_{\geq 1}$, we have a natural isomorphism:
$$
H^1(G_K^m,\mu_n) \xrightarrow{\sim} H^1(G_K,\mu_n).
$$
In particular, $H^1(G_K^m,\mu_n) \xrightarrow{\sim} K^{\times}/(K^{\times})^n$ for $m \geq 2$.
\end{lem}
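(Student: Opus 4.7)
The plan is to compute both sides independently via Hilbert 90 and identify them with $K^{\times}/(K^{\times})^n$, exploiting that $K_m$ is already large enough to contain every Kummer extension of $K$ of exponent $n$ when $m \geq 2$.

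First I would note that $\mu_n$ is a trivial $G_{K_m}$-module for $m \geq 1$: the $G_K$-action on $\mu_n$ factors through the cyclotomic character $G_K \twoheadrightarrow G_K^{\mathrm{ab}} \to (\mathbb{Z}/n)^{\times}$, hence through $G_K^1$, so $G_K^{[m]} \subseteq G_K^{[1]}$ acts trivially. In particular $H^{i}(G_K^m,\mu_n)$ is well defined. The second preliminary observation is that for $m \geq 2$, every $a \in K^{\times}$ admits an $n$-th root in $K_m$. Indeed, $K(\mu_n,a^{1/n})$ is abelian over the abelian extension $K(\mu_n)/K$, hence metabelian over $K$, and therefore contained in $K_2 \subseteq K_m$.

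Next I would apply the long exact cohomology sequence of $G_K^m = \mathrm{Gal}(K_m/K)$ to the short exact sequence of $G_K^m$-modules
$$
1 \to \mu_n \to K_m^{\times} \xrightarrow{(-)^n} (K_m^{\times})^n \to 1.
$$
Hilbert's Theorem 90 for the (possibly infinite) Galois extension $K_m/K$ gives $H^1(G_K^m, K_m^{\times}) = 0$, so the long exact sequence collapses to
$$
K^{\times} \xrightarrow{(-)^n} \bigl((K_m^{\times})^n\bigr)^{G_K^m} \twoheadrightarrow H^1(G_K^m,\mu_n) \to 0.
$$
By the preliminary observation, $K^{\times} \subseteq (K_m^{\times})^n$, so $((K_m^{\times})^n)^{G_K^m} = (K_m^{\times})^n \cap K^{\times} = K^{\times}$, and therefore $H^1(G_K^m,\mu_n) \cong K^{\times}/(K^{\times})^n$.

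The same computation carried out for the full $G_K$ acting on $K^{\mathrm{sep}\,\times}$ is the classical Kummer isomorphism and identifies $H^1(G_K,\mu_n)$ with $K^{\times}/(K^{\times})^n$. Functoriality of the long exact sequence under the inflation map $K_m^{\times} \hookrightarrow K^{\mathrm{sep}\,\times}$ shows that the inflation $H^1(G_K^m,\mu_n) \to H^1(G_K,\mu_n)$ is compatible with these two identifications, and is therefore an isomorphism. There is no real obstacle in this argument; the only point requiring any care is checking that the containment $K^{\times} \subseteq (K_m^{\times})^n$ truly needs the hypothesis $m \geq 2$ (the case $m=1$ fails because $K_1 = K^{\mathrm{ab}}$ need not contain $K(\mu_n,a^{1/n})$), which is precisely the role played by the assumption in the statement.
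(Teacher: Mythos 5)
Your proof is correct, and it takes a genuinely different route from the paper's. The paper compares the inflation--restriction sequences for $G_K^m$ and $G_K$ with coefficients in $\mu_n$, and the key step is the observation that $G_K^{[m,1]}$ and $G_K^{[1]}$ have the same abelianisation for $m \geq 2$, so the inflation map $\mathrm{Hom}(G_K^{[m,1]},\mu_n)^{G_K^{\mathrm{ab}}} \to \mathrm{Hom}(G_K^{[1]},\mu_n)^{G_K^{\mathrm{ab}}}$ is an isomorphism and the middle inflation then follows by a diagram chase. Your argument instead works directly with the Kummer sequence $1 \to \mu_n \to K_m^{\times} \to (K_m^{\times})^n \to 1$ of $G_K^m$-modules and Hilbert 90 for $K_m/K$, and localises the role of $m \geq 2$ in the concrete field-theoretic fact that $K(\mu_n, a^{1/n})$ is metabelian over $K$, hence $K^{\times} \subseteq (K_m^{\times})^n$. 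Both proofs turn on two-step solvability in essentially equivalent ways; yours is more explicit in that it identifies $H^1(G_K^m,\mu_n)$ with $K^{\times}/(K^{\times})^n$ outright rather than abstractly matching it with the target group, at the modest cost of invoking Hilbert 90 for the possibly infinite extension $K_m/K$ and a functoriality check that the resulting identification is indeed realised by inflation. Either approach is acceptable.
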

\begin{proof}
Consider the inflation-restriction sequence (e.g. see Corollary 7.2.5 in \cite{RZ}):
$$
1 \to H^1(G_K^{\text{ab}},\mu_n^{G_K^{[m,1]}}) \to H^1(G_K^m,\mu_n) \to H^1(G_K^{[m,1]},\mu_n)^{G_K^{\text{ab}}} \to H^2(G_K^{\text{ab}}, \mu_n^{G_K^{[m,1]}}).
$$
Since $K^{\text{ab}} = K_1$ contains all $n$-th roots of unity for $n\geq 1$, the action of $G_K^{[m,1]}$ on $\mu_n$ is trivial. Hence we have the exact sequence
$$
1 \to H^1(G_K^{\text{ab}},\mu_n) \to H^1(G_K^m,\mu_n) \to \text{Hom}(G_K^{[m,1]},\mu_n)^{G_K^{\text{ab}}} \to H^2(G_K^{\text{ab}},\mu_n)
$$
which fits in the following commutative diagram with exact rows:
$$
\begin{tikzcd}
	1 & {H^1(G_K^{\text{ab}},\mu_n)} & {H^1(G_K^m,\mu_n)} & {\text{Hom}(G_K^{[m,1]},\mu_n)^{G_K^{\text{ab}}}} & {H^2(G_K^{\text{ab}},\mu_n)} \\
	1 & {H^1(G_K^{\text{ab}},\mu_n)} & {H^1(G_K,\mu_n)} & {\text{Hom}(G_K^{[1]},\mu_n)^{G_K^{\text{ab}}}} & {H^2(G_K^{\text{ab}},\mu_n)}
	\arrow[from=1-1, to=1-2]
	\arrow[from=1-2, to=1-3]
	\arrow[no head, from=1-2, to=2-2]
	\arrow[shift right, no head, from=1-2, to=2-2]
	\arrow[from=1-3, to=1-4]
	\arrow["{\text{inf}}", from=1-3, to=2-3]
	\arrow[from=1-4, to=1-5]
	\arrow["{\text{inf}}", from=1-4, to=2-4]
	\arrow[shift right, no head, from=1-5, to=2-5]
	\arrow[no head, from=1-5, to=2-5]
	\arrow[from=2-1, to=2-2]
	\arrow[from=2-2, to=2-3]
	\arrow[from=2-3, to=2-4]
	\arrow[from=2-4, to=2-5]
\end{tikzcd}
$$
where the right vertical inflation map
$$
\text{Hom}(G_K^{[m,1]},\mu_n)^{G_K^{\text{ab}}} \to \text{Hom}(G_K^{[1]},\mu_n)^{G_K^{\text{ab}}}
$$
is an isomorphism since $G_K^{[m,1]}$ and $G_K^{[1]}$ have isomorphic abelianisations. Thus, we can conclude that the left vertical inflation map is an isomorphism. By Kummer theory, we can conclude that
$$
H^1(G_K^m,\mu_n) \xrightarrow{\sim} K^{\times}/(K^{\times})^n. 
$$
\end{proof}
Now let $G$ be a profinite group of $\text{GSC}^6$-type, and let $D \in \widetilde{\text{Dec}}(G^4)$. Moreover, let $v \in \text{Dec}(G^1)$ be the image of $D$ via the surjection $\widetilde{\text{Dec}}(G^4) \twoheadrightarrow \text{Dec}(G^1)$. Let $D' \in \widetilde{\text{Dec}}(G^4)$ be an element whose image in $\text{Dec}(G^1)$ is $v$. Then the action of $G^4$ on $\widetilde{\text{Dec}}(G^4)$ induces a natural isomorphism
$$
H^1(D,\Lambda(D^1)) \xrightarrow{\sim} H^1(D',\Lambda((D')^1)).
$$
In this case, we may write
$$
H^1(v,\Lambda(v)) := H^1(D,\Lambda(D^1)).
$$
By Theorem 2.6, $\Lambda(D^1)$ is independent from the choice of $D$ whose image in $\text{Dec}(G^1)$ is $v$, hence we may write $\Lambda(v) := \Lambda(D^1)$. Thus, $H^1(v,\Lambda(D^1))$ is well-defined up to canonical isomorphism. \par 
By Theorem 3.7 and Theorem 9.1.11 (i) in \cite{NSW}, we have an injective homomorphism
$$
\dagger:H^1(G^4,\Lambda(G^1)) \hookrightarrow \prod_{v \in \text{Dec}(G^1)} H^1(v,\Lambda(v)).
$$
\begin{defn}
Let $G$ be a profinite group of $\text{GSC}^6$-type. We write
$$
\mathcal{H}^{\times}(G^1) \subset H^1(G^4,\Lambda(G^1))
$$
for the inverse image via the injective map $\dagger$ of $\mathbb{I}^{\text{fin}}(G^1)$, via the following composite map
$$
\mathbb{I}^{\text{fin}}(G^1) \hookrightarrow \prod_{v \in \text{Dec}(G^1)} k^{\times}(v) \hookrightarrow \prod_{v \in \text{Dec}(G^1)} H^1(v,\Lambda(v)),
$$
where the injectivity of 
$$
\prod_{v \in \text{Dec}(G^1)} k^{\times}(v) \hookrightarrow \prod_{v \in \text{Dec}(G^1)} H^1(v,\Lambda(v))
$$
follows from the injectivity of the Kummer map (c.f. Lemma 1.3 (x) in \cite{Ho1})
$$
\text{Kmm}_v:k^{\times}(v) \hookrightarrow H^1(v,\Lambda(v)).
$$
We call $\mathcal{H}^{\times}(G^1)$ the \textbf{Kummer container} associated to $G^1$. Moreover, we write 
$$
\mathcal{H}_{\times}(G^1) := \mathcal{H}^{\times}(G^1) \cup \{0\}.
$$
\end{defn}
\begin{rem}
In order to construct $\mathcal{H}^{\times}(G^1)$, one needs to reconstruct $\Lambda(G^1)$ and $\Lambda(D^1)$. Hence it follows from Theorem 2.6 (together with the various Definitions involved) that $\mathcal{H}^{\times}(G^1)$ can be group-theoretically reconstructed from a profinite group $G$ of $\text{GSC}^6$-type.
\end{rem}
\begin{prop}
Let $G$ be a profinite group of $\text{GSC}^7$-type, and let $H \subset G^1$ be an open subgroup. We write $\widetilde{H}$ for the inverse image of $H$ in $G$, hence $\widetilde{H}^6$ is a profinite group of $\text{GSC}^6$-type. Then we have a commutative diagram:
$$
\begin{tikzcd}
	{\mathcal{H}^{\times}(G^1)} & {\mathcal{H}^{\times}(K)} \\
	{\mathcal{H}^{\times}(\widetilde{H}^1)} & {\mathcal{H}^{\times}(L)} 
	\arrow["\sim", from=1-1, to=1-2]
	\arrow[hook, from=1-1, to=2-1]
	\arrow[hook, from=1-2, to=2-2]
	\arrow["\sim", from=2-1, to=2-2]
\end{tikzcd}
$$
where $L/K$ is the finite abelian extension determined by $H \subset G^1$, the horizontal arrows are isomorphisms, and the vertical arrows are injective.
\end{prop}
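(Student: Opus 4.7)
The plan is to identify each of the four objects in the diagram with its field-theoretic counterpart and then to verify that the vertical arrows become the natural ones under these identifications. The identifications are driven by Kummer theory (in the form of Lemma 3.9) together with the local-global synchronisation of cyclotomes (Theorem 3.7) and the reconstruction of local invariants (Theorem 2.6).

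For the top horizontal isomorphism, I would argue as follows. By Lemma 3.9, for every $n\geq 1$ we have a natural isomorphism $H^1(G^4,\mu_n)\xrightarrow{\sim} K^{\times}/(K^{\times})^n$; passing to the inverse limit over $n$ and using the identification $\Lambda(G^1)\cong\Lambda(K)$ yields $H^1(G^4,\Lambda(G^1))\xrightarrow{\sim}(K^{\times})^{\wedge}$. At each local place $v\in\text{Dec}(G^1)$, Theorem 2.6 identifies $k^{\times}(v)$ with $K_{\mathfrak{p}_v}^{\times}$, and local Kummer theory together with Theorem 3.7 identifies $H^1(v,\Lambda(v))$ with $(K_{\mathfrak{p}_v}^{\times})^{\wedge}$ compatibly with the Kummer maps $\text{Kmm}_v$. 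Under these identifications the injection $\dagger$ corresponds to the diagonal $(K^{\times})^{\wedge}\hookrightarrow\prod_{\mathfrak{p}}(K_{\mathfrak{p}}^{\times})^{\wedge}$ of Theorem 9.1.11 (i) in \cite{NSW}, and $\mathbb{I}^{\text{fin}}(G^1)$ corresponds to $\mathbb{I}_K^{\text{fin}}$ by Definition 3.2. Taking preimages gives $\mathcal{H}^{\times}(G^1)\xrightarrow{\sim}\mathcal{H}^{\times}(K)$. The bottom horizontal isomorphism follows by applying the same argument to $\widetilde{H}^6$, which is of $\text{GSC}^6$-type by Remark 2.2 (ii).

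For the vertical arrows, on the field side the map $\mathcal{H}^{\times}(K)\hookrightarrow\mathcal{H}^{\times}(L)$ is induced by the injective inclusions $K^{\times}\hookrightarrow L^{\times}$ and $\mathbb{I}_K^{\text{fin}}\hookrightarrow\mathbb{I}_L^{\text{fin}}$, and is injective as a map of fibre products. On the group side, the vertical map is the one induced by the restriction map $H^1(G^4,\Lambda(G^1))\to H^1(\widetilde{H}^4,\Lambda(\widetilde{H}^1))$ on cohomology, together with the canonical inclusion $\mathbb{I}^{\text{fin}}(G^1)\hookrightarrow\mathbb{I}^{\text{fin}}(\widetilde{H}^1)$ on ideles (sending each $v$-component diagonally to the components over $w\mid v$). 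Commutativity of each square then follows from the functoriality of Kummer theory (globally via Lemma 3.9, locally via Theorem 2.6) combined with Theorem 3.7 applied to both $G^1$ and $\widetilde{H}^1$; injectivity of the left vertical arrow then follows from the commutativity and the injectivity of the right vertical arrow.

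The main obstacle will be verifying the compatibility of Theorem 3.7 under passage to the open subgroup $\widetilde{H}$: namely, that the isomorphism $\mu(G^1)\xrightarrow{\sim}\mu(D^1)$ and its counterpart for $\widetilde{H}^1$ and the decomposition subgroups lying over those of $G^1$ fit into a commutative square with the restriction maps. Once this functoriality is in hand, the assertion that restriction carries $\mathcal{H}^{\times}(G^1)$ into $\mathcal{H}^{\times}(\widetilde{H}^1)$ (i.e.\ that ``$\mathbb{I}^{\text{fin}}$-classes'' go to ``$\mathbb{I}^{\text{fin}}$-classes'') and coincides under the horizontal identifications with the inclusion-induced map on Kummer containers reduces to a routine diagram chase at each finite level $\mu_n$.
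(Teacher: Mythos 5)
Your proposal takes essentially the same approach as the paper: identify each corner of the square with its field-theoretic counterpart via Lemma 3.9, Theorem 2.6, Theorem 3.7 and Definition 3.2, construct the right vertical arrow from the field inclusion $K\hookrightarrow L$, construct the left vertical arrow from the corresponding cohomological restriction and idele maps, and deduce injectivity of the left arrow from the commutativity of the square and the injectivity of the right arrow. One minor imprecision is your phrase ``the restriction map $H^1(G^4,\Lambda(G^1))\to H^1(\widetilde{H}^4,\Lambda(\widetilde{H}^1))$'': since $\widetilde{H}^4$ is not a subgroup of $G^4$ (it is a quotient of $\widetilde{H}\subset G$), this is not literally a restriction map between those truncations. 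The paper makes this precise by first inflating up to $H^1(G,\Lambda(G^1))$, restricting to $H^1(\widetilde{H},\Lambda(H))$, and then inflating back down to $H^1(\widetilde{H}^r,\Lambda(H))$ for $2\le r\le 6$, using Lemma 3.9 to know these inflation maps are isomorphisms; this also requires the $H$-equivariant isomorphism $\Lambda(H)\xrightarrow{\sim}\Lambda(G^1)$ obtained from the transfer map $\mathrm{Ver}:G^1\to\widetilde{H}^1$ (Definitions 3.5--3.6). Your flagged ``main obstacle'' (functoriality of Theorem 3.7 under passage to $\widetilde{H}$) is exactly the point where this transfer-compatibility enters, so you have correctly located the nontrivial step; making it explicit via the inflation--restriction route closes the gap.
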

\begin{proof}
The right vertical arrow is induced by the commutative diagram (induced by the field embedding $K \hookrightarrow L$)
$$
\begin{tikzcd}
	{\mathbb I^{\text{fin}}_K} & {\mathbb{I}_L^{\text{fin}}} \\
	{\prod_{\mathfrak{p} \in \mathscr{P}_K^{\text{fin}}}(K_{\mathfrak{p}}^{\times})^{\wedge}} & {\prod_{\mathfrak{q} \in \mathscr{P}_L^{\text{fin}}}(L_{\mathfrak{q}}^{\times})^{\wedge}}
	\arrow[hook, from=1-1, to=1-2]
	\arrow[hook, from=1-1, to=2-1]
	\arrow[hook, from=1-2, to=2-2]
	\arrow[hook, from=2-1, to=2-2]
\end{tikzcd}
$$
together with 
$$
(K^{\times})^{\wedge} \hookrightarrow (L^{\times})^{\wedge}.
$$
The top horizontal arrow is induced by the following natural isomorphisms (c.f. Definition 3.2 and Lemma 3.9)
$$
\mathbb{I}^{\text{fin}}(G^1) \xrightarrow{\sim} \mathbb{I}^{\text{fin}}_K~;~H^1(G^4,\Lambda(G^1)) \xrightarrow{\sim} (K^{\times})^{\wedge}~\text{and}~\prod_{v \in \text{Dec}(G^1)}H^1(v,\Lambda(v)) \xrightarrow{\sim} \prod_{\mathfrak{p} \in \mathscr{P}_K^{\text{fin}}} (K_{\mathfrak{p}}^{\times})^{\wedge}.
$$
The bottom horizontal arrow is constructed in the same way as the top horizontal arrow. \par 
Now we construct the left vertical arrow. Notice that the inclusion $H \hookrightarrow G^1$ induces an inclusion $\widetilde{H} \hookrightarrow G$ which gives rise to the transfer map (e.g. see Proposition 1.5.9 in \cite{NSW})
$$
\text{Ver}: G^1 \to \widetilde{H}^1.
$$
In particular, the following diagram commutes
$$
\begin{tikzcd}
\mathbb{I}^{\text{fin}}(\widetilde{H}^1) \arrow[r] & \widetilde{H}^1 \\
\mathbb{I}^{\text{fin}}(G^1) \arrow[r] \arrow[u] & G^1 \arrow[u,"\text{Ver}"].
\end{tikzcd}
$$
It follows from Definition 3.5 and Definition 3.6 that we obtain an $H$-equivariant isomorphism
$$
\mu(H) \xrightarrow{\sim} \mu(G^1)
$$
w.r.t the restriction of $\chi: G^1 \to \widehat{\mathbb{Z}}^{\times}$ to $H$, hence we also have an $H$-equivariant isomorphism 
$$
\Lambda(H) \xrightarrow{\sim} \Lambda(G^1).
$$
Consider the restriction map
$$
\text{res}: H^1(G,\Lambda(G^1)) \to H^1(\widetilde{H},\Lambda(H)).
$$
The inflation map
$$
\text{inf}: H^1(\widetilde{H}^r,\Lambda(H)) \to H^1(\widetilde{H},\Lambda(H))
$$
is an isomorphism for $2 \leq r \leq 6$ (c.f. Lemma 3.9 and its proof). Without the loss of generality, we take $r = 2$. Then we have a natural map
$$
\theta: H^1(G^2,\Lambda(G^1)) \to H^1(\widetilde{H}^2,\Lambda(H))
$$
obtained by forming the composite
$$
H^1(G^2,\Lambda(G^1)) \xrightarrow{\sim} H^1(G,\Lambda(G^1)) \xrightarrow{\text{res}} H^1(\widetilde{H},\Lambda(H)) \xrightarrow{\sim} H^1(\widetilde{H}^2,\Lambda(H)).
$$
Putting $\theta$, the natural map $\mathbb{I}^{\text{fin}}(G^1) \to \mathbb{I}^{\text{fin}}(H)$, and the natural map 
$$
\prod_{v \in \text{Dec}(G^1)} H^1(v,\Lambda(v)) \to \prod_{w \in \text{Dec}(\widetilde{H}^1)} H^1(w,\Lambda(w))
$$
[induced by the restriction maps $H^1(D,\Lambda(D^1)) \to H^1(E,\Lambda(E^1))$ for suitable $D \in \widetilde{\text{Dec}}(G^2)$ and $E \in \widetilde{\text{Dec}}(\widetilde{H}^2)$] together, we obtain a natural map $\mathcal{H}^{\times}(G^1) \hookrightarrow \mathcal{H}^{\times}(\widetilde{H}^1)$. \par
One checks immediately that the right vertical map is injective, and the horizontal arrows are isomorphisms. The injectivity of the left vertical arrow follows immediately from the injectivity of the right vertical arrow. The commutativity of the diagram follows from the various definitions involved.
\end{proof}
\section{Reconstruction of $\mathbb{Q}_{m+3}$}
In this section we will adapt Hoshi's method in \cite{Ho2} in order to reconstruct the field extension $\mathbb{Q}_{m+3}/\mathbb{Q}$ inside $K_{m+3}$ for some integer $m \geq 0$. The point of this construction is to reconstruct the group $G_{\mathbb{Q}}^{m+3}$ starting from $G (\xrightarrow{\sim} G_K^{m+9})$. This construction is essential in order to recover enough local information to establish the final reconstruction. Moreover, here we need $m+3$ rather than $m+i$ for some $i\leq 2$ because we need to apply the statement of Theorem 1.2, where the three extra steps are essential.\par 
First, we shall characterise $\mathbb{Q}_{m+3}$ inside $K_{m+3}$. 
\begin{defn}
Let $K$ be a number field, and let $L/K$ be a finite extension contained in $K_{m+3}$ for some integer $m \geq 0$. For each positive integer $n$, we construct the following two sets
$$
\mathcal{G}(L,n) \subset \mathcal{F}(L,n)
$$
contained in $L_{\times}$ as follows: 
\begin{itemize}
    \item For $n=1$, we define $\mathcal{G}(L,n) := \mathcal{F}(L,n) := \{1\}$.
    \item For $n \geq 2$, we define
    $$
    \mathcal{G}(L,n) := \{a \in \mathcal{H}_{\times}(L): \exists N \geq 1~s.t. ~a^N \in \mathcal{F}(L,n-1)\}.
    $$
    Notice that $\mathcal{G}(L,n)$ is contained in $L_{\times}$ by Lemma 1.2 in \cite{Ho2}.
    \item For $n \geq 2$, we define $\mathcal{F}(L,n)$ to be the subfield of $L$ generated by $\mathcal{G}(L,n)$.
\end{itemize}
Moreover, we shall write
$$
\mathcal{F}(\infty,n) := \bigcup_L \mathcal{F}(L,n)
$$
where $L$ ranges over all finite extensions of $K$ contained in $K_{m+3}$, for some fixed positive integer $n$.
\end{defn}
\begin{lem}
Let $K$ be a number field, and let $m \geq 0$ be an integer. Then the equality
$$
\mathcal{F}(\infty,m+4) = \mathbb{Q}_{m+3}
$$
holds in $K_{m+3}$ (where the right hand side is clearly contained in $K_{m+3}$, and the left hand side is contained in $K_{m+3}$ by Lemma 1.2 in \cite{Ho2}).
\end{lem}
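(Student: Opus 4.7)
The plan is to prove by induction on $n$ in the range $2 \leq n \leq m+4$ the stronger identity $\mathcal{F}(\infty, n) = \mathbb{Q}_{n-1}$, which specialises at $n = m+4$ to the lemma. For the base case $n = 2$, the condition $a^N \in \mathcal{F}(L,1) = \{1\}$ says precisely that $a$ is a root of unity; by Lemma 1.2 of \cite{Ho2} we may identify $\mathcal{G}(L, 2)$ with $\mu(L)$, so that $\mathcal{F}(L, 2)$ is the subfield of $L$ generated by the roots of unity it contains. Every finite cyclotomic extension of $\mathbb{Q}$, after composition with $K$, is abelian over $K$ and hence sits in $K_1 \subseteq K_{m+3}$; taking the union over finite subextensions $L \subseteq K_{m+3}/K$ therefore extracts all of $\mu(\overline{\mathbb{Q}})$, and Kronecker--Weber delivers $\mathcal{F}(\infty, 2) = \mathbb{Q}^{\text{ab}} = \mathbb{Q}_1$.

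For the inductive step, suppose $\mathcal{F}(\infty, n) = \mathbb{Q}_{n-1}$ for some $n$ with $2 \leq n \leq m+3$. To see $\mathcal{F}(\infty, n+1) \subseteq \mathbb{Q}_n$, take $a \in \mathcal{G}(L, n+1)$: then $a \in L \subseteq K_{m+3}$ and $a^N \in \mathcal{F}(L, n) \subseteq \mathbb{Q}_{n-1}$ for some $N \geq 1$. Since $n - 1 \geq 1$, the field $\mathbb{Q}_{n-1}$ contains $\mathbb{Q}^{\text{ab}}$ and a fortiori every root of unity, so by Kummer theory $\mathbb{Q}_{n-1}(a)/\mathbb{Q}_{n-1}$ is abelian, forcing $a \in (\mathbb{Q}_{n-1})^{\text{ab}} = \mathbb{Q}_n$. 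This gives $\mathcal{F}(L, n+1) \subseteq \mathbb{Q}_n$ at every finite level, and the desired inclusion on $\mathcal{F}(\infty, n+1)$ follows by taking unions.

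For the reverse inclusion $\mathbb{Q}_n \subseteq \mathcal{F}(\infty, n+1)$, take $\alpha \in \mathbb{Q}_n$ and apply Kummer theory to the finite abelian extension $\mathbb{Q}_{n-1}(\alpha)/\mathbb{Q}_{n-1}$ to produce generators $\beta_1, \ldots, \beta_k$ with $\beta_i^{N_i} \in \mathbb{Q}_{n-1}$, together with a polynomial relation $\alpha = P(\beta_1, \ldots, \beta_k)$ whose coefficients $c_j$ lie in $\mathbb{Q}_{n-1}$. By the inductive hypothesis, each $\beta_i^{N_i}$ and each $c_j$ lies in $\mathcal{F}(L', n)$ for some finite $L' \subseteq K_{m+3}$. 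Choose a single finite subextension $L \subseteq K_{m+3}/K$ large enough to absorb all such $L'$ together with the $\beta_i$ and $\alpha$; monotonicity of $\mathcal{F}(-, n)$ under field inclusion (to be installed as a preliminary parallel induction) then delivers $\beta_i^{N_i}, c_j \in \mathcal{F}(L, n)$. Hence $\beta_i \in \mathcal{G}(L, n+1) \subseteq \mathcal{F}(L, n+1)$, and combined with the easy monotonicity $\mathcal{F}(L, n) \subseteq \mathcal{F}(L, n+1)$ in $n$, the polynomial relation places $\alpha$ in $\mathcal{F}(L, n+1) \subseteq \mathcal{F}(\infty, n+1)$.

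The main obstacle I anticipate is the auxiliary monotonicity $\mathcal{F}(L', n) \subseteq \mathcal{F}(L, n)$ for $L' \subseteq L$ used in the previous paragraph, which needs to be verified separately. Its proof reduces to the compatibility of Kummer containers under the inclusion $L' \hookrightarrow L$, i.e.\ a functorial injection $\mathcal{H}^{\times}(L') \hookrightarrow \mathcal{H}^{\times}(L)$; this is the natural extension of Proposition 3.10 from abelian to arbitrary finite subextensions of $K_{m+3}/K$. Once installed, the formal induction $\mathcal{G}(L', n) \subseteq \mathcal{G}(L, n)$ passes through without further incident, and the main argument goes through as sketched.
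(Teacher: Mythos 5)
Your proof follows essentially the same approach as the paper: establish $\mathcal{F}(\infty,2) = \mathbb{Q}_1$ via Kronecker--Weber, then induct one step at a time, with the forward inclusion coming from Kummer theory (each stage is abelian over the previous one, hence lands in $\mathbb{Q}_n$) and the reverse inclusion by producing Kummer generators over $\mathbb{Q}_{n-1}$ lying in $\mathcal{G}(L,n+1)$ for a suitable finite $L$. The monotonicity $\mathcal{F}(L',n) \subseteq \mathcal{F}(L,n)$ for $L' \subseteq L$ that you flag is indeed used implicitly but not spelled out in the paper's proof, and your proposed resolution via the natural injection $\mathcal{H}^{\times}(L') \hookrightarrow \mathcal{H}^{\times}(L)$ (the field-theoretic analogue of the right vertical arrow in Proposition 3.12) is the correct one.
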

\begin{proof}
Consider $\mathcal{F}(\infty,2)$. It follows from Lemma 3.10 (iii) in \cite{Ho1}, together with the Kronecker-Weber Theorem, that
$$
\mathcal{F}(\infty,2) = \mathbb{Q}_1 (= \mathbb{Q}(\mu_{\infty})).
$$
By Definition 4.1, and Kummer theory, $\mathcal{F}(\infty,3)$ is an abelian extension of $\mathcal{F}(\infty,2) = \mathbb{Q}_1$. We can conclude that $\mathcal{F}(\infty,3)$ is contained in $\mathbb{Q}_2$ because $\mathbb{Q}_{2}$ is the maximal abelian extension of $\mathbb{Q}_1$. \par 
Conversely, let $F/\mathbb{Q}_1$ be a finite abelian extension. By Kummer theory, 
$$
F = \mathbb{Q}_1(a_1^{1/r_1},\dots,a_s^{1/r_s})
$$
where $a_1,\dots,a_s \in \mathbb{Q}_1$, $r_1,\dots,r_s \in \mathbb{Z}_{\geq2}$, and $s \in \mathbb{Z}_{\geq1}$. Then it follows immediately from Definition 4.1 that $F$ is contained in $\mathcal{F}(\infty,3)$. Thus, we can conclude that
$$
\mathcal{F}(\infty,3) = \mathbb{Q}_{2}.
$$
Now we consider $m = 0$. By Kummer theory and Definition 4.1, $\mathcal{F}(\infty,4)$ is an abelian extension of $\mathcal{F}(\infty,3)$, hence contained in $\mathbb{Q}_3$. Conversely, by Kummer theory, every finite abelian extension of $\mathbb{Q}_2$ is contained in $\mathcal{F}(\infty,4)$, hence $\mathbb{Q}_3$ is contained in $\mathcal{F}(\infty,4)$. Thus,
$$
\mathcal{F}(\infty,m+4) = \mathbb{Q}_{m+3}
$$
holds true for $m = 0$. Assume that
$$
\mathcal{F}(\infty,m+4) = \mathbb{Q}_{m+3}
$$
holds true for $m := j\geq 1$. Now we consider $m := j+1$. Again by Kummer theory and Definition 4.1, $\mathcal{F}(\infty,(j+1)+4)$ is an abelian extension of $\mathcal{F}(\infty,j+4)$, hence $\mathcal{F}(\infty,(j+1)+4)$ is contained in $\mathbb{Q}_{(j+1)+3}$. Conversely, by Kummer theory, every finite abelian extension of $\mathbb{Q}_{j+3}$ is contained in $\mathcal{F}(\infty,(j+1)+4)$, hence $\mathbb{Q}_{(j+1)+3}$ is contained in $\mathcal{F}(\infty,(j+1)+4)$. Thus, we may deduce that
$$
\mathcal{F}(\infty,j+4) = \mathbb{Q}_{j+3} \implies \mathcal{F}(\infty,(j+1)+4) = \mathbb{Q}_{(j+1)+3}.
$$
It follows then by induction that
$$
\mathcal{F}(\infty,m+4) = \mathbb{Q}_{m+3}
$$
for all $m \in \mathbb{Z}_{\geq 0}$.
\end{proof}
\begin{defn}
Let $G$ be a profinite group of $\text{GSC}^{m+2}$-type for some integer $m \geq 1$. We write $\widetilde{\text{Dec}}^{d=1}(G^{m})$ for the subset of $\widetilde{\text{Dec}}(G^{m})$ consisting of those elements $D$ such that $d_D = 1$. Moreover, we write $\text{Dec}^{d=1}(G^1)$ for the subset of $\text{Dec}(G^1)$ consisting of elements $v$ with $d_v = 1$.
\end{defn}
\begin{prop}
Let $G$ be a profinite group of $\text{GSC}^6$-type. Then for each $D \in \widetilde{\text{Dec}}^{d=1}(G^{4})$, one can group-theoretically reconstruct a (topological) field $k(D^1)$ from $D$ such that
$$
k(D^1) \xrightarrow{\sim} \mathbb{Q}_{p_D}.
$$
In this case, for $v \in \text{Dec}^{d=1}(G^1)$, we write 
$$
k(v) := k(D^{1})
$$
for any $D \in \widetilde{\text{Dec}}^{d=1}(G^4)$ whose image in $\text{Dec}^{d=1}(G^1)$ is $v$ [note that $k(v)$ does not depend on the choice of $D$ as the construction of the local cyclotomes does not depend on the choice of $D$, c.f. Theorem 2.6 (ix) and the proof below].
\end{prop}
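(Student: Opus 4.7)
The plan is to exploit the hypothesis $d_D = 1$, which forces $K_{\mathfrak p} \cong \mathbb Q_{p_D}$, and to upgrade the multiplicative data furnished by Theorem 2.6 into a full topological field structure via an endomorphism-ring construction.

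The first step is to feed $D$ into Theorem 2.6 to obtain the residue characteristic $p_D$, the multiplicative group $k^{\times}(D^1) \cong K_{\mathfrak p}^{\times}$, the unit subgroup $\mathcal O^{\times}(v) \subset k^{\times}(D^1)$ arising from the image of inertia, and (via part (ix), which genuinely requires $D^4$) the local cyclotome $\Lambda(D^1)$. I would then form
\[
U(D) := \mathcal O^{\times}(v) \big/ (\mathcal O^{\times}(v))_{\mathrm{tor}}.
\]
Since $\mathcal O^{\times}(v) \cong \mathbb Z_{p_D}^{\times}$ and the quotient kills the Teichm\"uller piece $\mu_{p_D - 1}$ (or $\{\pm 1\}$ when $p_D = 2$), $U(D)$ is a topologically cyclic pro-$p_D$ group, hence topologically isomorphic to $(\mathbb Z_{p_D}, +)$. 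This is the crucial point: the additive group underlying $\mathbb Z_{p_D}$ has been extracted from purely multiplicative, group-theoretic data.

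Next, I would define
\[
\Sigma(D) := \mathrm{End}_{\mathrm{cts}}(U(D)),
\]
the ring of continuous group endomorphisms of $U(D)$, equipped with the compact-open topology. Because $U(D)$ is a rank-one pro-$p_D$ group, every continuous endomorphism is of the form $x \mapsto x^a$ for a unique $a \in \mathbb Z_{p_D}$, independently of any choice of topological generator; this yields a canonical isomorphism of topological rings $\Sigma(D) \cong \mathbb Z_{p_D}$, under which the image of $p_D \in \mathbb Z$ via $\mathbb Z \hookrightarrow \Sigma(D)$ serves as the uniformizer. I would then set
\[
k(D^1) := \Sigma(D)\bigl[\tfrac{1}{p_D}\bigr]
\]
with the unique topology making $\Sigma(D)$ an open compact subring. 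This is a topological field, and the isomorphism $\Sigma(D) \cong \mathbb Z_{p_D}$ extends to the desired topological field isomorphism $k(D^1) \xrightarrow{\sim} \mathbb Q_{p_D}$.

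For the final clause about independence of the choice of $D$ above $v$: any two lifts $D, D' \in \widetilde{\mathrm{Dec}}^{d=1}(G^4)$ of $v$ are $G^4$-conjugate, and conjugation intertwines $\mathcal O^{\times}$, its torsion subgroup, the compact-open endomorphism ring, and the reconstructed residue characteristic, yielding a canonical isomorphism of the resulting topological fields, parallel to the independence assertion in Theorem 2.6 (ix). The main technical obstacle I anticipate is the careful bookkeeping at each step to verify that only $\text{GSC}^6$-level data is used (in particular that parts (v)--(ix) of Theorem 2.6, which descend through $D^3$ and $D^4$, are in fact sufficient), together with tracking a natural compatible embedding $k^{\times}(D^1) \hookrightarrow k(D^1)^{\times}$ arising from the identification of $U(D)$ with the principal-unit subgroup of $\Sigma(D)^{\times}$---this multiplicative compatibility will be essential downstream in the global field reconstruction.
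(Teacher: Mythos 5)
Your proposal is correct but differs from the paper's in the choice of rank-one pro-$p_D$ module whose endomorphism ring produces $\mathbb{Z}_{p_D}$. The paper, following Hoshi, works with the pro-$p_D$-part of the local cyclotome $\Lambda(D^1)^{(p_D)}$ and then takes the endomorphism ring of its perfection to arrive at $\mathbb{Q}_{p_D}$ in one step. You instead use the torsion-free quotient $U(D) = \mathcal{O}^{\times}(v)/(\mathcal{O}^{\times}(v))_{\mathrm{tor}}$ of the unit group, take $\mathrm{End}_{\mathrm{cts}}$ to get $\mathbb{Z}_{p_D}$, and then localize at $p_D$. Both modules are canonically-constructed topological groups isomorphic to $(\mathbb{Z}_{p_D},+)$, so both yield a canonical topological ring isomorphism to $\mathbb{Z}_{p_D}$ independently of the choice of $D$ above $v$, and the perfection-then-End versus End-then-localize discrepancy is immaterial. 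One small observation: you list $\Lambda(D^1)$ among the inputs extracted from Theorem 2.6 at the outset, but your construction never actually uses it; $\mathcal{O}^{\times}(v)$ descends from $D^3$ via the inertia reconstruction in Theorem 2.6(v), whereas $\Lambda(D^1)$ genuinely needs $D^4$, so your route is marginally more economical in the length of solvable quotient required, though the paper's choice of the cyclotome is the more natural one given its central role downstream (e.g.\ in the local-global synchronisation of Theorem 3.7).
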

\begin{proof}
This follows from Theorem 2.6 (ix) and Remark 2.1.1 in \cite{Ho2}. More precisely, we have the natural ring isomorphism
$$
\text{End}(\Lambda(D^1)^{(p_D)}) \xrightarrow{\sim} \mathbb{Z}_{p_D}, 
$$
which determines a ring structure of $\mathbb{Z}_{p_D}$. Similarly, we have a natural ring isomorphism
$$
\text{End}((\Lambda(D^1)^{(p_D)})^{\text{pf}}) \xrightarrow{\sim} \mathbb{Q}_{p_D}
$$
where $(-)^{\text{pf}}$ denotes the perfection of monoids (c.f. section 0 in \cite{Ho1}).
\end{proof}
Next, we reconstruct group-theoretically the minimal subfield $\mathbb{Q}$ of $K$.
\begin{prop}
Let $G$ be a profinite group of $\text{GSC}^6$-type. Then one can group-theoretically reconstruct a field $Q(G)$ from $G$ such that
$$
Q(G) \xrightarrow{\sim} \mathbb{Q}.
$$
\end{prop}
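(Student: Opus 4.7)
The plan is to realize $\mathbb{Q}$ as the prime subfield of one of the local fields $k(v)$ reconstructed in Proposition 4.4, and to pass from a choice of $v$ to a canonical object by the usual uniqueness of the prime subfield.

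First, I would verify that $\text{Dec}^{d=1}(G^1)$ is nonempty. Group-theoretically this is the subset of $\text{Dec}(G^1)$ cut out by the reconstructed invariant $d_v = 1$ from Theorem 2.6 (iii), so the claim makes sense independently of the $\text{GSC}^6$-type datum. Non-emptiness then reduces to the classical fact that any number field $K$ admits infinitely many primes $\mathfrak{p}$ with $K_{\mathfrak{p}} \cong \mathbb{Q}_{p_{\mathfrak{p}}}$; this follows, e.g., from Chebotarev applied to the Galois closure of $K/\mathbb{Q}$, which produces infinitely many rational primes that split completely in $K$, each of whose primes of $K$ satisfies $d_{\mathfrak{p}} = 1$.

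Next, fix any $v \in \text{Dec}^{d=1}(G^1)$ and apply Proposition 4.4 to obtain the topological field $k(v)$ with $k(v) \xrightarrow{\sim} \mathbb{Q}_{p_v}$. Set
\[
Q(G) \; := \; \text{the prime subfield of } k(v),
\]
concretely the fraction field inside $k(v)$ of the image of the canonical ring homomorphism $\mathbb{Z} \to k(v)$. Since $k(v)$ is a field of characteristic $0$ (a fact visible already from the reconstruction of $k(v)$ as the endomorphism ring of the perfection of $\Lambda(D^1)^{(p_D)}$), its prime subfield is canonically isomorphic to $\mathbb{Q}$, which yields the required $Q(G) \xrightarrow{\sim} \mathbb{Q}$.

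For the output to be genuinely group-theoretic I must check independence of the choice of $v$. Given $v, v' \in \text{Dec}^{d=1}(G^1)$, the prime subfields of $k(v)$ and $k(v')$ are canonically identified via the unique ring homomorphism $\mathbb{Q} \xrightarrow{\sim} \mathbb{Q}$, so $Q(G)$ is well-defined up to a unique isomorphism. Moreover, under the isomorphism $k(v) \xrightarrow{\sim} K_{\mathfrak{p}_v}$ coming from the $\text{GSC}^6$-type datum, $Q(G) \subset k(v)$ corresponds to the prime subfield $\mathbb{Q} \subset K_{\mathfrak{p}_v}$, which is compatible with the global inclusion $\mathbb{Q} \subset K$ via $K \hookrightarrow K_{\mathfrak{p}_v}$.

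There is essentially no obstacle in this proposition in isolation, once Proposition 4.4 is in hand: the only additional ingredient is the elementary observation that any characteristic $0$ field contains $\mathbb{Q}$ canonically as prime subfield. The harder work lies ahead, in Section 4, where $Q(G)$ (together with the local fields $k(v)$) must be used to single out the decomposition datum at primes of $\mathbb{Q}$, identify the subgroup corresponding to $G_{\mathbb{Q}}^{m+3}$ inside $G^{m+3}$, and reconstruct the solvable tower $\mathbb{Q}_{m+3}/\mathbb{Q}$ coherently inside the eventual reconstruction of $K_{m+3}/K$.
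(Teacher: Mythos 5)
Your proof is correct for the proposition as literally stated, but it takes a genuinely different route from the paper, and the difference matters for how $Q(G)$ is used afterwards. You construct $Q(G)$ as the prime subfield of a single local field $k(v)$ for a chosen $v \in \text{Dec}^{d=1}(G^1)$, and then argue independence of the choice up to canonical isomorphism. The paper instead constructs $Q(G)$ as a specific \emph{subset} of the global Kummer container $\mathcal{H}_{\times}(G^1)$: it first produces the minimal subfield $Q(v) \subset k(v)$ for every $v \in \text{Dec}^{d=1}(G^1)$, and then defines $Q(G)$ to be the set of elements $a \in \mathcal{H}_{\times}(G^1) \subset \prod_v k_{\times}(v)$ whose components in $k_{\times}(v)$ and $k_{\times}(w)$ are matched under the canonical identification $Q(v) \xrightarrow{\sim} Q(w)$, for all pairs $v,w$. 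This ``diagonal compatibility'' construction pins down $Q(G)$ as the image of $\mathbb{Q} \hookrightarrow K$ under the monoid isomorphism $\mathcal{H}_{\times}(G^1) \xrightarrow{\sim} \mathcal{H}_{\times}(K)$ of Proposition 3.12, rather than just an abstract copy of $\mathbb{Q}$. That extra structure is precisely what Definition 4.6 and Proposition 4.8 rely on: the fields $\mathcal{F}(H,n)$ are built inside $\mathcal{H}_{\times}(\widetilde{H}^1)$, and Proposition 4.8 needs $\mathcal{F}(H,n)$ to be a field extension of $Q(G)$ inside that same ambient object, compatibly as $H$ varies. Your last paragraph gestures at this (``the harder work lies ahead''), but it is worth making explicit that the embedding $Q(G) \subset \mathcal{H}_{\times}(G^1)$ is part of what this proposition must deliver for the later arguments in the same section, not a separate concern. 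Your non-emptiness remark via Chebotarev (splitting completely) is a correct and useful observation that the paper leaves implicit.
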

\begin{proof}
This is essentially the same argument as in Definition 2.1 and Definition 2.2 in \cite{Ho2}. More precisely, by Proposition 4.4, for each $D \in \widetilde{\text{Dec}}^{d=1}(G^4)$, one can reconstruct group-theoretically a unique minimal subfield $Q(v)$ of $k(D^1)$ isomorphic to $\mathbb{Q}$, where $v \in \text{Dec}^{d=1}(G^1)$ is the image of $D$ (and this definition is independent from the choice of $D$ whose image is $v$). Then we define
$$
Q(G) \subset \mathcal{H}_{\times}(G^1) \subset \prod_{v \in \text{Dec}^{d=1}(G^1)} k_{\times}(v)
$$
(where the second inclusion follows from Lemma 3.10 (v) in \cite{Ho1}) to be the subset of $\mathcal{H}_{\times}(G^1)$ consisting of elements $a \in \mathcal{H}_{\times}(G^1)$ such that the image of $a$ in $k_{\times}(v)$ and in $k_{\times}(w)$ are naturally identified via the unique isomorphism
$$
Q(v) \xrightarrow{\sim} Q(w)
$$
for all $v,w \in \text{Dec}^{d=1}(G^1)$. Then we may view
$$
Q(G) \subset \prod_{v \in \text{Dec}^{d=1}(G)} Q(v)
$$
as a subring satisfying the property above. One checks immediately that the isomorphism of monoids $\mathcal{H}_{\times}(G^1) \xrightarrow{\sim} \mathcal{H}_{\times}(K)$ (c.f. Proposition 3.12) restricts to an isomorphism
$$Q(G) \xrightarrow{\sim}\mathbb{Q}.$$
\end{proof}
\begin{defn}
Let $G$ be a profinite group of $\text{GSC}^{m+9}$-type for $m \geq 0$. Let $H \subset G^{m+3}$ be an open subgroup and write $\widetilde{H}$ for the inverse image of $H$ in $G$. Thus, $\widetilde{H}^6$ is of $\text{GSC}^6$-type. For each positive integer $n$, we define the following two sets
$$
\mathcal{G}(H,n) \subset \mathcal{F}(H,n)
$$
contained in $\mathcal{H}_{\times}(\widetilde{H}^1)$ as follows:
\begin{itemize}
    \item For $n = 1$, we define $\mathcal{G}(H,n) := \mathcal{F}(H,n) := \{1\}$.
    \item For $n \geq 2$, we define
    $$
    \mathcal{G}(H,n) := \{a \in \mathcal{H}_{\times}(\widetilde{H}^1):\exists N \geq 1~s.t.~a^N \in \mathcal{F}(H,n-1)\}.
    $$
    \item For $n \geq 2$, we define $\mathcal{F}(H,n)$ to be the subring of
    $$
    \prod_{v \in \text{Dec}^{d=1}(\widetilde{H}^1)} k(v)
    $$
    (where the injectivity of $\mathcal{H}_{\times}(\widetilde{H}^1) \subset \prod_{v \in \text{Dec}^{d=1}(\widetilde{H}^1)} k(v)$ follows from Lemma 3.10 (v) in \cite{Ho1}) generated by $\mathcal{G}(H,n)$.
\end{itemize}
Moreover, we define
$$
\mathcal{F}_{G^{m+3}}(\infty,n) := \varinjlim_H ~\mathcal{F}(H,n)
$$
where $H$ ranges over all open subgroups of $G^{m+3}$, and the transition maps are defined to be natural inclusions (c.f. Proposition 3.12)
$$
\mathcal{H}_{\times}(\widetilde{H}^1) \hookrightarrow \mathcal{H}_{\times}(\widetilde{H'}^1)
$$
for each open subgroups $H' \subset H \subset G^{m+3}$.
\end{defn}
\begin{prop}
Let $G$ be a profinite group of $\text{GSC}^{m+9}$-type for some integer $m \geq 0$. Then the followings hold: \par 
(i) There exists a field isomorphism
$$
\mathcal{F}_{G^{m+3}}(\infty,m+4) \xrightarrow{\sim} \mathcal{F}(\infty,m+4)
$$
which is equivariant w.r.t the isomorphism $\alpha_{m+3}: G^{m+3} \xrightarrow{\sim} G_K^{m+3}$. \par
(ii) We write $\mathcal{Q} := \text{Aut}_{\text{field}}(\mathcal{F}_{G^{m+3}}(\infty,m+3))$ for the automorphism group of $\mathcal{F}_{G^{m+3}}(\infty,m+3)$. Then the natural action of $G^{m+3}$ on $\mathcal{F}_{G^{m+3}}(\infty,m+4)$ by automorphism determines a homomorphism
$$
G^{m+3} \to \mathcal{Q}.
$$
\end{prop}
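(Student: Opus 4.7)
The plan is to prove (i) by induction on $n$: for each open subgroup $H \subset G^{m+3}$ with corresponding fixed field $L \subset K_{m+3}$, I will show that the group-theoretically constructed $\mathcal{F}(H,n)$ matches the number-theoretic $\mathcal{F}(L,n)$ in a manner functorial in $H$ and equivariant under $\alpha_{m+3}: G^{m+3} \xrightarrow{\sim} G_K^{m+3}$. Taking direct limits and invoking Lemma 4.2 at $n = m+4$ then yields (i). For (ii), the functoriality of the entire construction in $G^{m+3}$ will produce the required homomorphism.

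The key ingredients are Proposition 3.12 and Theorem 2.6(ix) together with Proposition 4.4. The former provides a $G^{m+3}$-equivariant isomorphism of Kummer containers $\mathcal{H}_\times(\widetilde{H}^1) \xrightarrow{\sim} \mathcal{H}_\times(L)$ compatible with the transitions coming from $H' \subset H$, while the latter identifies each reconstructed local field $k(v)$ for $v \in \text{Dec}^{d=1}(\widetilde{H}^1)$ with the completion $L_\mathfrak{p} \cong \mathbb{Q}_{p_v}$ at the corresponding prime. These identifications combine to give $\prod_v k(v) \cong \prod_\mathfrak{p} L_\mathfrak{p}$ compatibly with the embeddings of the Kummer containers. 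The induction then proceeds as follows: the base $n=1$ is immediate; assuming an equivariant identification $\mathcal{F}(H,n-1) \leftrightarrow \mathcal{F}(L,n-1)$, the condition $a^N \in \mathcal{F}(H,n-1)$ defining $\mathcal{G}(H,n)$ translates, via the monoid isomorphism of Kummer containers, to the defining condition $a^N \in \mathcal{F}(L,n-1)$ of $\mathcal{G}(L,n)$, and the subring/subfield of $\prod_v k(v)$ generated by $\mathcal{G}(H,n)$ corresponds, under the diagonal embedding, to the subring/subfield of $L$ generated by $\mathcal{G}(L,n)$. Taking the direct limit over $H$, whose transitions match the inclusions of finite subextensions of $K_{m+3}/K$ by Proposition 3.12, yields the equivariant identification $\mathcal{F}_{G^{m+3}}(\infty,n) \xrightarrow{\sim} \mathcal{F}(\infty,n)$, which at $n = m+4$ becomes $\mathbb{Q}_{m+3}$ by Lemma 4.2.

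For (ii), the tautological $G^{m+3}$-action by conjugation on the various $\widetilde{H}$ induces a compatible action on the Kummer containers (already tracked in Proposition 3.12) and on the local fields $k(v)$ (by functoriality of the reconstructions in Section 2). This action preserves each $\mathcal{F}(H,n)$ inductively and descends to an action by ring automorphisms on the direct limit, producing the required homomorphism $G^{m+3} \to \mathcal{Q}$; under the isomorphism of (i) this corresponds to the standard quotient surjection $G_K^{m+3} \twoheadrightarrow \text{Gal}(\mathbb{Q}_{m+3}/\mathbb{Q}) = G_\mathbb{Q}^{m+3}$. The main technical obstacle is reconciling the ``subring of $\prod_v k(v)$'' formulation in Definition 4.6 with the ``subfield of $L$'' formulation in Definition 4.1: at any finite stage, the group-theoretic object may fail to contain inverses present in its number-theoretic counterpart, so the identification must be carried out at the level of the direct limit, where any denominator arising as a ratio of elements of $\mathcal{G}(L,n)$ becomes itself expressible via generators at a later stage of the Kummer filtration. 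The careful bookkeeping for this absorption of inverses is precisely what forces the filtration length to be $m+4$ (equivalently, the solvability length $m+3$).
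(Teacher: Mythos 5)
Your main argument is essentially the paper's: invoke Proposition 3.12 for the $G^{m+3}$-equivariant identification of Kummer containers, use Proposition 4.4 to identify the local factors $k(v) \cong L_{\mathfrak p}$, trace the recursive Definitions 4.1 and 4.6 through these identifications (your induction on $n$ is exactly this), pass to the direct limit over $H$, and derive (ii) from the resulting functoriality. The paper packages this into one commutative diagram rather than writing out the induction, and establishes the isomorphism $\mathcal{F}(H,m+4) \xrightarrow{\sim} \mathcal{F}(L,m+4)$ already at each finite level $H$ before taking the limit, but the substance is the same.

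Where you go astray is in your closing paragraph. First, the paper does \emph{not} defer the subring-vs-subfield reconciliation to the direct limit: it asserts (in the proof of Proposition 4.8) that each $\mathcal{F}(H,n)$ is already a field, so that the finite-level isomorphism with the subfield $\mathcal{F}(L,n)$ makes sense before any limit is taken. Second, and more importantly, your claim that ``the careful bookkeeping for this absorption of inverses is precisely what forces the filtration length to be $m+4$'' is simply wrong. The value $m+4$ has nothing to do with inverses; it comes from Lemma 4.2, which shows $\mathcal{F}(\infty,m+4) = \mathbb{Q}_{m+3}$. The offset of $1$ is a Kummer-theoretic artefact: the recursion needs one extra step to generate $\mathbb{Q}_1 = \mathbb{Q}(\mu_\infty)$ before the abelian Kummer tower over it can begin, so reaching solvability length $m+3$ requires index $m+4$. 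Beyond this mischaracterisation, the rest of your outline is sound and matches the paper.
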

\begin{proof}
We first verify assertion (i). It follows from Proposition 3.12 that we have the following commutative diagram
$$
\begin{tikzcd}
\mathcal{H}_{\times}(G^1) \arrow[d,hookrightarrow] \arrow[r,"\sim"] & \mathcal{H}_{\times}(K) \arrow[d,hookrightarrow] \\
\mathcal{H}_{\times}(\widetilde{H}^1) \arrow[r,"\sim"] & \mathcal{H}_{\times}(L)
\end{tikzcd}
$$
where $H \subset G^{m+3}$ is the open subgroup corresponding to the finite extension $L/K$ contained in $K_{m+3}$ via $\alpha_{m+3}$. \par 
For each open normal subgroup $H \subset G^{m+3}$ corresponding to some finite Galois extension $L/K$ contained in $K_{m+3}$ via $\alpha_{m+3}$, we have the following commutative diagram (where the injectivity of the lower vertical arrows follows from Lemma 3.10 (v) in \cite{Ho1})
$$
\begin{tikzcd}
	{\mathcal{F}(H,m+4)} && {\mathcal{F}(L,m+4)} \\
	{\mathcal{H}_{\times}(\widetilde{H}^1)} && {\mathcal{H_{\times}}(L)} \\
	{\prod_{v \in \text{Dec}^{d=1}(\widetilde{H}^1)}k(v)} && {\prod_{\mathfrak p \in \mathscr{P}_L^{\text{fin,d=1}}}L_{\mathfrak p}}
	\arrow["\sim", from=1-1, to=1-3]
	\arrow[hook, from=1-1, to=2-1]
	\arrow[hook, from=1-3, to=2-3]
	\arrow["\sim", from=2-1, to=2-3]
	\arrow[hook, from=2-1, to=3-1]
	\arrow[hook, from=2-3, to=3-3]
	\arrow["\sim", from=3-1, to=3-3]
\end{tikzcd}
$$
where the top horizontal isomorphism is obtained by restricting the middle isomorphism (c.f. Definition 4.1 and Definition 4.6). Moreover, the top isomorphism is an isomorphism of rings, the preservation of the ring structures of the top isomorphism follows from the fact that the bottom isomorphism is an isomorphism of rings. \par 
Further, the isomorphism
$$
\mathcal{F}(H,n) \xrightarrow{\sim} \mathcal{F}(L,n)
$$
is equivariant w.r.t the isomorphism $G^{m+3}/H \xrightarrow{\sim} \text{Gal}(L/K)$ induced by $\alpha_{m+3}$. By taking inductive limit over all open normal subgroups of $H$ of $G^{m+3}$, we obtain an isomorphism of fields
$$
\mathcal{F}_{G^{m+3}}(\infty,m+4) \xrightarrow{\sim} \mathcal{F}(\infty,m+4)
$$
which is equivariant w.r.t $\alpha_{m+3}$. This proves assertion (i), assertion (ii) is an immediate consequence of assertion (i).
\end{proof}
\begin{prop}
Let $G$ be a profinite group of $\text{GSC}^{m+9}$-type for $m \geq 0$. Let $H \subset G^{m+3}$ be an open subgroup and write $\widetilde{H}$ for the inverse image of $H$ in $G$. Then there exists a natural injective map
$$
\mathcal{F}(H,n)^{\times} \to k^{\times}( D \cap H)
$$
for any $D \in \widetilde{\text{Dec}}(G^{m+3})$ and each positive integer $n$.
\end{prop}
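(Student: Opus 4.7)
The plan is to construct the map as the group-theoretic counterpart of the natural localization $\mathcal{F}(L,n)^{\times} \hookrightarrow L_{\mathfrak{q}}^{\times}$, where $L/K$ is the finite subextension of $K_{m+3}/K$ corresponding to $H$ and $\mathfrak{q}$ is the prime of $L$ determined by $D \cap H$; injectivity will then follow at once from the injectivity of the localization in the number field picture.

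First, I would extend Proposition 4.7(i) from the case $n = m+4$ to arbitrary positive integers $n$: the same inductive argument using Definition 4.6 and Definition 4.1 produces an $\alpha_{m+3}$-equivariant ring isomorphism $\mathcal{F}(H,n) \xrightarrow{\sim} \mathcal{F}(L,n)$. Combined with the isomorphism $\mathcal{H}^{\times}(\widetilde{H}^1) \xrightarrow{\sim} \mathcal{H}^{\times}(L)$ of Proposition 3.12, this allows the diagonal embedding $L^{\times} \hookrightarrow \mathcal{H}^{\times}(L)$ of Definition 3.9 to be transported to an embedding $\mathcal{F}(H,n)^{\times} \hookrightarrow \mathcal{H}^{\times}(\widetilde{H}^1)$. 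Next, the dagger injection of Definition 3.8 places $\mathcal{H}^{\times}(\widetilde{H}^1)$ inside $\prod_{w \in \text{Dec}(\widetilde{H}^1)} H^1(w, \Lambda(w))$, whose components factor through $k^{\times}(w) \hookrightarrow H^1(w, \Lambda(w))$; projecting to the component $w$ associated to $D \cap H$, and invoking the identification $k^{\times}(D \cap H) \cong L_{\mathfrak{q}}^{\times}$ obtained from Theorem 2.6(viii) applied to $D \cap H$ as an open subgroup of $D$, yields the desired morphism.

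Injectivity is then inherited from the injectivity of the natural chain $\mathcal{F}(L,n)^{\times} \hookrightarrow L^{\times} \hookrightarrow L_{\mathfrak{q}}^{\times}$. The main obstacle I anticipate lies in verifying that the purely group-theoretic projection through the Kummer container agrees, under these identifications, with the number-theoretic localization $L \to L_{\mathfrak{q}}$; this compatibility ultimately reduces to the naturality of the Kummer map together with the local-global synchronization established in Theorem 3.7, both of which are already at our disposal.
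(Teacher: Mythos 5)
Your approach is correct but genuinely different from the paper's. The paper's proof proceeds by a direct induction on $n$, showing that $\mathcal{G}^{\times}(H,n)$ lands inside $k^{\times}(D\cap H)$ step by step: the case $n=2$ follows from $\mathcal{G}^{\times}(H,2)=\mathcal{H}^{\times}(\widetilde{H}^1)_{\text{tor}}=\mu(H)^H$ together with the local-global synchronisation of Theorem 3.7, and the inductive step uses Lemma 1.2 of \cite{Ho2} to conclude that if $a^N$ lies in $k^{\times}(D\cap H)$ for some $N$ then so does $a$. The field structure of $\mathcal{F}(H,n)$ (Proposition 4.5) then upgrades the containment of the generating set to the injective map on $\mathcal{F}(H,n)^{\times}$.

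You instead transport the entire question to the number field side: extend Proposition 4.7(i) to arbitrary $n$, embed $\mathcal{F}(L,n)^{\times}\hookrightarrow L^{\times}\hookrightarrow \mathcal{H}^{\times}(L)\cong\mathcal{H}^{\times}(\widetilde{H}^1)$, project via $\dagger$, and inherit injectivity from $\mathcal{F}(L,n)^{\times}\hookrightarrow L_{\mathfrak{q}}^{\times}$. This is valid and arguably more conceptual, since it makes the link with the classical localisation map transparent. The trade-off is exactly the compatibility issue you flag: you must check that the group-theoretic projection $\mathcal{H}^{\times}(\widetilde{H}^1)\to k^{\times}(D\cap H)$ agrees with the number-theoretic localisation $\mathcal{H}^{\times}(L)\to L_{\mathfrak{q}}^{\times}$ under the identifications. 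You correctly observe this reduces to the naturality of the Kummer maps and Theorem 3.7; in fact the commutative diagram with the three horizontal isomorphisms constructed in the proof of Proposition 3.12 already encodes exactly this compatibility, so the gap is closable. By contrast the paper's induction never needs the full isomorphism $\mathcal{H}^{\times}(\widetilde{H}^1)\cong\mathcal{H}^{\times}(L)$: it only uses the local-global synchronisation at the level of roots of unity for the base case, and a purely multiplicative root-extraction argument for the inductive step. Your route buys clarity of intent; the paper's route buys economy of hypotheses, and keeps the construction closer to the mono-anabelian ideal of building the containment intrinsically rather than by reference to the isomorphism with the Galois-theoretic picture.
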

\begin{proof}
Notice that to prove the assertion, it suffice to verify that
$$
\mathcal{G}^{\times}(H,n) := \mathcal{G}(H,n) \setminus \{0\}
$$
is contained in $k^{\times}(D \cap H)$ (indeed, it follows from Proposition 4.5 and Definition 4.6 that $\mathcal{F}(H,n)$ is a field extension of $Q(G)$). We shall verify this by induction on $n$. \par
For $n =1$, $\mathcal{G}^{\times}(H,n) = \{1\}$ is trivially contained in $k^{\times}(D \cap H)$. Let $n = 2$, by Definition 4.6 and Lemma 3.10 (iii) in \cite{Ho1}, $\mathcal{G}^{\times}(H,2) = \mathcal{H}^{\times}(\widetilde{H}^1)_{\text{tor}} = \mu(H)^H$. Then it follows from Theorem 3.7 and its proof, that $\mu(H)^H \subset k^{\times}(D \cap H)_{\text{tor}} \subset k^{\times}(D \cap H)$. \par 
Assume that for $n = i \geq 2$, the containment $\mathcal{G}^{\times}(H,i) \subset k^{\times}(D \cap H)$ holds true. Hence there exists a natural injective map
$$
\eta_i:\mathcal{F}(H,i)^{\times} \hookrightarrow k^{\times}(D \cap H).
$$
By Definition 4.6, let $ a \in \mathcal{G}^{\times}(H,i+1)$, so $\exists N \in \mathbb N$ such that $a^N \in \mathcal{F}(H,i)$. In particular, $a^N \in k^{\times}(D \cap H)$. On the other hand, it follows from Lemma 1.2 in \cite{Ho2} together with the various definitions involved, that $a$ must be contained in $k^{\times}(D \cap H)$. Hence by induction, $\mathcal{G}^{\times}(H,n) \subset k^{\times}(D \cap H)$ for arbitrary positive integer $n$.
\end{proof}
\section{Reconstruction of Number Fields}
In this section we shall prove Theorem 1. In order to achieve this, we have to assume $m\geq 3$, because Proposition 5.4 only works for $m\geq3$, and Proposition 5.4 is essential in order to prove Lemma 5.5. Nevertheless, there are still some results in this section which hold true for $m=0$ (this will be specified in their statements). Throughout this section we let $G$ be a profinite group of $\text{GSC}^{m+9}$-type. \par 
Let $D \in \widetilde{\text{Dec}}^{d=1}(G^{m+3})$ (as in section 4, we need $m+3$ in order to apply the statement of Theorem 1.2 for Proposition 5.6). First, we shall reconstruct group-theoretically the field $k(D)_{m+3}$ starting from $G$. \par 
Notice that $d_D = 1$, hence by Proposition 1.1 (i) in \cite{ST1} we have the identification
$$
D \xrightarrow{\sim} \text{Gal}((\mathbb{Q}_{p_D})_{m+3}/\mathbb{Q}_{p_D}).
$$
\begin{lem}
Let $m \geq 0$ and let $D \in \widetilde{\text{Dec}}^{d=1}(G^{m+3})$. The composite
$$
D \hookrightarrow G^{m+3} \to \mathcal{Q} ~(\text{c.f. Proposition 4.7 (ii)})
$$
is injective, where the second arrow is determined by the natural action of $G^{m+3}$ on $\mathcal{F}_{G^{m+3}}(m+4)$ by automorphisms.
\end{lem}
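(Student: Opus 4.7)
The plan is to transport the statement via Proposition 4.7 (i) to one about $G_K^{m+3}$ acting on $\mathbb{Q}_{m+3}$, and then to invoke Proposition 1.1 (i) of \cite{ST1} twice—first for $K$ at $\mathfrak{p}$, then for $\mathbb{Q}$ at $(p_D)$—to identify the relevant map with an injective natural embedding of local into global Galois quotients.

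First, Lemma 4.2 gives $\mathcal{F}(\infty,m+4) = \mathbb{Q}_{m+3}$, and Proposition 4.7 (i) provides an $\alpha_{m+3}$-equivariant field isomorphism $\mathcal{F}_{G^{m+3}}(\infty,m+4) \xrightarrow{\sim} \mathbb{Q}_{m+3}$. Since $\mathbb{Q}$ is the prime field of $\mathbb{Q}_{m+3}$, we have $\text{Aut}_{\text{field}}(\mathbb{Q}_{m+3}) = \text{Gal}(\mathbb{Q}_{m+3}/\mathbb{Q})$; and since $\mathbb{Q}_{m+3}/\mathbb{Q}$ is Galois and $(m+3)$-step solvable, $\mathbb{Q}_{m+3} \subset K_{m+3}$. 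Under these identifications, the composite of the statement becomes the restriction homomorphism
\[
\rho \colon D \to \text{Gal}(\mathbb{Q}_{m+3}/\mathbb{Q}) = G_{\mathbb{Q}}^{m+3},
\]
where $D$ is viewed inside $G_K^{m+3}$ via $\alpha_{m+3}$ and acts on $K_{m+3}$ fixing $\mathbb{Q}$ pointwise. It therefore suffices to prove $\rho$ is injective.

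Next, since $d_D = 1$ we have $K_\mathfrak{p} = \mathbb{Q}_{p_D}$, so Proposition 1.1 (i) of \cite{ST1} applied to $K$ at $\mathfrak{p}$ identifies $D$ with the image of the natural map $G_{\mathbb{Q}_{p_D}}^{m+3} \to G_K^{m+3}$ (the source being realised inside $G_K$ as a decomposition subgroup via a chosen prime of $\overline{\mathbb{Q}}$ above $\mathfrak{p}_{m+3}$); in particular this map is injective. Applied instead to $\mathbb{Q}$ at the prime $(p_D)$, the same proposition shows that the natural map $G_{\mathbb{Q}_{p_D}}^{m+3} \hookrightarrow G_{\mathbb{Q}}^{m+3}$ is likewise injective.

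Finally, unwinding the identifications, $\rho$ coincides with the composite $D \xrightarrow{\sim} G_{\mathbb{Q}_{p_D}}^{m+3} \hookrightarrow G_{\mathbb{Q}}^{m+3}$: indeed, lifting $\sigma \in D$ to $\tilde{\sigma} \in G_{\mathbb{Q}_{p_D}} \subset G_K$, both maps send $\sigma$ to the image of $\tilde{\sigma}$ in $G_{\mathbb{Q}}^{m+3}$, since restricting $\tilde{\sigma}$ to $\mathbb{Q}_{m+3}$ amounts to reducing $\tilde{\sigma}$ modulo $G_{\mathbb{Q}}^{[m+3]}$. The principal obstacle is precisely this bookkeeping of compatibility between the two applications of Proposition 1.1 (i); granting it, injectivity of $\rho$ follows, or equivalently any $\tilde{\sigma} \in G_{\mathbb{Q}_{p_D}}$ representing a kernel element lies in $G_{\mathbb{Q}_{p_D}} \cap G_{\mathbb{Q}}^{[m+3]} = G_{\mathbb{Q}_{p_D}}^{[m+3]} \subset G_K^{[m+3]}$, so that $\sigma$ is trivial.
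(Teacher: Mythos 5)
Your proof is correct and follows essentially the same route as the paper: transport via Proposition 4.7\,(i) to the Galois-theoretic statement that $D_{\mathfrak p}\hookrightarrow G_K^{m+3}\to G_{\mathbb Q}^{m+3}$ is injective, and then use $d_D=1$ (so $K_{\mathfrak p}=\mathbb Q_{p_D}$) to identify this with an injection of a decomposition subgroup of $G_{\mathbb Q}^{m+3}$. The only difference is one of exposition: you make the two applications of Proposition 1.1\,(i) of \cite{ST1} explicit (once for $K$ at $\mathfrak p$, once for $\mathbb Q$ at $p_D$), whereas the paper compresses this into the single observation that the map $G_K^{m+3}\to G_{\mathbb Q}^{m+3}$, arising from the field embedding $\mathbb Q\hookrightarrow K$, carries $D_{\mathfrak p}$ isomorphically onto a decomposition subgroup of $G_{\mathbb Q}^{m+3}$.
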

\begin{proof}
By Proposition 4.7 (i), we have the following commutative diagram
$$
\begin{tikzcd}
D \arrow[r,hookrightarrow] \arrow[d,"\alpha_D"] & G^{m+3} \arrow[r] \arrow[d,"\alpha"] & \mathcal{Q} \arrow[d,"\wr"] \\
D_{\mathfrak{p}} \arrow[r,hookrightarrow] & G_{K}^{m+3} \arrow[r] & G_{\mathbb{Q}}^{m+3}
\end{tikzcd}
$$
where $\alpha_D$ is also an isomorphism by Theorem 1.25 in \cite{ST1} and $\mathfrak{p} \in \mathscr{P}_{K_{m+3}}^{\text{fin,d=1}}$. Moreover, the arrow $G_K^{m+3} \to G_{\mathbb{Q}}^{m+3}$ is defined to be the composite 
$$
G_K^{m+3} \hookrightarrow \text{Aut}(K_{m+3}) \twoheadrightarrow \text{Aut}(\mathbb{Q}_{m+3}) \xrightarrow{\sim} G_{\mathbb{Q}}^{m+3}
$$
where the first arrow is the natural inclusion, the second surjection is obtained by restricting automorphisms of $K_{m+3}$ to $\mathbb{Q}_{m+3}$ and the third isomorphism is the identity. \par
Thus, verifying that the upper composite is injective is equivalent to verifying the lower composite is injective. \par 
By definition the map $G_K^{m+3} \to G_{\mathbb Q}^{m+3}$ arises from a field embedding $\mathbb{Q} \hookrightarrow K$, hence preserves decomposition subgroups. In particular, since $d_\mathfrak{p} = 1$, $D_{\mathfrak{p}}$ is mapped isomorphically onto some decomposition subgroup of $G_{\mathbb{Q}}^{m+3}$ via $G_K^{m+3} \to G_{\mathbb{Q}}^{m+3}$. Therefore, the composite
$$
D_{\mathfrak{p}} \hookrightarrow G_K^{m+3} \to G_{\mathbb{Q}}^{m+3}
$$
is injective. 
\end{proof}
\begin{prop}
Let $m \geq 0$ and let $D \in \widetilde{\text{Dec}}^{d=1}(G^{m+3})$. There is a group-theoretic reconstruction of the field $k(D)_{m+3}$ starting from $G$. In particular, we have a $\alpha_{m+3}|_D$-equivariant isomorphism of fields
$$
k(D)_{m+3} \xrightarrow{\sim} (\mathbb{Q}_{p})_{m+3}.
$$
\end{prop}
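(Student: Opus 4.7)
The plan is to construct $k(D)_{m+3}$ as a suitable direct limit of local multiplicative groups, equipped with a topological field structure obtained (via the global reconstruction of $\mathcal{F}_{G^{m+3}}(\infty,m+4) \cong \mathbb{Q}_{m+3}$) from the globally reconstructed data.

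First, I would combine Lemma 5.1 with Proposition 4.7(ii) to identify $D$ with its image under the natural map $G^{m+3} \to \mathcal{Q} \cong G_{\mathbb{Q}}^{m+3}$. Since $d_D = 1$, this image is a decomposition subgroup at some prime $\mathfrak{p}_{\mathbb{Q}}$ of $\mathbb{Q}_{m+3}$ lying above $p_D$. Applying Proposition 1.1(i) of \cite{ST1} to $\mathbb{Q}_{p_D}$ then yields a canonical, Galois-equivariant identification $(\mathbb{Q}_{m+3})_{\mathfrak{p}_{\mathbb{Q}}} \xrightarrow{\sim} (\mathbb{Q}_{p_D})_{m+3}$. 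Hence the problem is reduced to reconstructing this completion group-theoretically, together with its $D$-action.

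For the construction, I would consider, as $H$ ranges over open subgroups of $G^{m+3}$ (corresponding to finite extensions $L/K$ in $K_{m+3}$), the cofinal family $\{D \cap H\}_H$ of open subgroups of $D$. Each $D \cap H$ corresponds to a finite subextension $L_{\mathfrak{p}_L} \subset (\mathbb{Q}_{p_D})_{m+3}$ of $k(D) \cong \mathbb{Q}_{p_D}$, where $\mathfrak{p}_L$ is the image of the prime corresponding to $D$ in $L$. By Theorem 2.6(viii) and Definition 3.1 applied within the GSC-type subgroup $\widetilde{H}^{m+3}$, one reconstructs the multiplicative group $k^{\times}(D \cap H) \xrightarrow{\sim} L_{\mathfrak{p}_L}^{\times}$ as a topological group, together with the units $\mathcal{O}^{\times}(D \cap H)$, and hence the valuation topology. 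One then promotes $k^{\times}(D \cap H) \cup \{0\}$ to a topological field by adapting the cyclotome-endomorphism construction of Proposition 4.4 — using that $k(D)$ is already reconstructed as a field and that $L_{\mathfrak{p}_L}$ is a finite extension of it whose multiplicative-plus-valuation structure is available. Setting
$$
k(D)_{m+3} := \varinjlim_{H} \bigl(k^{\times}(D \cap H) \cup \{0\}\bigr),
$$
with transition maps induced by the inclusions $D \cap H' \subseteq D \cap H$ for $H' \subseteq H$, yields a topological field carrying a natural $D$-action (restricted from the $G^{m+3}$-action). Cofinality of $\{D \cap H\}_H$ in the open subgroups of $D$ guarantees that the limit exhausts all finite subextensions of $k(D)_{m+3}/k(D)$, giving the desired isomorphism with $(\mathbb{Q}_{p_D})_{m+3}$.

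The $\alpha_{m+3}|_D$-equivariance follows from the functoriality of all the reconstructions invoked (Theorem 2.6, Propositions 3.12, 4.7, 4.9, and 4.4). The principal obstacle is the field-promotion step for each $D \cap H$: since $D \cap H$ generally has residue degree $> 1$, Proposition 4.4 does not directly apply, and one must either adapt its cyclotome-based argument to reconstruct $L_{\mathfrak{p}_L}$ as an algebra over the already reconstructed $\mathbb{Q}_{p_D}$, or else transfer the ring structure from the globally reconstructed $\mathcal{F}(H, m+4)$ via the injection of Proposition 4.9 and extend by continuity in the valuation topology. Verifying that the resulting local field structures are compatible with the transition maps in the direct limit and with the $D$-action — so that the whole package assembles into a well-defined $\alpha_{m+3}|_D$-equivariant topological field — constitutes the bulk of the technical work.
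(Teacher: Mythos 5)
Your high-level strategy --- take a direct limit of reconstructed local multiplicative groups $k^{\times}(\cdot)$, transfer an additive structure from the globally reconstructed copy of $\mathbb{Q}_{m+3}$, and extend by continuity --- is the same as the paper's. And you correctly flag the field-promotion step as the real work. But the two concrete routes you propose for that step both have a genuine gap, and it is a density gap that the paper's proof is specifically designed to avoid.

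Route (b), transferring addition from $\mathcal{F}(H,m+4)$ via the injection of Proposition 4.9 and ``extending by continuity,'' fails in general because $\mathcal{F}(H,m+4)$ need not be dense in the local field $L_{\mathfrak{p}_L}$. Indeed $\mathcal{F}(L,m+4)$ is by construction a subfield of $L\cap\mathbb{Q}_{m+3}$, and for $K\neq\mathbb{Q}$ there is no reason the closure of $L\cap\mathbb{Q}_{m+3}$ inside $L_{\mathfrak{p}_L}$ should be all of $L_{\mathfrak{p}_L}$; the continuity extension only recovers addition on the completion of $\mathcal{F}(H,m+4)$ at the relevant prime, which can be a proper subfield. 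Route (a), adapting the cyclotome-endomorphism argument of Proposition 4.4, you already acknowledge does not apply once the residue degree exceeds $1$, and no substitute argument is given. So the proposal as written does not close the gap it identifies.

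The paper's key move, which your proposal is missing, is to index the limit not by open subgroups $H$ of $G^{m+3}$ but by a fundamental system of open \emph{characteristic} subgroups $D_n$ of $D$, and to pair each $D_n$ with an open normal subgroup $\mathcal{Q}_n$ of $\mathcal{Q}\cong G_{\mathbb{Q}}^{m+3}$ (the full automorphism group of $\mathcal{F}_{G^{m+3}}(\infty,m+4)$, not merely the image of $G^{m+3}$) satisfying $\mathcal{Q}_n\cap D=D_n$. The invariant subfield $\mathcal{M}^{*}(\mathcal{Q}_n):=\mathcal{F}_{G^{m+3}}(\infty,m+4)^{\mathcal{Q}_n}$ is then a number field whose decomposition group at the prime corresponding to $D$ is exactly $D/D_n$, so it is genuinely dense in $k(D_n)$; this is what makes the approximation-by-sequences definition of $+_{k(D_n)}$ well-posed. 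The additive structure is then passed to the limit $k_{\times}(D)_{m+3}=\varinjlim_n k_{\times}(D_n)$, and the construction is shown to be independent of the auxiliary family $\{\mathcal{Q}_n\}$. In short: you need to move from invariants under subgroups of $G^{m+3}$ to invariants under subgroups of $\mathcal{Q}$ to get the density that your continuity argument requires.
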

\begin{proof}
We adapt the proof of Proposition 5.8 in \cite{Ho1} to our case. \par 
The group $D$ is isomorphic to a quotient of $G_{\mathbb{Q}_{p_D}}$. Since $G_{\mathbb{Q}_{p_D}}$ is topologically finitely generated by Theorem 7.5.14 in \cite{NSW}, $D$ is also topologically finitely generated. \par 
By Proposition 2.5.1 in \cite{RZ}, $D$ admits a fundamental system of open characteristic neighborhoods of $1$, i.e. a series of open characteristic subgroups
$$
D := D_0 \supset D_1 \dots \supset D_n \supset \dots
$$
such that $\bigcap_{i \geq 0} D_i = \{1\}$ where $i \in \mathbb{Z}_{\geq 0}$. Then we write 
$$
\mathcal{Q} = \mathcal{Q}_0 \supset \mathcal{Q}_1 \supset \mathcal{Q}_2 \dots \supset \mathcal{Q}_n \supset \dots
$$
where $\mathcal{Q}_n$ is an open subgroup of $\mathcal{Q}$ such that the equality $D_n = \mathcal{Q}_n \cap D$ holds. In particular, the family $\{\mathcal{Q}_n\}_{n \geq 1}$ is not unique.
By possibly replacing $\mathcal{Q}_n$ by some open normal subgroup $\mathcal{Q}_n'$ of $\mathcal{Q}$ such that the equality $D_n = \mathcal{Q}_n' \cap D$ holds, we may assume that the $\{\mathcal{Q}_n\}_{n \geq 1}$ are open normal subgroups of $\mathcal{Q}$. \par 
By Proposition 4.7, we have reconstructed the field $\mathcal{F}_{G^{m+3}}(\infty,m+4)$ and we can use its multiplicative group. In particular, we write
$$
\mathcal{M}(\mathcal{Q}_n) := (\mathcal{F}_{G^{m+3}}(\infty,m+4)^{\times})^{\mathcal{Q}_n}.
$$
Now by Theorem 2.6, we can write $k^{\times}(D)_{m+3} := \varinjlim_{H} k^{\times}(H)$ where $H$ ranges over all open subgroups of $D$. Then we write
$$
\mathcal{M}(D_n) := (k^{\times}(D)_{m+3})^{D_n}.
$$
We have the following commutative diagram
$$
\begin{tikzcd}
\mathcal{F}_{G^{m+3}}(\infty,m+4)^{\times} \arrow[r,hookrightarrow] & k^{\times}(D)_{m+3} \\
(\mathcal{F}_{G^{m+3}}(\infty,m+4)^{\times})^{D_n} \arrow[u,hookrightarrow] \arrow[r,hookrightarrow] & (k^{\times}(D)_{m+3})^{D_n} \arrow[u,hookrightarrow] \\
(\mathcal{F}_{G^{m+3}}(\infty,m+4)^{\times})^{\mathcal{Q}_n} \arrow[u,hookrightarrow] &
\end{tikzcd}
$$
where the top horizontal arrow is defined by taking inductive limit on $H$ of the natural inclusion map presented in Proposition 4.8 for $n = m+4$, the bottom horizontal arrow is the injective map induced by the top horizontal arrow, and the vertical arrows are natural inclusions. In particular, we have an inclusion $\mathcal{M}(\mathcal{Q}_n) \hookrightarrow \mathcal{M}(D_n)$.
\par 
On the other hand, we have the field structure of $\mathcal{F}_{G^{m+3}}(\infty,m+4)$. Thus, if we write $\mathcal{M}^*(\mathcal{Q}_n) := \mathcal{M}(\mathcal{Q}_n) \cup \{0\}$ (where $0$ is defined to be the unique element in $\mathcal{F}_{G^{m+3}}(\infty,m+4) \setminus \mathcal{F}_{G^{m+3}}(\infty,m+4)^{\times}$), we can equip $\mathcal{M}^*(\mathcal{Q}_n)$ with the field structure determined by $\mathcal{F}_{G^{m+3}}(\infty,m+4)$. In particular, $\mathcal{M}^*(\mathcal{Q}_n)$ is identified with the fixed subfield $\mathcal{F}_{G^{m+3}}(\infty,m+4)^{\mathcal{Q}_n}$ of $\mathcal{F}_{G^{m+3}}(\infty,m+4)$. \par 
Then we apply the method in Proposition 5.8 (1) in \cite{Ho1} in order to obtain the local fields $k(D_n)$. More precisely, for each $n$, we write $k_{\times}(D_n) := k^{\times}(D_n) \cup \{0\}$ where $k^{\times}(D_n) := (k^{\times}(D)_{m+3})^{D_n}$. Then we define the binary map
$$
+_{k(D_n)} : k_{\times}(D_n) \times k_{\times}(D_n) \to k_{\times}(D_n)
$$
such that the following hold:
\begin{itemize}
\item $+_{k(D_n)}(a,0) = +_{k(D_n)}(0,a) = a$ for all $a \in k_{\times}(D_n)$.
\item We write 
$$
\mathbf{Z}(D_n) := \{(a,b) \in k^{\times}(D_n) \times k^{\times}(D_n): ab^{-1} \neq 1~\text{but}~(ab^{-1})^2 = 1\}
$$
and 
$$
\mathbf{S}(D_n) := (k^{\times}(D_n) \times k^{\times}(D_n)) \setminus \mathbf{Z}(D_n).
$$
Then $+_{k(D_n)}(\mathbf{Z}(D_n)) = \{0\}$, where $+_{k(D_n)}(\mathbf{Z}(D_n))$ is the image of $\mathbf{Z}(D_n)$ under the binary map $+_{k(D_n)}$. 
\item For $(a,b) \in \mathbf{S}(D_n)$, it follows from the fact that number fields are dense inside their (metric) completions, that there is a sequence of elements
$$
(a_i,b_i) \in \mathcal{M}(\mathcal{Q}_n) \cap \mathbf{S}(D_n)
$$
such that $(a_i,b_i)$ converges to $(a,b)$. Then we write
$$
+_{k(D_n)}(a,b) := \lim_{i \to \infty} (a_i+b_i)
$$
where $a_i+b_i$ is taken inside $\mathcal{M}^*(\mathcal{Q}_n)$ for each $i$.
\end{itemize}
Thus we obtain a field structure on $k_{\times}(D_n)$ for each $n$. Consider the following diagram: 
$$
\begin{tikzcd}
k_{\times}(D_n) \times k_{\times}(D_n) \arrow[r,"+_{k(D_n)}"] \arrow[d,hookrightarrow] & k_{\times}(D_n) \arrow[d,hookrightarrow] \\
k_{\times}(D_{n+1}) \times k_{\times}(D_{n+1}) \arrow[r,"+_{k(D_{n+1})}"] & k_{\times}(D_{n+1}).
\end{tikzcd}
$$
We claim that this diagram is commutative. Indeed, both vertical maps are natural inclusions, it holds that for any $(a,b) \in k_{\times}(D_n) \times k_{\times}(D_n)$, $(a,b) \in k_{\times}(D_{n+1}) \times k_{\times}(D_{n+1})$. Hence one may conclude that the above diagram is commutative. We define $k_{\times}(D)_{m+3} := \varinjlim_n k_{\times}(D_n)$ where the transition maps are taken to be natural inclusions. By the commutativity of the above diagram, $+_{k(D_n)}$ is compatible with the transition maps for each $n \geq 1$. Write $+_{k(D)_{m+3}}$ for the inductive limit of $+_{k(D_n)}$, hence we obtain the binary map:\par 
$$
+_{k(D)_{m+3}}: k_{\times}(D)_{m+3} \times k_{\times}(D)_{m+3} \to k_{\times}(D)_{m+3}.
$$
It follows from the compatibility of $+_{k(D_n)}$ and the inductive limit, and the field structures of $k(D_n)$ for each $n \geq 1$, that $k_{\times}(D)_{m+3}$ equipped with $+_{k(D)_{m+3}}$ is indeed a field. Moreover, since $d_D = 1$, we have an $\alpha_{m+3}|_D$-equivariant isomorphism of fields $k(D)_{m+3} \xrightarrow{\sim} (\mathbb{Q}_{p_D})_{m+3}$. Finally, one verifies immediately that the construction of $k(D)_{m+3}$ is independent from the choice of the family $\{\mathcal{Q}_n\}_{n \geq 1}$.
\end{proof}
Now we want to characterise the image of $K_{m+3}$ in $k(D)_{m+3}$ in a group-theoretic way, and this is the central step in the proof of Theorem 1.
\begin{defn}
Let $j$ be a positive integer and let $r \geq 2$ be an integer. Let $G$ be a profinite group of $\text{GSC}^{j+r}$-type and $D \in \widetilde{\text{Dec}}^{d=1}(G^j)$. We define the following two subfields of $k(D)_j$:
$$
F[D] \subset F_j[D] \subset k(D)_j
$$
as follows: \par 
(1) $F[D]$ is a finite extension of the minimal subfield of $k(D)$ (i.e. a number field). \par 
(2) $F_j[D]$ is the maximal Galois extension of $F[D]$ contained in $k(D)_j$ such that every finite Galois extension $L/F[D]$ contained in $F_j[D]$ is solvable over $F[D]$ of length at most $j$. \par 
(3) Let $\sigma \in D$. The automorphism $\sigma: k(D)_j \xrightarrow{\sim} k(D)_j$ induces an automorphism $\sigma: F_j[D] \xrightarrow{\sim} F_j[D]$ and restricts to the identity automorphism on $F[D]$. \par 
(4) There exists a continuous isomorphism $\text{Gal}(F_j[D]/F[D]) \xrightarrow{\sim} G^j$ such that the following diagram
$$
\begin{tikzcd}
\text{Gal}(F_j[D]/F[D]) \arrow[r,"\sim"] & G^j \\
D \arrow[u,hookrightarrow] \arrow[ur,hookrightarrow] &
\end{tikzcd}
$$
where the inclusion $D \hookrightarrow \text{Gal}(F_j[D]/F[D])$ is induced by the field embedding $F_j[D] \hookrightarrow k(D)_j$, and $D \hookrightarrow G^j$ is the natural inclusion, is commutative. \par 
We say the pair of subfields $F[D] \subset F_j[D]$ is of \textbf{length $j$ standard type}.
\end{defn}
\begin{prop}
Let $m \geq 3$ be an integer. For each $D \in \widetilde{\text{Dec}}^{d=1}(G^{m})$, length $m$ standard type subfields of $k(D)_{m}$ exist. Moreover, if $F[D] \subset F_m[D]$ is a length $m$ standard type subfields, then $F[D]$ and $K$ (c.f. Definition 2.1) are isomorphic. 
\end{prop}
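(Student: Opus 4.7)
The plan splits into existence and identification.

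For \textbf{existence}, the natural candidate is to take $F[D]$ and $F_m[D]$ to be the images of $K$ and $K_m$ inside $k(D)$ and $k(D)_m$ respectively. Via $\alpha_m$, the element $D\in\widetilde{\text{Dec}}^{d=1}(G^m)$ corresponds to a decomposition subgroup $D_{\mathfrak{p}_m}$ at some prime $\mathfrak{p}_m$ of $K_m$ lying over $\mathfrak{p}\in\mathscr{P}_K$ with $d_{\mathfrak p}=1$. The condition $d_{\mathfrak p}=1$ yields $K_{\mathfrak p}=\mathbb{Q}_{p_D}\cong k(D)$, giving an embedding $\iota_0\colon K\hookrightarrow k(D)$ that extends (via $\mathfrak{p}_m$) to $\iota\colon K_m\hookrightarrow k(D)_m$. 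Setting $F[D]:=\iota_0(K)$ and $F_m[D]:=\iota(K_m)$, I would verify Definition 5.3 conditions (1)--(4); the key point is maximality in (2), since any $m$-step solvable Galois extension of $F[D]$ inside $k(D)_m$ is algebraic over $F[D]\cong K$ and hence contained in $\iota(K_m)$, while (4) follows from $\text{Gal}(K_m/K)=G_K^m\xrightarrow{\alpha_m^{-1}}G^m$ sending $D_{\mathfrak{p}_m}$ to $D\subset G^m$ by construction.

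For the \textbf{identification $F[D]\cong K$}, let $(F[D],F_m[D])$ be any length-$m$ standard type pair. I would first identify $F_m[D]$ with the global $m$-step solvable closure $F[D]_m$ (viewed inside $\overline{k(D)}$ via an embedding extending $F[D]\hookrightarrow k(D)$). Since the decomposition map $G_{k(D)}\to G_{F[D]}$ sends $G_{k(D)}^{[m]}$ into $G_{F[D]}^{[m]}$, the closure $F[D]_m$ already lies inside $k(D)_m$; combined with the maximality in (2) and the fact that $F_m[D]$ is algebraic over $F[D]$, this gives $F_m[D]=F[D]_m$, hence $\text{Gal}(F_m[D]/F[D])=G_{F[D]}^m$. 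Condition (4) then produces an isomorphism $G_{F[D]}^m\xrightarrow{\sim}G^m\xrightarrow{\alpha_m}G_K^m$. Since $m\geq 3$, I would apply Theorem 1.2 with its parameter specialized to $m-3\geq 0$: our level-$m$ isomorphism plays the role of ``$\sigma_{(m-3)+3}$'' in Theorem 1.2, and the induced field isomorphism $\tau_{m-3}\colon K_{m-3}\xrightarrow{\sim}F[D]_{m-3}$ restricts to the required field isomorphism $K\xrightarrow{\sim}F[D]$.

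The \textbf{main obstacle}, and the precise reason the proposition requires $m\geq 3$, is the invocation of Theorem 1.2: that theorem demands a Galois group isomorphism at some level $\tilde m+3$ with $\tilde m\geq 0$ in order to produce a field isomorphism of base number fields. Condition (4) only supplies an isomorphism at level $m$, so one must take $\tilde m=m-3$, forcing $m\geq 3$. The auxiliary identification $F_m[D]=F[D]_m$ is a routine decomposition-group computation once embeddings into $\overline{k(D)}$ are arranged, and the compatibility with decomposition groups tacitly needed when applying Theorem 1.2 is already built into the commutativity of the diagram in condition (4).
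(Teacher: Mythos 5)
Your proposal is correct and takes essentially the same route as the paper: the paper's proof simply asserts that existence "follows immediately from the definition" (which is precisely your natural candidate $F[D]=\iota_0(K)$, $F_m[D]=\iota(K_m)$), and that the identification $F[D]\cong K$ "follows immediately from Theorem 1 in [ST1]," which is exactly the application of Theorem 1.2 (with parameter $m-3\geq 0$) that you spell out after identifying $\operatorname{Gal}(F_m[D]/F[D])$ with $G_{F[D]}^m$. Your write-up is a faithful unpacking of the paper's terse argument, including the correct pinpointing of why $m\geq 3$ is forced by the three-step overhead in Theorem 1.2.
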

\begin{proof}
The first part of the proposition follows immediately from the definition of standard type subfields of length $m$. The second part of the proposition follows immediately from Theorem 1 in \cite{ST1}.
\end{proof}
We wish to show the uniqueness of length $m+3$ standard type subfields of $k(D)_{m+3}$, however, we can only show the uniqueness of length $m$ standard type subfields of $k(D)_m$ starting from length $m+3$ standard type subfields of $k(D)_{m+3}$.
\begin{lem}
Let $m \geq 3$ be an integer. Let $D \in \widetilde{\text{Dec}}^{d=1}(G^{m+3})$, and let $\mathfrak{D} \subset G^m$ be the image of $D$ in $G^m$ via the natural surjection $G^{m+3} \twoheadrightarrow G^m$, hence $\mathfrak{D} \in \widetilde{\text{Dec}}^{d=1}(G^m)$. Then a pair of length $m+3$ standard type subfields of $k(D)_{m+3}$ determines a pair of length $m$ standard type subfields of $k(\mathfrak{D})_m$ by taking the intersection $F_{m+3}[D] \cap k(\mathfrak{D})_m$, and every pair of length $m$ standard type subfields of $k(\mathfrak{D})_m$ arises from a pair of length $m+3$ standard type subfields of $k(D)_{m+3}$.
\end{lem}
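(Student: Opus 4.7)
The plan is to prove both directions: the intersection construction sends length $m+3$ standard-type pairs to length $m$ ones, and every length $m$ pair arises this way from some length $m+3$ pair.

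For the forward direction, let $(F[D], F_{m+3}[D])$ be a length $m+3$ standard-type pair with identification $\text{Gal}(F_{m+3}[D]/F[D]) \cong G^{m+3}$ compatible with $D \hookrightarrow G^{m+3}$ (condition (4)). I would first analyze the intersection $F_m[\mathfrak{D}] := F_{m+3}[D] \cap k(\mathfrak{D})_m$ Galois-theoretically. Since $k(\mathfrak{D})_m \subset k(D)_{m+3}$ is precisely the fixed field of $D^{[m]}$, and the $D$-action on $F_{m+3}[D]$ via $D \hookrightarrow G^{m+3}$ agrees with the restriction of the Galois action on $k(D)_{m+3}$, the intersection equals $F_{m+3}[D]^{D^{[m]}}$. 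Using that $\mathfrak{D}$ is the image of $D$ in $G^m$ and is identified with $D^m$ by Proposition 1.1(i) in \cite{ST1}, one checks $D^{[m]} = D \cap G^{[m+3,m]}$ inside $G^{m+3}$, so the intersection always contains $\widetilde{F_m} := F_{m+3}[D]^{G^{[m+3,m]}}$, which by Galois correspondence is Galois over $F[D]$ with Galois group $G^{m+3}/G^{[m+3,m]} = G^m$.

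The crucial step is to show the intersection coincides with $\widetilde{F_m}$, equivalently that the normal closure of $D^{[m]}$ in $G^{m+3}$ is the full normal subgroup $G^{[m+3,m]}$. Under the identification $G^{m+3} \cong G_K^{m+3}$ with $K \cong F[D]$, so that $K_{m+3}/K_m$ is the maximal $3$-step solvable extension of $K_m$ (hence $\text{Gal}(K_{m+3}/K_m) \cong G_{K_m}^3$), the conjugates $gD^{[m]}g^{-1}$ correspond to decomposition subgroups in $K_{m+3}/K_m$ at all primes of $K_{m+3}$ above the fixed prime $\mathfrak{p}$ of $K$ corresponding to $\mathfrak{D}$. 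I would establish the required density by a local-global argument combining Chebotarev-type density with the class-field-theoretic description of $G_{K_m}^3$ in terms of $K_m$-ideles. Once this equality is in hand, the remaining standard-type conditions are verified by diagram chasing: condition (4) uses that the surjection $G^{m+3} \twoheadrightarrow G^m$ sends $D$ to $\mathfrak{D}$, and maximality holds because any Galois extension of $F[D]$ in $k(\mathfrak{D})_m$ with finite Galois subextensions $m$-step solvable has Galois group a quotient of $G^m$, hence is dominated by $\widetilde{F_m}$.

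For the converse, let $(F[\mathfrak{D}], F_m[\mathfrak{D}])$ be a length $m$ standard-type pair. Since $m \geq 3 \geq 2$, Theorem 1.2 supplies a $\mathfrak{D}$-equivariant isomorphism of this pair with some $(K, K_m)$. Taking the maximal $(m+3)$-step solvable extension yields $K_{m+3}$ and $G_K^{m+3}$. Extending the prime of $K_m$ implicit in the embedding $F_m[\mathfrak{D}] \hookrightarrow k(\mathfrak{D})_m$ to a prime $\mathfrak{P}$ of $K_{m+3}$ gives a decomposition subgroup $D_{\mathfrak{P}} \hookrightarrow G_K^{m+3}$ and a completion embedding $K_{m+3} \hookrightarrow (\mathbb{Q}_{p_D})_{m+3} \cong k(D)_{m+3}$, producing a length $m+3$ standard-type pair. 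The forward direction then recovers $F_m[\mathfrak{D}]$ as the intersection. The main obstacle is the density step in the forward direction: the single-prime normal-closure statement is not purely formal and relies on the specific structure of $(K_m)_3/K_m$ rather than on generic profinite group theory.
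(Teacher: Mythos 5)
Your forward direction contains a genuine logical flaw that the paper's own proof does not share, because the paper does not actually try to prove the ``intersection'' claim as you have read it. You correctly compute that $F_{m+3}[D]\cap k(\mathfrak{D})_m = F_{m+3}[D]^{D^{[m]}}$, and you correctly identify that the candidate length-$m$ field should be $\widetilde{F_m}:=F_{m+3}[D]^{G^{[m+3,m]}}$. But your ``crucial step'' --- showing the normal closure of $D^{[m]}$ in $G^{m+3}$ equals $G^{[m+3,m]}$ --- would not give the equality $F_{m+3}[D]^{D^{[m]}}=\widetilde{F_m}$. Fixed fields detect the subgroup itself, not its normal closure: since $D^{[m]}\subsetneq G^{[m+3,m]}$ strictly (a local decomposition group's $m$-th derived subgroup is tiny inside the global kernel $\mathrm{Gal}(K_{m+3}/K_m)\cong G_{K_m}^3$), we have $F_{m+3}[D]^{D^{[m]}}\supsetneq F_{m+3}[D]^{G^{[m+3,m]}}$ regardless of what the normal closure is. Concretely, any finite subextension $L/F[D]$ of $F_{m+3}[D]$ that is not $m$-step solvable over $F[D]$ but in which the prime determined by $D$ splits completely lies in the intersection yet not in $\widetilde{F_m}$. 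In particular the intersection is not even Galois over $F[D]$ (as $D^{[m]}$ is not normal in $G^{m+3}$), so it cannot satisfy conditions (2)--(4) of Definition~5.3. The ``local--global density'' machinery you propose to invoke (Chebotarev, class field theory for $G_{K_m}^3$) is aimed at the wrong target.

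The paper sidesteps this entirely: its proof defines $\mathscr{F}_m$ directly as the maximal Galois extension of $\mathscr{F}=F[D]$ inside $F_{m+3}[D]$ with all finite subextensions $m$-step solvable over $\mathscr{F}$, i.e.\ $\mathscr{F}_m=F_{m+3}[D]^{G^{[m+3,m]}}$, and then only needs the easy containment $\mathscr{F}_m\subset k(\mathfrak{D})_m$ (which holds since $D^{[m]}\subset G^{[m+3,m]}$). The lemma statement's phrase ``by taking the intersection'' is not what the proof actually produces, and should be read as a loose shorthand for this intrinsic maximality construction; if you read it literally, as you did, the lemma as stated is false for the reason above. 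Your converse direction (pass through Theorem~1.2 to identify the pair with $(K,K_m)$, extend the prime, complete) is in the same spirit as the paper's, though the paper works entirely inside $k(D)_{m+3}$ by taking the maximal $3$-step solvable extension of $F_m[\mathfrak{D}]$ there, which avoids introducing an abstract $K_{m+3}$ and then embedding it.
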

\begin{proof}
To begin with, we show that a pair of length $m+3$ standard type subfields
$$
F[D] \subset F_{m+3}[D] \subset k(D)_{m+3}
$$
determines a pair of length $m$ standard type subfields of $k(\mathfrak{D})_m$. \par
We set $\mathscr{F} := F[D]$, and $\mathscr{F}_m$ to be the maximal Galois extension of $\mathscr{F}$ contained in $F_{m+3}[D]$ such that every intermediate (Galois) subextension $L/\mathscr{F}$ of $\mathscr{F}_m/\mathscr{F}$ is solvable of length at most $m$ over $\mathscr{F}$. Moreover, since $\mathfrak{D} \xrightarrow{\sim} D^m$, one has the commutative diagram
$$
\begin{tikzcd}
k(D)_{m+3} & \\
k(D)_m \arrow[u,hookrightarrow] \arrow[r,equal] & k(\mathfrak{D})_m \arrow[ul,hookrightarrow] \\
k(D) \arrow[u,hookrightarrow] \arrow[r,equal] & k(\mathfrak{D}) \arrow[u,hookrightarrow]
\end{tikzcd}
$$
and hence we have $\mathscr{F} \subset \mathscr{F}_m \subset k(\mathfrak{D})_m = k(D)_m$. Therefore, conditions (1) and (2) in Definition 5.3 are verified. Now take $\alpha \in D \subset \text{Gal}(F_{m+3}[D]/F[D])$, we write $\alpha_m$ for the image of $\alpha$ under the surjection $D \twoheadrightarrow \mathfrak{D}$. By construction, $k(\mathfrak{D})_m = k(D)_{m+3}^{\text{ker}(D \twoheadrightarrow \mathfrak{D})}$. Hence by Galois theory, $\alpha_m$ determines an automorphism $k(\mathfrak{D})_m \xrightarrow{\sim} k(\mathfrak{D})_m$. On the other hand, $\alpha$ determines an automorphism $F_{m+3}[D] \xrightarrow{\sim} F_{m+3}[D]$ that restricts to the identity automorphism of $F[D] = \mathscr{F}$, hence $\alpha_m$ also determines an automorphism of $\mathscr F_m$ which restricts to the identity automorphism on $\mathscr{F}$ and condition (3) is verified. Now consider the profinite group $\text{Gal}(\mathscr{F}_m/\mathscr{F})$. By construction, $\text{Gal}(\mathscr{F}_m/\mathscr{F})$ is identified with the maximal $m$-step solvable quotient of $\text{Gal}(F_{m+3}[D]/F[D])$. Then we consider the following diagram
$$
\begin{tikzcd}[row sep=scriptsize, column sep=scriptsize]
\text{Gal}(F_{m+3}[D]/F[D]) \arrow[rr,"\sim"] \arrow[dd,twoheadrightarrow] & & G^{m+3} \arrow[dd,twoheadrightarrow] \\
& D \arrow[ul,hookrightarrow] \arrow[ur,hookrightarrow] \arrow[dd,twoheadrightarrow, crossing over] & \\
\text{Gal}(\mathscr{F}_m/\mathscr{F}) \arrow[rr,"\sim"] & & G^m \\
& \mathfrak{D} \arrow[ur,hookrightarrow] \arrow[ul, hookrightarrow] \arrow[from=uu,twoheadrightarrow , crossing over] &
\end{tikzcd}
$$
where the inclusion $\mathfrak{D} \hookrightarrow \text{Gal}(\mathscr{F}_m/\mathscr{F})$ is determined by the field embedding $\mathscr{F} \subset k(\mathfrak{D})_m$, and the isomorphism $\text{Gal}(\mathscr{F}_m/\mathscr{F}) \xrightarrow{\sim} G^m$ is induced by the given isomorphism $\text{Gal}(F_{m+3}[D]/F[D]) \xrightarrow{\sim} G^{m+3}$. Then it follows from the commutativity of the top triangle and the three rectangles, that the bottom triangle is also commutative, thus condition (4) is verified. Hence we may conclude that a pair of length $m+3$ standard type subfields of $k(D)_{m+3}$ determines a pair of length $m$ standard type subfields of $k(\mathfrak{D})_m$. \par 
Conversely, given a pair of length $m$ standard type subfields
$$
F[\mathfrak{D}] \subset F_m[\mathfrak{D}] \subset k(\mathfrak{D})_m
$$
we want to find a pair of length $m+3$ standard type subfields
$$
F[D] \subset F_{m+3}[D] \subset k(D)_{m+3}
$$
which induces $F[D] \subset F_m[D]$ as above. We define $\widetilde{F}/F_m[\mathfrak{D}]$ to be the maximal subextension of $k(\mathfrak{D})_{m+3} (= k(D)_{m+3})$ such that every finite extension $L/F_m[\mathfrak{D}]$ contained in $\widetilde{F}$ is solvable over $F_m[\mathfrak{D}]$ of length at most $3$. In particular, $F[\mathfrak{D}] \subset \widetilde{F} \subset k(\mathfrak{D})_{m+3}$ is a pair of subfields satisfying conditions (1) and (2) in Definition 5.3. Let $\sigma \in \mathfrak{D}$ and fix a lifting $\tilde{\sigma} \in D$. Since $\widetilde{F} \hookrightarrow k(\mathfrak{D})_{m+3}$ can be obtained by extending the natural inclusion $F[\mathfrak{D}] \hookrightarrow k(\mathfrak{D})$, $\tilde{\sigma}$ determines an automorphism
$$
\tilde{\sigma}: \widetilde{F} \xrightarrow{\sim} \widetilde{F}
$$
that restricts to the identity automorphism of $F[\mathfrak{D}]$. Thus the pair $(F[\mathfrak{D}],\widetilde{F})$ satisfies condition (3) in Definition 5.3. Now we verify condition (4). Let $F_1/F_m[\mathfrak{D}]$ be an extension such that $F_1/F[\mathfrak{D}]$ is a maximal $m+3$-step solvable extension. Then it holds that every emebdding $F[\mathfrak{D}] \hookrightarrow k(\mathfrak{D})_{m+3}$ extends to $F_1 \hookrightarrow k(\mathfrak{D})_{m+3}$, more precisely, extends to the composite $F_1 \hookrightarrow (F_1)_{\tilde{v}} \xrightarrow{\sim} k(D)_{m+3} = k(\mathfrak{D})_{m+3}$ where $\tilde{v}$ is some non-archimedean prime of $F_1$ determined by $D$. Hence by the construction of $\widetilde{F}$, it holds that the image of $F_1$ in $k(\mathfrak{D})_{m+3}$ coincide with $\widetilde{F}$. Together with condition (3) (verified above), we obtain an embedding $D \hookrightarrow \text{Gal}(\widetilde{F}/F[\mathfrak{D}])$. On the other hand, by Proposition 5.4, we obtain an isomorphism $\text{Gal}(\widetilde{F}/F[\mathfrak{D}]) \xrightarrow{\sim} G^{m+3}$. Thus, the condition (4) in Definition 5.3 follows from the constructions above and $(F[\mathfrak{D}],\widetilde{F})$ is a pair of length $m+3$ standard type subfields.
\end{proof}
\begin{prop}
We keep the assumptions as in Lemma 5.5. Let $F[\mathfrak{D}] \subset F_m[\mathfrak{D}]$ be a pair of length $m$ standard type subfields of $k(\mathfrak{D})_m$ determined by $F_{m+3}[D]$. It holds that $F[\mathfrak{D}] \subset F_m[\mathfrak{D}]$ is unique.
\end{prop}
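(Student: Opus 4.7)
The plan is to invoke Theorem 1.2(ii) at level $m$ to produce a canonical field isomorphism between any two candidate length $m$ standard type pairs in $k(\mathfrak{D})_m$, and then exploit condition (3) of Definition 5.3 at the distinguished completion to force those two pairs to coincide.

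Concretely, let $(F_1[\mathfrak{D}],F_{1,m}[\mathfrak{D}])$ and $(F_2[\mathfrak{D}],F_{2,m}[\mathfrak{D}])$ be two length $m$ standard type pairs in $k(\mathfrak{D})_m$. By the converse half of Lemma 5.5, each arises from a length $m+3$ standard type pair $(F_i[D],F_{i,m+3}[D])\subset k(D)_{m+3}$. The isomorphisms $\alpha_i\colon G^{m+3}\xrightarrow{\sim}\mathrm{Gal}(F_{i,m+3}[D]/F_i[D])$ supplied by condition (4) of Definition 5.3 both identify $D\subset G^{m+3}$ with the decomposition subgroup at the distinguished prime above $p_D$, so the composite $\sigma_{m+3}:=\alpha_2\circ\alpha_1^{-1}$ preserves this decomposition subgroup. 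Since $m\geq 3\geq 2$, Theorem 1.2(ii) yields a unique field isomorphism $\tau_m\colon F_{1,m}[\mathfrak{D}]\xrightarrow{\sim}F_{2,m}[\mathfrak{D}]$ satisfying $\sigma_m(g)=\tau_m^{-1}g\tau_m$ for all $g\in\mathrm{Gal}(F_{1,m}[\mathfrak{D}]/F_1[\mathfrak{D}])$, where $\sigma_m$ is the $m$-step quotient of $\sigma_{m+3}$.

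Because $\sigma_m$ fixes $\mathfrak{D}$ as the decomposition subgroup at the distinguished prime of residue degree one (using $d_D=1$), $\tau_m$ maps the distinguished prime $\tilde v_1$ of $F_{1,m}[\mathfrak{D}]$ onto the distinguished prime $\tilde v_2$ of $F_{2,m}[\mathfrak{D}]$. Both completions being canonically $k(\mathfrak{D})_m$, one obtains an extension $\bar\tau_m\colon k(\mathfrak{D})_m\xrightarrow{\sim}k(\mathfrak{D})_m$ of $\tau_m$. Since any field isomorphism between number fields restricts to the identity on $\mathbb{Q}$, the map $\bar\tau_m|_{\mathbb{Q}_p}$ is the continuous extension of $\mathrm{id}_{\mathbb{Q}}$, hence equals $\mathrm{id}_{\mathbb{Q}_p}$ by density; therefore $\bar\tau_m\in\mathfrak{D}=\mathrm{Gal}(k(\mathfrak{D})_m/\mathbb{Q}_p)$.

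Condition (3) of Definition 5.3, applied to the length $m$ pair $(F_1[\mathfrak{D}],F_{1,m}[\mathfrak{D}])$ with the element $\bar\tau_m\in\mathfrak{D}$, now asserts that $\bar\tau_m$ preserves $F_{1,m}[\mathfrak{D}]$ and restricts to the identity on $F_1[\mathfrak{D}]$. Combined with the fact that $\bar\tau_m|_{F_{1,m}[\mathfrak{D}]}=\tau_m$ sends $F_{1,m}[\mathfrak{D}]$ to $F_{2,m}[\mathfrak{D}]$ and $F_1[\mathfrak{D}]$ to $F_2[\mathfrak{D}]$, we deduce $F_{1,m}[\mathfrak{D}]=F_{2,m}[\mathfrak{D}]$ and $F_1[\mathfrak{D}]=F_2[\mathfrak{D}]$, as required. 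The most delicate step is establishing that $\bar\tau_m$ is a well-defined continuous automorphism of $k(\mathfrak{D})_m$ lying in $\mathfrak{D}$; once that is in hand, condition (3) closes the argument at once.
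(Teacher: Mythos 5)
Your argument is correct in outline but takes a genuinely different route from the paper, and the one step you flag as "most delicate" is exactly where the paper's authors chose a different strategy.

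\textbf{Where you agree with the paper.} Both you and the paper first pass through Lemma 5.5 (converse direction) to obtain length $m+3$ standard pairs, form the composite isomorphism between the resulting Galois groups, and invoke Theorem 1.2(ii) (valid since $m\geq 3\geq 2$) to obtain a \emph{unique} field isomorphism $\tau_m$ between the two candidate $F_m[\mathfrak D]$'s.

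\textbf{Where you diverge.} The paper then cites Lemma 3.9 (the Kummer-theoretic identification $H^1(G_K^m,\mu_n)\cong K^\times/(K^\times)^n$) together with the argument of Lemma 3.6(ii) in Hoshi's paper to promote the abstract isomorphism to equality of subfields of $k(\mathfrak D)_m$. You instead argue that $\tau_m$ is forced to respect the distinguished local embedding: because the condition (4) isomorphisms of Definition 5.3 both identify $D$ with the decomposition subgroup at the distinguished prime, $\sigma_m$ carries one decomposition subgroup to the other, so by self-normalization of decomposition groups $\tau_m$ matches distinguished primes; hence $\tau_m$ extends continuously to $\bar\tau_m\in\operatorname{Gal}(k(\mathfrak D)_m/\mathbb Q_p)\cong\mathfrak D$, and then condition (3) of Definition 5.3 closes the argument instantly.

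\textbf{What each buys.} Your approach is more self-contained—it avoids reaching into Hoshi's paper—and it makes visible the structural role of conditions (3) and (4) in Definition 5.3, which the paper's proof uses only implicitly. The paper's route offloads the hard work onto an already-established lemma. The price you pay is the construction of $\bar\tau_m$, which you correctly flag as the delicate point. For the record, that step does hold: $F_{2,m}[\mathfrak D]\cdot\mathbb Q_p=k(\mathfrak D)_m$ by Proposition 1.1(i) of \cite{ST1}, so $F_{2,m}[\mathfrak D]$ is dense in $k(\mathfrak D)_m$; $\tau_m$ is valuation-preserving (it matches the distinguished primes) hence an isometry; its unique continuous extension fixes $\mathbb Q_p$ pointwise because $\tau_m|_{\mathbb Q}=\operatorname{id}$ and $\operatorname{Aut}(\mathbb Q_p)$ is trivial; and Krasner's lemma guarantees that the extension stabilizes $k(\mathfrak D)_m$ (the algebraic closure of $\mathbb Q_p$ inside $\widehat{k(\mathfrak D)_m}$ is $(\mathbb Q_p)_m$). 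If you intend to present your version, you should spell out these points, as nothing in the paper asserts them.

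One minor inaccuracy: the parenthetical "(using $d_D=1$)" is attached to the claim that $\sigma_m$ matches the decomposition subgroups, but that follows purely from condition (4) of Definition 5.3; $d_D=1$ is used later, to make $k(\mathfrak D)=\mathbb Q_p$ so that $\bar\tau_m$ landing in $\operatorname{Gal}(k(\mathfrak D)_m/k(\mathfrak D))$ is the same as fixing $\mathbb Q_p$.
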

\begin{proof}
This proposition follows from Lemma 3.6 (ii) in \cite{Ho2} and Lemma 3.9 together with Theorem 1.2 and Lemma 5.5. More precisely, if
$$
(F[D]^{\circ},F_{m+3}[D]^{\circ}) ~\text{and}~(F[D]^{\bullet},F_{m+3}[D]^{\bullet})
$$
are two pairs of length $m+3$ standard type subfields, then Lemma 5.5 together with Theorem 1.2 implies that the length $m$ standard type subfields
$$
(F[\mathfrak{D}]^{\circ},F_m[\mathfrak{D}]^{\circ})~\text{and}~(F[\mathfrak{D}]^{\bullet},F_m[\mathfrak{D}^{\bullet}])
$$
must be isomorphic. Then we can use Lemma 3.9 in order to apply the arguments in the proof of Lemma 3.6 (ii) in \cite{Ho2} and obtain the uniqueness of the pair of length $m$ standard type subfields in Lemma 5.5.
\end{proof} 
We may now conclude that there is a group-theoretic reconstruction of length $m$ standard type subfields of $k(\mathfrak{D})_m$ for each $\mathfrak{D}$ starting from $G(\xrightarrow{\sim} G_K^{m+9})$ (and we need to take $m\geq 3$ in order to use Proposition 5.6). More precisely, for each $\mathfrak{D} \in \widetilde{\text{Dec}}^{d=1}(G^m)$, we characterised a subfield $F_m[\mathfrak{D}]$ of $k(\mathfrak{D})_m$ which is isomorphic to $K_m$ and also a number field $F[\mathfrak{D}]$ which is isomorphic to $K$. However, the isomorphism $F_m[\mathfrak{D}] \xrightarrow{\sim} K_m$ is not natural, since it depends on the choice of $\mathfrak{D}$.  \par 
Now we are ready to prove Theorem 1.
\begin{Thm}
Let $m \geq 3$, and let $G$ be a profinite group of $\text{GSC}^{m+9}$-type. Then one can group-theoretically reconstruct a field $F_m(G^m)$ with the action of $G^m$ and the fixed subfield $F(G^m) := F_m(G^m)^{G^m}$ which is a number field, such that: \par
(1) We have a group isomorphism $\text{Gal}(F_m(G^m)/F(G^m)) \xrightarrow{\sim} G_K^{m}$. \par
(2) $F_m(G^m)$ and $F(G^m)$ fit into the following commutative diagram:
$$
\begin{tikzcd}
F_m(G^m) \arrow[r,"\sim"] & K_m \\
F(G^m) \arrow[u,hookrightarrow] \arrow[r,"\sim"] & K \arrow[u,hookrightarrow]
\end{tikzcd}
$$
where the upper horizontal arrow is an $\alpha_m$-equivariant isomorphism and the vertical arrows are natural inclusions. Moreover, the isomorphisms in the diagram are functorial with respective to the isomorphy type of $G$. (More precisely, given an isomorphism $G_0 \xrightarrow{\sim} G$, there is a uniquely determined commutative diagram of fields
$$
\begin{tikzcd}[row sep=scriptsize, column sep=scriptsize]
F_m(G^m) \arrow[rr,"\sim"]  & & F_m(G_0^m)  \\
& K_m \arrow[ul,"\sim"] \arrow[ur,"\sim"] \arrow[from=dd,hookrightarrow, crossing over] & \\
F(G^m) \arrow[uu,hookrightarrow] \arrow[rr,"\sim"] & & F(G_0^m) \arrow[uu,hookrightarrow] \\
& K \arrow[ur,"\sim"] \arrow[ul, "\sim"] \arrow[uu,hookrightarrow , crossing over] &
\end{tikzcd}
$$
where the top arrows are equivariant with respective to the isomorphisms $G_0 \xrightarrow{\sim} G$ and the given isomorphisms $G \xrightarrow{\sim} G_K^{m+9}$ and $G_0 \xrightarrow{\sim} G_K^{m+9}$.)
\end{Thm}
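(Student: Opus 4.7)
The plan is to leverage Proposition 5.6 directly. For any choice of $\mathfrak{D}_0 \in \widetilde{\text{Dec}}^{d=1}(G^m)$, Proposition 5.6 supplies a unique length $m$ standard type pair $(F[\mathfrak{D}_0], F_m[\mathfrak{D}_0])$ sitting inside $k(\mathfrak{D}_0)_m$, and condition (4) of Definition 5.3 equips $F_m[\mathfrak{D}_0]$ with a $G^m$-action (extending the embedding $\mathfrak{D}_0 \hookrightarrow G^m$) via a canonical isomorphism $\text{Gal}(F_m[\mathfrak{D}_0]/F[\mathfrak{D}_0]) \xrightarrow{\sim} G^m$. I would \emph{define} $F_m(G^m) := F_m[\mathfrak{D}_0]$ with this $G^m$-action, and $F(G^m) := F[\mathfrak{D}_0]$ as the fixed subfield, which is a number field by Proposition 5.4.

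The bulk of the work is to verify that this construction is independent of the choice of $\mathfrak{D}_0$ and that the commutative square in the conclusion holds. For independence, given another choice $\mathfrak{D}_1$, both $F_m[\mathfrak{D}_i]$ are isomorphic to $K_m$ (Proposition 5.4); the uniqueness clause of Theorem 1.2 (applied through $\alpha_{m+3}: G^{m+3} \xrightarrow{\sim} G_K^{m+3}$ obtained from the ambient $\text{GSC}^{m+9}$-datum) then yields a \emph{unique} $G^m$-equivariant isomorphism $F_m[\mathfrak{D}_0] \xrightarrow{\sim} F_m[\mathfrak{D}_1]$, and this is precisely where $m \geq 3$ (equivalently, the three extra layers of solvability) is essential. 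For the commutative square, the isomorphism $F_m(G^m) \xrightarrow{\sim} K_m$ is built into the standard type data via Proposition 5.4, and restricts by construction to $F(G^m) \xrightarrow{\sim} K$. Galois-equivariance with respect to $\alpha_m$ is encoded in condition (4) of Definition 5.3 together with the commutative diagram displayed in the proof of Lemma 5.1.

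For the functoriality assertion, given an isomorphism $\phi: G_0 \xrightarrow{\sim} G$, every ingredient of the construction--the Kummer container $\mathcal{H}^{\times}(G^1)$, the local fields $k(\mathfrak{D})_m$ of Proposition 5.2, the dec-subsets $\widetilde{\text{Dec}}^{d=1}(G^m)$, and the standard type subfields of Definition 5.3--is described purely group-theoretically, so $\phi$ transports it to the corresponding datum for $G_0$ and yields an isomorphism $F_m(G^m) \xrightarrow{\sim} F_m(G_0^m)$ restricting to $F(G^m) \xrightarrow{\sim} F(G_0^m)$. Commutativity of the cube then reduces to the two triangles $K_m \xrightarrow{\sim} F_m(G^m) \xrightarrow{\sim} F_m(G_0^m)$ versus $K_m \xrightarrow{\sim} F_m(G_0^m)$, and the equality of these two composites is again guaranteed by the uniqueness clause of Theorem 1.2 applied to $G_K^{m+3}$.

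The main obstacle I anticipate is precisely the independence of $\mathfrak{D}_0$ and the rigidification of the comparison isomorphisms: Proposition 5.6 only gives the uniqueness of the pair of subfields \emph{inside} the fixed local field $k(\mathfrak{D})_m$, but different choices of $\mathfrak{D}$ live in different local fields, and there is no obvious a priori compatibility. The fact that the induced $G^m$-equivariant identifications across all $\mathfrak{D}$ form a coherent system, rather than merely existing, is what forces the use of Theorem 1.2 in the full $m+3$-step strength, and hence the hypothesis $m \geq 3$.
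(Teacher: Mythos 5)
Your proposal diverges from the paper's proof in a genuinely structural way, and the divergence exposes a gap that the paper's construction is specifically designed to avoid.

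The paper does not fix a single $\mathfrak{D}_0$. Instead it constructs $F_m(G^m)$ as the subring of \emph{compatible tuples} inside $\prod_{\mathfrak{D}} F_m[\mathfrak{D}]$, where the compatibility is enforced by the unique comparison isomorphisms $\tau_{\mathfrak{D},\mathfrak{E}}$ coming from Theorem 1.2 applied to the $\sigma^m_{\mathfrak{D},\mathfrak{E}}$. This builds the object out of all local data simultaneously, and the well-definedness, $G^m$-stability, and functoriality are then verified \emph{for that product object}, not for a chosen representative. Your approach of setting $F_m(G^m) := F_m[\mathfrak{D}_0]$ and arguing ``independence'' after the fact is a different decomposition of the problem.

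The genuine gap is the phrase ``via a canonical isomorphism $\text{Gal}(F_m[\mathfrak{D}_0]/F[\mathfrak{D}_0]) \xrightarrow{\sim} G^m$.'' Definition 5.3(4) only asserts the \emph{existence} of such an isomorphism making the triangle with $D \hookrightarrow G^j$ commute; nothing in the definition asserts its uniqueness. Two such isomorphisms can differ by an automorphism of $G^m$ that restricts to the identity on $\mathfrak{D}_0$, and it is not obvious (nor is it proved anywhere in the paper) that the centralizer condition is strong enough to rule these out. Consequently the $G^m$-action you propose to install on $F_m[\mathfrak{D}_0]$ is not canonically defined, which undermines the well-definedness of the construction \emph{before} you even get to the question of independence of $\mathfrak{D}_0$. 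The paper's product construction sidesteps exactly this issue: although the individual $\sigma^{m+3}_{D,E}$ are chosen, the induced $\tau_{\mathfrak{D},\mathfrak{E}}$ is made \emph{unique} by Theorem 1.2 (ii) once $\sigma^m_{\mathfrak{D},\mathfrak{E}}$ is fixed, and the compatible-tuples subring is cut out by those $\tau$'s across all pairs, which is far more rigid than a single fibre.

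You do flag the obstacle at the end (``the rigidification of the comparison isomorphisms''), which is the right instinct, but your proposed resolution -- invoking the uniqueness clause of Theorem 1.2 to get a unique $G^m$-equivariant isomorphism $F_m[\mathfrak{D}_0] \xrightarrow{\sim} F_m[\mathfrak{D}_1]$ -- presupposes that you already have a canonical $\sigma^m$ comparing the two Galois groups, and this is precisely what Definition 5.3(4) does not supply. To salvage your approach you would either need a separate rigidity lemma showing the Definition 5.3(4) isomorphism is unique, or you would need to fall back on the paper's product construction, which manufactures canonicity globally rather than locally.
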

\begin{proof}
First we construct the fields $F_m(G^m)$ and $F(G^m)$. \par 
Let $D$ and $E$ be distinct elements of $\widetilde{\text{Dec}}^{d=1}(G^{m+3})$. Then it follows from Lemma 5.5 that the length $m+3$ standard type subfields
$$
F_{m+3}[D] \subset k(D)_{m+3}~;~ F_{m+3}[E] \subset k(E)_{m+3}
$$
determine uniquely the length $m$ standard type subfields
$$
F_m[\mathfrak{D}] \subset k(\mathfrak{D})_m~;~ F_m[\mathfrak{E}] \subset k(\mathfrak{E})_m
$$
where $\mathfrak{D} := D^m$ and $\mathfrak{E}:=E^m$. \par 
Now consider an isomorphism arising from Definition 5.3 (4) (which exists because both $\text{Gal}(F_{m+3}[D]/F[D])$ and $\text{Gal}(F_{m+3}[E]/F[E])$ are isomorphic to $G^{m+3}$ as in Definition 5.3) 
$$
\sigma_{D,E}^{m+3}: \text{Gal}(F_{m+3}[D]/F[D]) \xrightarrow{\sim} \text{Gal}(F_{m+3}[E]/F[E]),
$$
which induces an isomorphism
$$
\sigma_{\mathfrak{D},\mathfrak{E}}^m : \text{Gal}(F_m[\mathfrak{D}]/F[\mathfrak{D}]) \xrightarrow{\sim} \text{Gal}(F_m[\mathfrak{E}]/F[\mathfrak{E}]),
$$
and hence determines a unique field isomorphism by Theorem 1.2
$$
\tau_{\mathfrak{E},\mathfrak{D}}: F_m[\mathfrak{E}] \xrightarrow{\sim} F_m[\mathfrak{D}]
$$
which restricts to
$$
\tau: F[\mathfrak{E}] \xrightarrow{\sim} F[\mathfrak{D}].
$$
Now consider the ring
$$
\prod_{\mathfrak{D} \in \widetilde{\text{Dec}}^{d=1}(G^m)} F_m[\mathfrak{D}],
$$
and define the subring
$$
F_m(G^m) := \{(\dots,a_{\mathfrak{D}},\dots): \tau_{\mathfrak{D},\mathfrak{E}}(a_{\mathfrak{D}}) = a_{\mathfrak{E}}~\forall \mathfrak{D},\mathfrak{E} \in \widetilde{\text{Dec}}^{d=1}(G^m)\}.
$$
where $\tau_{\mathfrak{D},\mathfrak{E}}$ is the field isomorphism obtained above and $a_{\mathfrak{D}} \in F_m[\mathfrak{D}]$. First, we verify that $F_m(G^m)$ is indeed a field. Let $(\dots,a_{\mathfrak{D}},\dots) \in F_m(G^m)$ be a non-zero element, then $a_{\mathfrak{D}} \neq 0$ for all $\mathfrak{D}$ by the construction of $F_m(G^m)$. On the other hand, $(\dots,a_{\mathfrak{D}},\dots)$ is invertible because each component $a_{\mathfrak{D}}$ is a non-zero element of the field $F_m[\mathfrak{D}]$. Furthermore, one has $\tau_{\mathfrak{D},\mathfrak{E}}(a_{\mathfrak{D}}^{-1}) = a_{\mathfrak{E}}^{-1}$. Hence $F_m(G^m)$ is a field. \par 
Now we define an action of $G^m$ on $\prod_{\mathfrak{D} \in \widetilde{\text{Dec}}^{d=1}(G^m)} F_m[\mathfrak{D}]$ as follows: for each component $F_m[\mathfrak{D}]$, by Definition 5.3, we have an isomorphism $G^m \xrightarrow{\sim}\text{Gal}(F_m[\mathfrak{D}]/F[\mathfrak{D}])$. For each $g \in G^m$, we write $g_{\mathfrak{D}}$ for the image of $g$ via the isomorphism $G^m \xrightarrow{\sim}\text{Gal}(F_m[\mathfrak{D}]/F[\mathfrak{D}])$. We then define the action of $G^m$ on $\prod_{\mathfrak{D} \in \widetilde{\text{Dec}}^{d=1}(G^m)} F_m[\mathfrak{D}]$ by $g(\dots,a_{\mathfrak{D}},\dots) = (\dots,g_{\mathfrak{D}}(a_{\mathfrak{D}}),\dots)$ where $g_{\mathfrak{D}}(a_{\mathfrak{D}})$ is the natural Galois action (c.f. Definition 5.3). \par 
In particular, we obtain an action of $G^m$ on $F_m(G^m)$. Now we check that this action preserves $F_m(G^m)$. Consider the isomorphism
$$
\tau_{\mathfrak{E},\mathfrak{D}}: F_m[\mathfrak{E}] \xrightarrow{\sim} F_m[\mathfrak{D}]
$$
(which is defined earlier), and let $G^m$ act on both sides by automorphism via the isomorphisms arising from Definition 5.3 (4). Then it follows from the definition of $\tau_{\mathfrak{E},\mathfrak{D}}$ and Theorem 1.2, that $\tau_{\mathfrak{E},\mathfrak{D}}$ is $G^m$-equivariant. Hence we can conclude that $G^m$ preserves $F_m(G^m)$ and induces automorphisms of $F_m(G^m)$. Moreover, one verifies immediately that the fixed subfield $F(G^m) := F_m(G^m)^{G^m}$ by this action is a number field, which is isomorphic to $K$. This verifies condition (1). \par 
Now we verify condition (2). Notice that there is an $\alpha_m$-equivariant isomorphism of rings:
$$
\tilde{\tau}:\prod_{\mathfrak{D} \in \widetilde{\text{Dec}}^{d=1}(G^m)} F_m[\mathfrak{D}] \xrightarrow{\sim} \prod_{\widetilde{v} \in \mathscr{P}_{K_m}^{\text{fin},d=1}} K_m[\tilde{v}],
$$
where $K_m[\tilde{v}]$ is defined to be the image of $K_m$ contained in $(K_m)_{\tilde{v}}$, obtained from the component-wise isomorphism $F_m[\mathfrak{D}] \xrightarrow{\sim} K_m[\tilde{v}]$ arising from 
$$
\text{Gal}(F_{m+3}[D]/F[D]) \xrightarrow{\sim} G_K^{m+3}
$$ (c.f. Definition 5.3 and Theorem 1.2) for some length $m+3$-standard type subfields that determines $F_m[\mathfrak{D}]$. Then it is immediate that the image of $F_m(G^m)$ via $\tilde{\tau}$ coincide with the image of $K_m$ in $\prod_{\tilde{v} \in \mathscr{P}_{K_m}^{\text{fin},d=1}} K_m[\tilde{v}]$ via the diagonal embedding. Hence we can conclude that there is an $\alpha_m$-equivariant isomorphism of fields $F_m(G^m)\xrightarrow{\sim} K_m$ and we have the following commutative diagram:
$$
\begin{tikzcd}
F_m(G^m) \arrow[r,"\sim"] & K_m \\
F(G^m) \arrow[u,hookrightarrow] \arrow[r,"\sim"] & K \arrow[u,hookrightarrow].
\end{tikzcd}
$$
Finally, the functoriality (with respect to the isomorphy type of $G$) of the assignment $G \mapsto F_m(G^m)$ follows from Theorem 1.2 together with the fact that the assignment $G \mapsto \widetilde{\text{Dec}}(G^{m+3})$ is functorial with respective to the isomorphy type of $G$ (c.f. Theorem 1.25 in \cite{ST1}).
\end{proof}
\begin{rem}
We have seen why we need to take $m\geq 3$ (c.f. Proposition 5.4, Lemma 5.5 and Proposition 5.6). Now we summaries where the $9$ extra steps are used in Theorem 5.7. \par 
(1) We need to group-theoretically reconstruct the local invariants for each decomposition group, c.f. Theorem 2.6, and also the Kummer containers, this requires 6 extra steps. \par 
(2) We need to apply the main statement of Theorem 1.2 (e.g. we have already used Theorem 1.2 several times in the proof of Theorem 1), this requires $3$ extra steps. 
\end{rem}
\section{Reconstruction of Imaginary Quadratic Fields}
In this section we will review Hoshi's Lemma on NF-monoids in \cite{Ho1} and hence give the proof of Theorem 2. \par 
Before we review Hoshi's construction of NF-monoids, we shall introduce a few notations. Let $K$ be a number field.
\begin{itemize}
\item Let $v \in \mathscr{P}_K^{\text{fin}}$, we write $\mathcal{O}_{(v)}$ for the localisation of the ring of integers $\mathcal{O}_K$ at the prime ideal corresponding to the place $v$.
\item We write $\text{ord}_v: K^{\times} \twoheadrightarrow \mathbb{Z}$ for the valuation map on $K^{\times}$ induced by $v$.
\item We write $\kappa(v)$ for the residue field of $\mathcal{O}_{(v)}$. 
\item We write $\mathcal{O}_{(v)}^{\prec} := \text{ker}(\mathcal{O}_{(v)}^{\times} \twoheadrightarrow \kappa(v)^{\times})$.
\end{itemize}
\begin{defn}[c.f. Definition 2.2 in \cite{Ho1}]
If $K$ is a number field, we call the collection
$$
(K_{\times},\mathcal{O}_K^{\rhd},\mathscr{P}_K^{\text{fin}},\{\mathcal{O}_{(v)}^{\prec}\}_{v \in \mathscr{P}_K^{\text{fin}}})
$$
the \textbf{NF-monoid associated to $K$}.
\end{defn}
\begin{defn}[c.f. Definition 2.3 in \cite{Ho1}]
Let $M$ be a multiplicative monoid, $O^{\rhd} \subset M$ is a submonoid, $S$ is a set and for each $s \in S$, there is a submonoid $O_s^{\prec} \subset M$ of M. We call the collection
$$
\mathcal{M} := (M,O^{\rhd},S,\{O^{\prec}_s\}_{s \in S})
$$
an \textbf{NF-monoid} if there is an NF-monoid associated to some number field $K$ that is isomorphic to $\mathcal{M}$. More precisely, there are isomorphisms of monoids $K_{\times} \xrightarrow{\sim} M$, $\mathcal{O}_K^{\rhd} \xrightarrow{\sim} O^{\rhd}$; a bijection of sets $\mathscr{P}_K^{\text{fin}} \xrightarrow{\sim} S$; and for each $v \in \mathscr{P}_K^{\text{fin}}$ and its image $s$ in $S$, there is an isomorphism of monoids $\mathcal{O}_{(v)}^{\prec} \xrightarrow{\sim} O^{\prec}_s$.
\end{defn}
\begin{lem}[c.f. Theorem 2.9 in \cite{Ho1}]
If $\mathcal{M} := (M,O^{\rhd},S,\{O^{\prec}_s\}_{s \in S})$ is an NF-monoid, then one can define an additive operation $+_{M}$ on $M$ such that $(M,+_{M})$ has a structure of a number field. In particular, if $(K_{\times},\mathcal{O}_K^{\rhd},\mathscr{P}_K^{\text{fin}},\{\mathcal{O}_{(v)}^{\prec}\}_{v \in \mathscr{P}_K^{\text{fin}}})$ is the NF-monoid associated to $K$ that is isomorphic to $\mathcal{M}$, then the isomorphism of monoids
$$
\phi: K_{\times} \xrightarrow{\sim} M
$$
induces an isomorphism of fields
$$
\phi^+: (K_{\times},+_{K_{\times}}) \xrightarrow{\sim} (M,+_M).
$$
\end{lem}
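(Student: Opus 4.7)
The plan is to reconstruct the field addition $+_M$ on $M$ from the purely multiplicative data of $\mathcal{M}$ by first extracting intrinsic local invariants attached to each $s \in S$ and then pinning down sums through a Hasse-style approximation argument; this is essentially the strategy of Theorem 2.9 in \cite{Ho1}, which we would adapt to the present formulation.

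First, for each $s \in S$ I would recover the valuation $\text{ord}_s \colon M \setminus \{0\} \twoheadrightarrow \mathbb{Z}$, the localisation $O_{(s)} \subset M$ containing $O^{\rhd}$, and the maximal ideal. The key input is that $O_s^{\prec} \subset O_{(s)}^{\times}$ is by construction the kernel of reduction modulo the maximal ideal, so the quotient $\kappa(s)^{\times} := O_{(s)}^{\times}/O_s^{\prec}$ is intrinsically a finite abelian group, and one sets $\kappa(s) := \kappa(s)^{\times} \sqcup \{0\}$. Because a finite field is determined up to unique isomorphism by its cardinality $q_s = |\kappa(s)^{\times}| + 1$, the field structure on each $\kappa(s)$ is canonically determined by $\mathcal{M}$ alone, circumventing the apparent need to know addition on $M$ before defining it. Additive inverses on $M$ are then handled by singling out $-1 \in M$ as the unique element of order $2$ in $(O^{\rhd})^{\times}$.

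Given the intrinsic reductions $\rho_s \colon O_{(s)} \twoheadrightarrow \kappa(s)$, the sum $a +_M b$ (when non-zero) is characterised as the unique $c \in M$ satisfying $\text{ord}_s(c) \geq \min(\text{ord}_s(a), \text{ord}_s(b))$ for all $s$, with equality forced at the $s$ where $\rho_s$ of the leading terms do not cancel, together with $\rho_s(c) = \rho_s(a) +_{\kappa(s)} \rho_s(b)$ at every $s$ where $a,b \in O_{(s)}$. Existence of such a $c$ is obtained by transporting the addition from the isomorphic NF-monoid of the reference number field $K$, while uniqueness reduces to strong approximation together with the product formula for the valuations $\{\text{ord}_s\}_{s \in S}$, both of which are encoded in the monoid data. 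The main obstacle is precisely the bootstrapping issue flagged above, namely having to use addition in the residue fields to define addition on $M$; once one accepts that finite-field addition is intrinsic to a finite multiplicative monoid of the form $\kappa^{\times} \sqcup \{0\}$, the remainder of the argument is formal. Finally, the isomorphism $\phi^+$ is tautological: each ingredient of the construction of $+_M$ depends only on the isomorphism class of $\mathcal{M}$, so the given $\phi$ automatically identifies the recipe producing $+_{K_{\times}}$ with the one producing $+_M$.
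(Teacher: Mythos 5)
The paper's own ``proof'' of this lemma is a one-line citation to Section 2 of \cite{Ho1}, so there is nothing in the paper to compare against line by line; the assessment below is therefore of the internal soundness of your argument.

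There is a genuine gap at the step you yourself identify as the crux. You assert that ``a finite field is determined up to unique isomorphism by its cardinality $q_s=|\kappa(s)^{\times}|+1$'' and conclude that the field addition on $\kappa(s)=\kappa(s)^{\times}\sqcup\{0\}$ is ``intrinsic to a finite multiplicative monoid of the form $\kappa^{\times}\sqcup\{0\}$.'' Both halves of this are false. A finite field $\mathbb{F}_q$ with $q=p^n$ is determined up to isomorphism by $q$, but its automorphism group is cyclic of order $n$, not trivial, so the isomorphism is not unique when $n>1$. More importantly, the multiplicative monoid $\mathbb{Z}/(q-1)\mathbb{Z}\sqcup\{0\}$ has automorphism group $(\mathbb{Z}/(q-1)\mathbb{Z})^{\times}$ of order $\phi(q-1)$, which is in general much larger than $n$; each coset of $\mathrm{Aut}(\mathbb{F}_q)$ in this group gives a genuinely different field addition on the same underlying set. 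Already for $q=5$ the monoid automorphism $x\mapsto x^{3}$ of $\mathbb{F}_5^{\times}\sqcup\{0\}$ swaps $2$ and $3$ while fixing $0,1,4$, and one checks that it does not preserve addition (it sends $1+1=2$ to $3$, but $1+1$ should still be $2$). Thus the field structure on $\kappa(s)$ is not canonically recoverable from the abstract quotient monoid $O_{(s)}^{\times}/O_s^{\prec}\sqcup\{0\}$ alone, and your subsequent characterisation of $a+_M b$ via ``$\rho_s(c)=\rho_s(a)+_{\kappa(s)}\rho_s(b)$'' is not a well-posed condition.

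This is exactly the bootstrapping obstruction you flag, and it is the hard part of Hoshi's Theorem 2.9: the residue field additions must be reconstructed using the global structure of $M$ (e.g.\ the interplay between $O^{\rhd}$, the valuations $\mathrm{ord}_s$, and the submonoids $O_s^{\prec}$ across all $s$ simultaneously), not from each local residue monoid in isolation. Until that is supplied, the existence and well-definedness of the putative $c$ cannot be established, so the proposal does not prove the lemma. Two secondary points: the phrase ``where $\rho_s$ of the leading terms do not cancel'' already presupposes an addition; and the appeal to ``the product formula for the valuations $\{\mathrm{ord}_s\}_{s\in S}$'' is suspect because $S$ only indexes the non-archimedean places, whereas the product formula requires the archimedean places as well (strong approximation does avoid one place, but it is not a substitute for the product formula in the way you invoke it).
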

\begin{proof}
See Section 2 in \cite{Ho1}.
\end{proof}
\begin{lem}
Let $K$ be a number field, and let $\alpha:G \xrightarrow{\sim} G_K^6$ be a profinite group of $\text{GSC}^6$-type. Then the followings hold:  \par 
(i) There is a group-theoretic reconstruction of the local cyclotome $\Lambda(D^1)$ of $D \in \widetilde{\text{Dec}}(G^{4})$ starting from $D$. \par 
(ii) There is a group-theoretic reconstruction of the global cyclotome $\Lambda(G^1)$ starting from $G.$ \par 
(iii) There is a group-theoretic reconstruction of the Kummer containers $\mathcal{H}^{\times}(G^1)$ and $\mathcal{H}_{\times}(G^1)$ starting from $G$.
\end{lem}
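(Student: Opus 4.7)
The plan is to observe that this lemma is essentially a compilation of the group-theoretic reconstructions already carried out in Sections 2 and 3, and the proof consists of carefully pointing to the relevant constructions and verifying that they apply under the $\text{GSC}^6$-type hypothesis.

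For assertion (i), I would refer directly to Theorem 2.6 (ix). The key point is that for $D \in \widetilde{\text{Dec}}(G^4)$, the group $D$ itself inherits enough structure (being identified with $D_{\mathfrak{p}_4} = \text{Gal}((K_{\mathfrak{p}})_4/K_{\mathfrak{p}})$) to apply the sequence of reconstructions (i)--(ix) of Theorem 2.6. In particular, $\Lambda(D^1) = \varprojlim_n \mu(D)[n]$ is defined purely in terms of $D$, using the multiplicative groups $k^\times(\widetilde H^1)$ attached to open subgroups of $D^1$, and so it depends only on $D$ (not on the ambient $G$).

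For assertion (ii), I would invoke Definition 3.5. The construction of $\mu(G^1) = \varinjlim_H \text{ker}(\mathbb{I}^{\text{fin}}(H)\to \widetilde H^1)_{\text{tor}}$ requires, for each open subgroup $H\subset G^1$ with preimage $\widetilde H$ in $G$, the local data $k^\times(v)$ and $\mathcal O^\times(v)$ attached to $v\in \text{Dec}(\widetilde H^1)$ via Definition 3.1. By Remark 2.1 (ii), $\widetilde H^5$ is of $\text{GSC}^5$-type since $G$ is of $\text{GSC}^6$-type, so Definition 3.2 applies to produce $\mathbb{I}^{\text{fin}}(\widetilde H^1)$, and the reciprocity map $\mathbb{I}^{\text{fin}}(\widetilde H^1)\to \widetilde H^1$ is built from the inclusions $\mathcal O^\times(v)\hookrightarrow k^\times(v)\hookrightarrow D^1$ produced in Theorem 2.6. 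Taking the inverse limit then gives $\Lambda(G^1) = \varprojlim_n \mu(G^1)[n]$, entirely from the data of $G$.

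For assertion (iii), I would combine (i) and (ii) with the local--global synchronisation of Theorem 3.7 and apply Definition 3.10. Concretely, Theorem 3.7 provides a $D$-equivariant isomorphism $\Lambda(G^1)\xrightarrow{\sim}\Lambda(D^1)$ for each $D\in\widetilde{\text{Dec}}(G^4)$; this is what allows the injective comparison map
\[
\dagger : H^1(G^4,\Lambda(G^1)) \hookrightarrow \prod_{v\in\text{Dec}(G^1)} H^1(v,\Lambda(v))
\]
to be constructed group-theoretically. Then $\mathcal{H}^{\times}(G^1)$ is defined as the preimage under $\dagger$ of the image of $\mathbb{I}^{\text{fin}}(G^1)$ via the componentwise Kummer maps, and $\mathcal H_{\times}(G^1) := \mathcal H^{\times}(G^1)\cup\{0\}$. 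Each object in this construction (the cyclotomes, the idele group, the Kummer maps, the cohomology group $H^1(G^4,\Lambda(G^1))$, and the local cohomology groups $H^1(v,\Lambda(v))$) has already been produced group-theoretically from $G$ (of $\text{GSC}^6$-type), so the whole construction goes through.

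There is no genuine obstacle here; the content of the lemma is a repackaging. The only point requiring a small amount of care is to confirm that the $\text{GSC}^6$-hypothesis is sufficient in all three parts --- in particular that the inputs of Theorem 2.6 (which requires $\text{GSC}^6$-type) and Theorem 3.7 (same hypothesis) are satisfied, and that Definition 3.10 (which is stated for $\text{GSC}^6$-type) applies verbatim. Remark 3.11 in fact already records that $\mathcal{H}^{\times}(G^1)$ is reconstructible from $\text{GSC}^6$-type input, so the argument largely consists of making this traceable statement explicit.
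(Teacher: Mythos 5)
Your proposal is correct and matches the paper's proof, which simply cites Theorem 2.6(ix), Definition 3.5, and Definition 3.10 for assertions (i), (ii), and (iii) respectively; you unpack the same references with a bit more justification (including the $\text{GSC}^5$-type step via the remark after Definition 2.1), but the route is identical.
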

\begin{proof}
Assertion (i) follows immediately from Theorem 2.6 (ix). Assertion (ii) follows immediately from Definition 3.5. Assertion (iii) follows immediately from Definition 3.10.
\end{proof}
\begin{lem}
We have the following commutative diagram with exact rows
$$
\begin{tikzcd}
1 \arrow[r] & \mathcal{O}_K^{\times} \arrow[r] \arrow[d,hookrightarrow] & K^{\times} \arrow[r] \arrow[d,hookrightarrow] & K^{\times}/O_K^{\times} \arrow[r] \arrow[d,equal] & 1 \\
1 \arrow[r] & \widehat{\mathcal{O}_K^{\times}} \arrow[r] & \mathcal{H}^{\times}(K) \arrow[r] & K^{\times}/O_K^{\times} \arrow[r] & 1.
\end{tikzcd}
$$
In particular, if $\mathcal{O}_K^{\times}$ is finite, i.e. $K$ is $\mathbb{Q}$ or an imaginary quadratic field, then $K^{\times} \hookrightarrow \mathcal{H}^{\times}(K)$ is an isomorphism, and $K_{\times} \hookrightarrow \mathcal{H}_{\times}(K)$ is an isomorphism of monoids.
\end{lem}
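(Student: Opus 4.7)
The plan is to verify the two rows are exact with the vertical maps as described, and then deduce the ``in particular'' assertion via the five lemma. The top row is tautological, so the main task is to construct the bottom row and check exactness and commutativity.

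For the left vertical inclusion $\widehat{\mathcal{O}_K^{\times}} \hookrightarrow \mathcal{H}^{\times}(K)$, I would use the universal property of the fibre product defining $\mathcal{H}^{\times}(K)$. Every $u \in \mathcal{O}_K^{\times}$ is a local unit at each finite prime, so it sits diagonally in $\prod_v \mathcal{O}_v^{\times} \subset \mathbb{I}_K^{\text{fin}}$, and it also sits in $K^{\times} \hookrightarrow (K^{\times})^{\wedge}$; these two images agree in $\prod_{\mathfrak{p}}(K_{\mathfrak{p}}^{\times})^{\wedge}$. Since $\prod_v \mathcal{O}_v^{\times}$ is already profinite, passing to profinite completions produces compatible maps $\widehat{\mathcal{O}_K^{\times}} \to (K^{\times})^{\wedge}$ and $\widehat{\mathcal{O}_K^{\times}} \to \mathbb{I}_K^{\text{fin}}$, and hence a map $\widehat{\mathcal{O}_K^{\times}} \to \mathcal{H}^{\times}(K)$. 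Injectivity follows from that of $\mathcal{O}_K^{\times} \hookrightarrow \prod_v \mathcal{O}_v^{\times}$ together with the profiniteness of the target.

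The right-hand map $\mathcal{H}^{\times}(K) \to K^{\times}/\mathcal{O}_K^{\times}$ would be defined by sending $(a,(x_v))$ to the class of a generator of the divisor $D := \sum_v \text{ord}_v(x_v)[v] \in \text{Div}(K)$, which has finite support. The key point is that $D$ is principal, which I expect to rest essentially on the finiteness of $\text{Cl}(K)$: for each $n \geq 1$, a lift $b_n \in K^{\times}$ of $a \pmod{(K^{\times})^n}$ satisfies $\text{ord}_v(b_n) \equiv \text{ord}_v(x_v) \pmod{n}$ for every $v$ (since the images of $a$ and $(x_v)$ in $\prod_v (K_v^{\times})^{\wedge}$ coincide), so $[D] = [D-\text{div}(b_n)]$ lies in $n\cdot\text{Cl}(K)$ for every $n$; as $\text{Cl}(K)$ is finite, $\bigcap_n n\cdot\text{Cl}(K) = 0$ forces $[D] = 0$. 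The corresponding $b$ is then unique up to $\mathcal{O}_K^{\times}$, making the map well defined. Surjectivity is automatic from the commutativity of the right square and the surjection $K^{\times} \twoheadrightarrow K^{\times}/\mathcal{O}_K^{\times}$.

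Exactness at $\mathcal{H}^{\times}(K)$ amounts to identifying the kernel of this map, namely $(K^{\times})^{\wedge} \cap \prod_v \mathcal{O}_v^{\times}$ inside $\prod_v (K_v^{\times})^{\wedge}$, with $\widehat{\mathcal{O}_K^{\times}}$; a parallel class-group argument gives this. Commutativity of both squares is then immediate from the constructions. Finally, when $\mathcal{O}_K^{\times}$ is finite (i.e.\ $K = \mathbb{Q}$ or an imaginary quadratic field), the natural map $\mathcal{O}_K^{\times} \to \widehat{\mathcal{O}_K^{\times}}$ is an isomorphism, and applying the five lemma to the diagram, with isomorphisms on both outer columns, yields $K^{\times} \xrightarrow{\sim} \mathcal{H}^{\times}(K)$. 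Adjoining $0$ to both sides produces the asserted monoid isomorphism $K_{\times} \xrightarrow{\sim} \mathcal{H}_{\times}(K)$. The principal obstacle in the argument is the principality of $D$ (and the parallel kernel identification); both reduce to the finiteness of $\text{Cl}(K)$, while the rest is a formal diagram chase.
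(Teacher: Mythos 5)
The paper does not give its own proof of this lemma --- it simply cites Lemma 3.10 of \cite{Ho1} --- so the comparison is really with Hoshi's argument, which your proposal reconstructs in the same spirit: build the bottom row by hand via a class-group argument, then apply the (short) five lemma. Your argument for the well-definedness of $\mathcal{H}^{\times}(K)\to K^{\times}/\mathcal{O}_K^{\times}$ is correct and is the right idea: the divisor $D$ attached to the idelic component lies in $n\cdot\mathrm{Cl}(K)$ for all $n$ because of the compatibility between the $a$-component and the $(x_v)$-component modulo $n$th powers, and finiteness of $\mathrm{Cl}(K)$ forces $[D]=0$.

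There are two places, however, where the justification you offer is not adequate. First, the injectivity of $\widehat{\mathcal{O}_K^{\times}}\to\mathcal{H}^{\times}(K)$ does \emph{not} follow from ``injectivity of $\mathcal{O}_K^{\times}\hookrightarrow\prod_v\mathcal{O}_v^{\times}$ together with the profiniteness of the target'': that principle is false in general (e.g.\ $\mathbb{Z}\hookrightarrow\mathbb{Z}_p$ is injective with profinite target, yet $\widehat{\mathbb{Z}}\to\mathbb{Z}_p$ has a large kernel). What actually makes the map injective is that $K^{\times}/\mathcal{O}_K^{\times}$ is torsion-free (being a subgroup of the free abelian group $\mathrm{Div}(K)$), so $\mathcal{O}_K^{\times}\cap(K^{\times})^{n}=(\mathcal{O}_K^{\times})^{n}$ for every $n$; hence $\mathcal{O}_K^{\times}/(\mathcal{O}_K^{\times})^{n}\hookrightarrow K^{\times}/(K^{\times})^{n}$ for every $n$, and passing to the inverse limit gives $\widehat{\mathcal{O}_K^{\times}}\hookrightarrow(K^{\times})^{\wedge}$, which composed with the projection from the fibre product is the map in question. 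Second, exactness at $\mathcal{H}^{\times}(K)$ is stated as ``a parallel class-group argument,'' but this is where the real content of the lemma lies, and it uses \emph{both} ingredients: given $a\in(K^{\times})^{\wedge}$ landing in $\prod_v\mathcal{O}_v^{\times}$, one lifts $a$ modulo $(K^{\times})^{nh}$ (with $h=|\mathrm{Cl}(K)|$) to some $b\in K^{\times}$, observes $\mathrm{div}(b)=n\cdot(hE)$ with $hE$ principal by finiteness of $\mathrm{Cl}(K)$, deduces $b\in\mathcal{O}_K^{\times}\cdot(K^{\times})^{n}$, and then needs the identity $\mathcal{O}_K^{\times}\cap(K^{\times})^{n}=(\mathcal{O}_K^{\times})^{n}$ above to conclude $a\in\varprojlim_n\mathcal{O}_K^{\times}/(\mathcal{O}_K^{\times})^{n}=\widehat{\mathcal{O}_K^{\times}}$. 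So the architecture of your proof is sound and the principality argument is correct, but you should replace the profiniteness appeal with the torsion-freeness of $K^{\times}/\mathcal{O}_K^{\times}$, and spell out the kernel identification rather than declaring it parallel.
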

\begin{proof}
See Lemma 3.10 in \cite{Ho1}.
\end{proof}
In particular, we can reconstruct the multiplicative group $F^{\times}(G) := \mathcal{H}^{\times}(G)$ and hence the monoid $F_{\times}(G) := \mathcal{H}_{\times}(G)$ by using Lemma 6.4.
\begin{Thm}
There is a group-theoretic reconstruction, starting from $G$, of an NF-monoid
$$
\mathcal{M}(G) := (M,O^{\rhd},S,\{O_s^{\prec}\}_{s \in S})
$$
that is isomorphic to 
$$
(K_{\times},\mathcal{O}^{\rhd}_K,\mathscr{P}_K^{\text{fin}}, \{O_{(v)}^{\prec}\}_{v \in \mathscr{P}_K^{\text{fin}}}).
$$
In particular, by Lemma 6.3, we can construct a field $(M,+_M,\times_M)$ that is isomorphic to $K$.
\end{Thm}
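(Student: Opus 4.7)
The plan is to assemble each of the four pieces of $\mathcal{M}(G)$ out of constructions already carried out in Sections 2--3, then verify compatibility with the NF-monoid of $K$ component by component. By hypothesis $K$ is $\mathbb{Q}$ or imaginary quadratic, so Lemma 6.5 is available, which will be the key bridge between the Kummer container and $K^{\times}$ itself.

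First, for the underlying monoid, I would set $M := F_{\times}(G) = \mathcal{H}_{\times}(G^1)$, using Lemma 6.4 (iii) for its reconstruction and Lemma 6.5 for the identification $M \xrightarrow{\sim} K_{\times}$. For the indexing set of primes, I would take $S := \text{Dec}(G^1)$ as in Definition 2.3, which bijects with $\mathscr{P}_K^{\text{fin}}$ by Remark 2.4 and Proposition 1.5 of \cite{ST1}. For each $v \in S$, the local unit monoid $\mathcal{O}^{\times}(v)$ has already been reconstructed in Definition 3.1 as the image of the inertia subgroup $I(D^2)$ inside $D^1$ for any $D$ above $v$, and this corresponds to $\mathcal{O}_v^{\times}$.

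Next, I would define $O_s^{\prec}$ for each $s = v$ as follows. The natural projection $\mathcal{H}^{\times}(G^1) \twoheadrightarrow k^{\times}(v)$ (component $v$ of the composite through $\mathbb{I}^{\text{fin}}(G^1)$) lands in $\mathcal{O}^{\times}(v) \subset k^{\times}(v)$ on a well-defined subgroup, and the principal units $1 + \mathfrak{m}_v$ are group-theoretically the unique pro-$p_v$-Sylow subgroup of $\mathcal{O}^{\times}(v)$, since $\mathcal{O}_v^{\times} \cong \mu_{q_v-1} \times (1+\mathfrak{m}_v)$ with the torsion prime-to-$p_v$ part being the Teichm\"uller lift of $\kappa(v)^{\times}$. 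The residue characteristic $p_v = p_D$ is reconstructed in Theorem 2.6 (i). I then let $O_s^{\prec} \subset M$ be the preimage of this pro-$p_v$-Sylow subgroup under the map $\mathcal{H}_{\times}(G^1) \to k_{\times}(v)$, adjoining $0$. Finally, for $O^{\rhd}$, I would use that the structure sheaf of $K$ is cut out by non-negativity of valuations: the short exact sequence
$$
1 \to \mathcal{O}^{\times}(v) \to k^{\times}(v) \xrightarrow{\text{ord}_v} \mathbb{Z} \to 1
$$
from the local class field theory diagram in the proof of Theorem 2.6 (viii) provides a group-theoretic valuation $\text{ord}_v: k^{\times}(v) \twoheadrightarrow \mathbb{Z}$, and I define $O^{\rhd} \subset M$ as the submonoid of those $a \in \mathcal{H}_{\times}(G^1)$ whose image in $k^{\times}(v)$ satisfies $\text{ord}_v(a) \geq 0$ for every $v \in S$ (with $0$ adjoined).

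The verification that $\mathcal{M}(G)$ so defined is isomorphic to the NF-monoid of $K$ reduces to checking that the isomorphism $\mathcal{H}_{\times}(G^1) \xrightarrow{\sim} \mathcal{H}_{\times}(K)$ of Proposition 3.12, combined with Lemma 6.5, carries $O^{\rhd}$ to $\mathcal{O}_K^{\rhd}$, the bijection $S \xrightarrow{\sim} \mathscr{P}_K^{\text{fin}}$ to the natural one, and $O_s^{\prec}$ to $\mathcal{O}_{(v)}^{\prec}$. The first two compatibilities are built into the definitions via the component-wise description of the Kummer container, while the third follows from the classical identification of $1+\mathfrak{m}_v$ with the pro-$p_v$-part of $\mathcal{O}_v^{\times}$. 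I expect the main technical point, and the only non-formal step, to be confirming that the submonoid $O^{\rhd}$ defined by valuation conditions on $\mathcal{H}^{\times}(G^1)$ exactly recovers $\mathcal{O}_K^{\rhd}$ under the isomorphism of Lemma 6.5 --- i.e.\ that the diagonal map $K^{\times} \hookrightarrow \mathcal{H}^{\times}(K)$ identifies integrality in $K$ with simultaneous non-negativity of local valuations, which is exactly the defining property of $\mathcal{O}_K$. With the NF-monoid in hand, Lemma 6.3 produces the additive structure and completes the field reconstruction.
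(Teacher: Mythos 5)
Your proposal follows essentially the same route as the paper's proof: take $M:=\mathcal{H}_{\times}(G^1)$, $S:=\mathrm{Dec}(G^1)$, cut out $O^{\rhd}$ by simultaneous non-negativity of the local valuations $\mathrm{ord}_v$ (the paper phrases this as $a\in D^1\times_{D^{\mathrm{unr}}}\mathrm{Frob}(D)^{\mathbb{Z}_{\geq 0}}$, which is the same condition), and define $O_s^{\prec}$ via the pro-$p_v$ part of $\mathcal{O}^{\times}(v)$ (the paper equivalently takes the kernel of the map to $\mathcal{O}^{\times}(v)^{(p_v')}$). One small slip: you write that $O^{\rhd}$ and $O_s^{\prec}$ should have $0$ adjoined, but by the paper's conventions $\mathcal{O}_K^{\rhd}=\mathcal{O}_K\setminus\{0\}$ and $\mathcal{O}_{(v)}^{\prec}\subset\mathcal{O}_{(v)}^{\times}$, so neither contains $0$; this does not affect the substance of the construction.
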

\begin{proof}
We adapt Proposition 5.3 in \cite{Ho1} in our case. Firstly, we set $M := F_{\times}(G)$. By the discussions above Theorem 6.6, $M$ is isomorphic to $K_{\times}$ as monoids. \par 
Then we construct $O^{\rhd}$ as follows: for each $D \in \widetilde{\text{Dec}}(G^4)$ write $\mathcal{O}^{\rhd}(v)$ for the monoid $D^1 \times_{D^{\text{unr}}} \text{Frob}(D)^{\mathbb{Z}_{\geq 0}}$ where $v \in \text{Dec}(G^1)$ is the unique element corresponding to $D$. One has the following isomorphism
$$
\mathcal{O}_{\mathfrak{p}}^{\rhd} \xrightarrow{\sim} \mathcal{O}^{\rhd}(v)
$$
$$
u \pi^n \mapsto \alpha \text{Frob}(D)^n
$$
where $\mathfrak{p} \in \mathscr{P}_K^{\text{fin}}$ is the unique non-archimedean prime of $K$ corresponding to $v$, $u \in \mathcal{O}_{\mathfrak{p}}^{\times}$ is a unit, $\pi$ is a uniformiser in $K_{\mathfrak{p}}$, $\alpha \in \mathcal{O}^{\times}(v)$ and $n \geq 0$ is a non-negative integer. We write 
$$
\mathcal{O}^{\rhd}(G) := \{a \in F_{\times}(G): a \in \mathcal{O}^{\rhd}(v)~\forall~v \in \text{Dec}(G^1)\}.
$$ 
We write $O^{\rhd} := \mathcal{O}^{\rhd}(G)$ and $S := \text{Dec}(G^1)$. \par 
Finally, for any $v \in \text{Dec}(G^1)$, we write
$$
\kappa(v)^{\times} := \mathcal{O}^{\times}(v)^{(p_v')}
$$
which is isomorphic to the multiplicative group of the residue field of $\mathcal{O}_{\mathfrak{p}}$ hence is also isomorphic to the multiplicative group of the residue field of $\mathcal{O}_{(\mathfrak{p})}$ where $\mathfrak{p} \in \mathscr{P}_K^{\text{fin}}$ corresponding to $v$. \par 
We write $\mathcal{O}_{(v)}(G)^{\times} := \text{ker}[F^{\times}(G) \hookrightarrow k^{\times}(v) \twoheadrightarrow k^{\times}(v)/\mathcal{O}^{\times}(v)]$. Notice that $k^{\times}(v) \twoheadrightarrow k^{\times}(v)/\mathcal{O}^{\times}(v)$ coincides with the valuation map on $k^{\times}(v)$. Since $F^{\times}(G)$ is dense in $k^{\times}(v)$, the composite $F^{\times}(G) \hookrightarrow k^{\times}(v) \twoheadrightarrow k^{\times}(v)/\mathcal{O}^{\times}(v)$ is surjective. Hence the composite $F^{\times} \hookrightarrow k^{\times}(v) \twoheadrightarrow k^{\times}(v)/\mathcal{O}^{\times}(v)$ coincide with the $v$-adic valuation map on $F^{\times}(G)$ and $\mathcal{O}_{(\mathfrak{p})}^{\times} \xrightarrow{\sim} \mathcal{O}_{(v)}^{\times}(G)$. We write
$$
\mathcal{O}_{(v)}(G)^{\prec} := \text{ker}(\mathcal{O}_{(v)}^{\times}(G) \twoheadrightarrow \kappa(v)^{\times})
$$
and we write $O_s^{\prec} := \mathcal{O}_{(v)}(G)^{\prec}$. \par 
Therefore, $\mathcal{M}(G) := (F_{\times}(G),\mathcal{O}^{\rhd}(G),\text{Dec}(G^1),\{\mathcal{O}_{(v)}(G)^{\prec}\}_{v \in \text{Dec}(G^1)})$ is an NF-monoid which is isomorphic to $(K_{\times},\mathcal{O}_K^{\rhd},\mathscr{P}_K^{\text{fin}},\{\mathcal{O}_{(\mathfrak{p})}^{\prec}\}_{\mathfrak{p} \in \mathscr{P}_K^{\text{fin}}})$. Hence by Lemma 6.3, we obtain a field $(F_{\times}(G),+_{F_{\times}(G)},\times_{F_{\times}(G)})$ that is isomorphic to $K$.
\end{proof}
\begin{cor}[Theorem 2]
There is a group-theoretic reconstruction of the field $F(G)$ from $G$ such that
$$
F(G) \xrightarrow{\sim} K.
$$
\end{cor}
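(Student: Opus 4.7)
The plan is to deduce the corollary directly by combining Theorem 6.6 with the special feature of $\mathbb{Q}$ and imaginary quadratic fields highlighted in Lemma 6.5. The point is that Theorem 6.6 already does essentially all the work: it reconstructs group-theoretically an NF-monoid $\mathcal{M}(G)$ together with a field structure $(F_{\times}(G), +_{F_{\times}(G)}, \times_{F_{\times}(G)})$ on its underlying set via Lemma 6.3. What remains is to verify that under our hypothesis on $K$ this reconstructed field is indeed isomorphic to $K$ itself, not merely to some Kummer-theoretic completion of it.

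Concretely, I would first invoke Theorem 6.6 to produce the NF-monoid
\[
\mathcal{M}(G) = (F_{\times}(G), \mathcal{O}^{\rhd}(G), \mathrm{Dec}(G^1), \{\mathcal{O}_{(v)}(G)^{\prec}\}_{v \in \mathrm{Dec}(G^1)})
\]
and note that by construction $F_{\times}(G) := \mathcal{H}_{\times}(G^1)$. Next, I would apply Lemma 6.3 to equip $F_{\times}(G)$ with an additive operation $+_{F_{\times}(G)}$ making $(F_{\times}(G), +_{F_{\times}(G)}, \times_{F_{\times}(G)})$ into a field; by the assertion of Lemma 6.3, the canonical monoid isomorphism $\mathcal{H}_{\times}(K) \xrightarrow{\sim} F_{\times}(G)$ (which arises from the isomorphisms of Lemma 6.4 together with the monoid-theoretic identifications of Theorem 6.6) upgrades to a field isomorphism from $\mathcal{H}_{\times}(K)$ equipped with its induced field structure to $F_{\times}(G)$.

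The final step is the key use of the hypothesis on $K$: since $K$ is either $\mathbb{Q}$ or imaginary quadratic, the unit group $\mathcal{O}_K^{\times}$ is finite, so Lemma 6.5 gives that the natural map $K_{\times} \hookrightarrow \mathcal{H}_{\times}(K)$ is an isomorphism of monoids. The additive structure on $\mathcal{H}_{\times}(K)$ supplied by Lemma 6.3 necessarily coincides with the one induced from the field structure on $K$ (because the NF-monoid of $K$ uniquely determines its additive structure — this is precisely the content of Theorem 2.9 in \cite{Ho1} cited as Lemma 6.3). Setting $F(G) := (F_{\times}(G), +_{F_{\times}(G)}, \times_{F_{\times}(G)})$, one concludes
\[
F(G) \xrightarrow{\sim} \mathcal{H}_{\times}(K) \xleftarrow{\sim} K_{\times} = K.
\]

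There is no real obstacle here beyond organizing the invocations: all hard inputs (reconstruction of local invariants, cyclotomes, Kummer containers, and the NF-monoid) have been established in the earlier sections, and the use of the finiteness of $\mathcal{O}_K^{\times}$ via Lemma 6.5 is precisely what bridges the gap between $\mathcal{H}_{\times}(K)$ and $K_{\times}$. Thus the corollary follows as a formal consequence.
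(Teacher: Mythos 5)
Your proposal is correct and follows essentially the same route as the paper: set $F(G) := (F_{\times}(G), +_{F_{\times}(G)}, \times_{F_{\times}(G)})$ and invoke Theorem 6.6 together with Lemma 6.3. Your additional unpacking of Lemma 6.5 (the finiteness of $\mathcal{O}_K^\times$ forcing $K_{\times} \xrightarrow{\sim} \mathcal{H}_{\times}(K)$) is not a different argument but an explicit rendering of an ingredient that Theorem 6.6's own proof already relies on when asserting $F_{\times}(G) \cong K_{\times}$ as monoids; the paper simply leaves this implicit by citing Theorem 6.6 as a black box.
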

\begin{proof}
We just set $F(G) := (F_{\times}(G),+_{F_{\times}(G)},\times_{F_{\times}}(G))$, and the result follows immediately from Theorem 6.6.
\end{proof}
\begin{rem}
We need to work with $G \xrightarrow{\sim} G_K^6$ because we can only group-theoretically reconstruct $\mathcal{H}^{\times}(G^1)$ starting from $G \xrightarrow{\sim} G_K^6$ c.f. Definition 3.10.
\end{rem}
\bibliography{ref.bib}

@article{Ho1,
  author = "Hoshi, Yuichiro",
  title = "Mono-anabelian Reconstruction of Number Fields (On the examination and further development of inter-universal {T}eichm{\"u}ller theory)",
  journal = "RIMS K\^oky\^uroku Bessatsu",
  volume = "B76",
  year = "2019",
  url = "http://hdl.handle.net/2433/244782",
}

@article{Ho2,
  author = "Hoshi, Yuichiro",
  title = "Mono-anabelian Reconstruction of Solvably Closed {Galois} Extension of Number Fields",
  journal = "Journal of Mathematical Sciences, The University of Tokyo",
  volume = "29",
  year = "2022, No.3",
  url = "https://www.ms.u-tokyo.ac.jp/journal/abstract/jms290301.html",
}

@article{JR,
title = {On the characterization of local fields by their absolute Galois groups},
journal = {Journal of Number Theory},
volume = {11},
number = {1},
pages = {1-13},
year = {1979},
issn = {0022-314X},
url = {https://www.sciencedirect.com/science/article/pii/0022314X79900143},
author = {Jarden, Moshe and Ritter, J\"urgen},
}

@article{Mzk1,
author = {Mochizuki, Shinichi},
title = {A Version of the {Grothendieck} Conjecture for p-Adic Local Fields},
journal = {International Journal of Mathematics},
volume = {08},
number = {04},
pages = {499-506},
year = {1997},
URL = {https://doi.org/10.1142/S0129167X97000251},
}

@book{ANT,
   author = {Neukirch, J\"urgen},
   year = {1999},
   title = {Algebraic Number Theory},
   publisher = {Springer},
   address = {Berlin},
}

@book{NSW,
   author = {Neukirch, J\"urgen and Schmidt, Alexander and Wingberg, Kay},
   year = {2008},
   title = {Cohomology of Number Fields},
   publisher = {Springer},
   address = {Berlin},
}

@book{RZ,
   author = {Ribes, Luis and Zalesskii, Pavel},
   year = {2010},
   title = {Profinite Groups},
   publisher = {Springer},
   address = {Berlin},
}

@article{ST1,
url = {https://doi.org/10.1515/crelle-2022-0025},
title = {The m-step Solvable Anabelian Geometry of Number Fields},
author = {Sa{\"i}di, Mohamed and Tamagawa, Akio},
pages = {153--186},
volume = {2022},
number = {789},
journal = {Journal für die reine und angewandte Mathematik (Crelles Journal)},
year = {2022}
}

@article{Uchida1,
author = {Uchida, K{\^o}ji},
title = {{Isomorphisms of Galois groups of solvably closed Galois extensions}},
volume = {31},
journal = {Tohoku Mathematical Journal},
number = {3},
publisher = {Tohoku University, Mathematical Institute},
pages = {359 -- 362},
year = {1979},
URL = {https://doi.org/10.2748/tmj/1178229803},
}
\bibliographystyle{plain}

\end{document}